\newtheorem{theorem}{Theorem}[section]
\newtheorem{lemma}[theorem]{Lemma}
\newtheorem{corollary}[theorem]{Corollary}
\newtheorem{proposition}[theorem]{Proposition}
\newtheorem{assumption}[theorem]{Assumption}
\theoremstyle{definition}
\newtheorem{definition}[theorem]{Definition}
\numberwithin{figure}{section}
\numberwithin{table}{section}
\numberwithin{equation}{section}
\numberwithin{theorem}{section}
\newenvironment{abstr}[1]{ \vspace{.05in}\footnotesize
       \parindent .2in
         {\upshape\bfseries #1. }\ignorespaces}{\par\vspace{.1in}}
\newenvironment{Abstract}{\begin{abstr}{Abstract}}{\end{abstr}}
\newenvironment{keywords}{\begin{abstr}{Key words}}{\end{abstr}}
\newenvironment{AMS}{\begin{abstr}{AMS subject classifications}}{\end{abstr}}
\DeclareMathOperator{\diam}{diam}
\DeclareMathOperator{\eff}{eff}
\DeclareMathOperator{\Id}{Id}
\DeclareMathOperator{\stab}{stab}
\DeclareMathOperator{\reg}{reg}
\DeclareMathOperator{\appr}{appr}
\DeclareMathOperator{\HMM}{HMM}
\renewcommand{\Re}{\operatorname{Re}}
\renewcommand{\Im}{\operatorname{Im}}
\newcommand{\halb}{\frac 12}
\DeclareMathOperator*{\wto}{\rightharpoonup}
\newcommand{\twosc}{\stackrel{2}{\wto}}
\newcommand{\nz}{\mathbb{N}}       
\newcommand{\gz}{\mathbb{Z}}       
\newcommand{\rz}{\mathbb{R}}       
\newcommand{\cz}{\mathbb{C}}       
\newcommand{\pz}{\mathbb{P}}
\newcommand{\de}{\delta}
\newcommand{\ep}{\varepsilon}
\newcommand{\om}{\omega}
\newcommand{\Om}{\Omega}
\newcommand{\si}{\sigma}
\newcommand\Ve{\mathbf{e}}
\newcommand\Vv{\mathbf{v}}
\newcommand\Vu{\mathbf{u}}
\newcommand\Vz{\mathbf{z}}
\newcommand\VH{\mathbf{H}}
\newcommand\VV{\mathbf{V}}
\newcommand\Vpsi{\boldsymbol{\psi}}
\newcommand\CB{\mathcal{B}}
\newcommand\CH{\mathcal{H}}
\newcommand\CT{\mathcal{T}}
\title{A new Heterogeneous Multiscale Method for the Helmholtz equation with high contrast%
\thanks{This work was supported by the Deutsche Forschungsgemeinschaft (DFG) in the project ``OH 98/6-1: Wellenausbreitung in periodischen Strukturen und Mechanismen negativer Brechung''} 
}
\author{Mario Ohlberger%
\thanks{Angewandte Mathematik: Institut f\"ur Analysis und Numerik, Westf\"alische Wilhelms-Uni\-ver\-si\-t\"at M\"unster, D-48149 M\"unster
}
\and Barbara Verf\"urth\footnotemark[2]}
\date{}
\begin{document}

\maketitle

\begin{Abstract}
In this paper, we suggest a new Heterogeneous Multiscale Method (HMM) for the Helmholtz equation with high contrast. 
The method is constructed for a setting as in Bouchitt{\'e} and Felbacq ({\itshape C.R.\ Math.\ Acad.\ Sci.\ Paris} 339(5):377--382, 2004), where the high contrast in the parameter leads to unusual effective parameters in the homogenized equation. 
We revisit existing homogenization approaches for this special setting and analyze the stability of the two-scale solution with respect to the wavenumber and the data. 
This includes a new stability result for solutions to the Helmholtz equation with discontinuous diffusion matrix.
The HMM is defined as direct discretization of the two-scale limit equation.
 With this approach we are able to show quasi-optimality and an a priori error estimate under a resolution condition that inherits its dependence on the wavenumber from the stability constant for the analytical problem. 
Numerical experiments confirm our theoretical convergence results and examine the resolution condition.
Moreover, the numerical simulation gives a good insight and explanation of the physical phenomenon of frequency band gaps.
\end{Abstract}

\begin{keywords}
multiscale method, finite elements, homogenization, two-scale convergence, Helm\-holtz equation
\end{keywords}

\begin{AMS}
35J05, 35B27, 65N12, 65N15, 65N30, 78M40
\end{AMS}

\section{Introduction}
The interest in (locally) periodic media, such as photonic crystals, has grown in the last years as they exhibit astonishing properties such as band gaps or negative refraction, see \cite{EP04negphC, PE03lefthanded, CJJP02negrefraction}. 
In this paper, we study {\itshape artificial magnetism} in the setting of \cite{BF04homhelmholtz}, which has been inspired by the experimental set-up of \cite{OBP02magneticactivity}.

The electro-magnetic properties of a material are governed by the permittivity $\ep$ and the permeability $\mu$. Whereas  for $\ep$ a great range of values can be observed, almost all materials are non-magnetic, i.e.\ $\mu$ is close to $1$. 
Artificial magnetism now describes the occurrence of an (effective) permeability $\mu_{\eff}\neq 1$ in an originally non-magnetic material with $\mu=1$.
Clearly, such a material must exhibit some interior structure to allow this significant change of behavior. 
In \cite{BF04homhelmholtz}, an unusual and highly heterogeneous scaling (in the sense of Allaire \cite[Section 4]{All92twosc}) of material parameters (see below) has been used to obtain a frequency-dependent permeability, which can even have a negative real part, in the homogenization limit.
The observation that $\mu_{\eff}$ can even be negative is of particular interest: When $\ep$ and $\mu$ are negative, such a material can have a negative refraction index, as discussed in \cite{Ves}. 
Metals can have a negative real part of $\ep$, but no negative $\mu$ can be observed in nature.
Moreover, in material with positive $\varepsilon$ and negative $\mu$, wave propagation is forbidden, which corresponds to a frequency in the band gap.

The setting of \cite{BF04homhelmholtz}, inspired by \cite{OBP02magneticactivity} and \cite{BF97homfibres}, is the following (see also Figure \ref{fig:setting}):
A periodic array of rods with high permittivity (depicted in gray in Figure \ref{fig:setting}) is embedded in a lossless dielectric material.
Denoting by the small parameter $\de$ the periodicity, the high permittivity in the rods is modeled by setting $\ep^{-1}=\de^2\ep_i^{-1}$, see Figure \ref{sec:setting} for an exact definition.
The consideration of small inclusions with high permittivity has become a popular modeling also in the three-dimensional setting to tune unusual effective material properties, see \cite{BBF09hom3d, BS10splitring, BS13plasmonwaves, CC15hommaxwell, LS15negindex}.

The overall setting in this paper can be described now as follows: We consider a scatterer  of the form $\Om\times \rz$ with $\Om\subset\rz^2$ bounded and smooth (with $C^2$ boundary). The structure is non-magnetic, i.e.\ $\mu=1$, and has a relative permittivity $\ep_r$, which equals $1$ outside $\Om$.
This effectively two-dimensional geometry (invariant in $x_3$-direction) is illuminated by a transversally polarized field $\VH_{inc}=(0, 0, u_{inc})^T$.
The total magnetic field $\VH=(0, 0, u)^T$ then satisfies the Helmholtz equation
\begin{equation}
\label{eq:helmholtz}
-\nabla\cdot (\ep_r^{-1}\nabla u)-k^2 u=0 \quad \text{on}\quad \rz^2
\end{equation}
with the wave number $k=\om/c$.
We artificially truncate our domain by introducing a sufficiently large convex Lipschitz domain $G\supset\supset \Omega$ and imposing on $\partial G$ the following boundary condition
\begin{equation}
\label{eq:sommerfeldapprox}
\nabla u\cdot n-iku=g:=\nabla u_{inc}\cdot n-iku_{inc},
\end{equation}
which is the popular first order approximation of the Sommerfeld radiation condition, cf.\ \cite{DS13helmholtzaposteriori, Ihl98}.
The relative permittivity $\ep_r=a_\de^{-1}$ inside the scatterer models the described setting of periodic inclusions with high permittivity and is defined in \eqref{eq:hetparam}.
Throughout this article, we assume that there is $k_0>0$ such that $k\geq k_0$, which corresponds to medium and high frequencies.

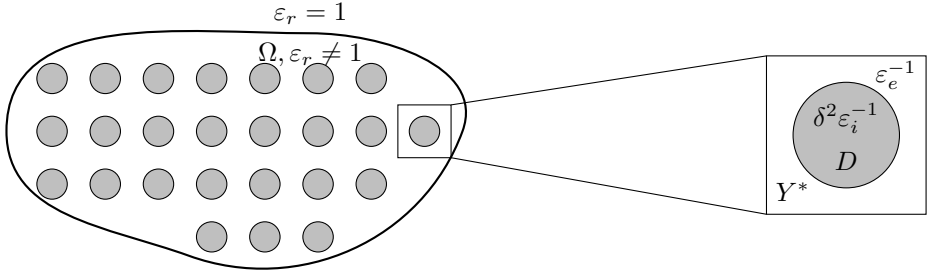
\begin{figure}
\begin{tikzpicture}
\draw[thick] (0,0) to [out=90, in=180] (4,1.3)
to [out=0, in=70] (6,0)
to [out=250, in=340] (2.4,-1.65)
to [out=160, in=270](0,0);
\node [above] at(4,1.3){$\ep_r=1$};
\node [below] at(4,1.3){$\Om, \ep_r\neq1$};
\draw[fill=lightgray] (0.6,0) circle [radius=0.2];
\draw[fill=lightgray] (1.3,0) circle [radius=0.2];
\draw[fill=lightgray] (2.0,0) circle [radius=0.2];
\draw[fill=lightgray] (2.7,0) circle [radius=0.2];
\draw[fill=lightgray] (3.4,0) circle [radius=0.2];
\draw[fill=lightgray] (4.1,0) circle [radius=0.2];
\draw[fill=lightgray] (4.8,0) circle [radius=0.2];
\draw[fill=lightgray] (5.5,0) circle [radius=0.2];
\draw[fill=lightgray] (0.6,0.7) circle [radius=0.2];
\draw[fill=lightgray] (1.3,0.7) circle [radius=0.2];
\draw[fill=lightgray] (2.0,0.7) circle [radius=0.2];
\draw[fill=lightgray] (2.7,0.7) circle [radius=0.2];
\draw[fill=lightgray] (3.4,0.7) circle [radius=0.2];
\draw[fill=lightgray] (4.1,0.7) circle [radius=0.2];
\draw[fill=lightgray] (4.8,0.7) circle [radius=0.2];
\draw[fill=lightgray] (0.6,-0.7) circle [radius=0.2];
\draw[fill=lightgray] (1.3,-0.7) circle [radius=0.2];
\draw[fill=lightgray] (2.0,-0.7) circle [radius=0.2];
\draw[fill=lightgray] (2.7,-0.7) circle [radius=0.2];
\draw[fill=lightgray] (3.4,-0.7) circle [radius=0.2];
\draw[fill=lightgray] (4.1,-0.7) circle [radius=0.2];
\draw[fill=lightgray] (4.8,-0.7) circle [radius=0.2];
\draw[fill=lightgray] (2.7,-1.4)circle [radius=0.2];
\draw[fill=lightgray] (3.4,-1.4)circle [radius=0.2];
\draw[fill=lightgray] (4.1,-1.4)circle [radius=0.2];
\draw (5.15, -0.35) rectangle (5.85,0.35);
\draw (5.85, 0.35) -- (10,1);
\draw (5.85,-0.35) -- (10,-1.1);
\draw (10,-1.1) rectangle (12.1,1);
\draw[fill=lightgray] (11.05, -0.05) circle[radius=0.7];
\node [below] at(11.05,-0.15) {$D$};
\node [above] at (11.05, -0.15) {$\de^2\ep_i^{-1}$};
\node [right] at (10,-0.8) {$Y^*$};
\node [left] at (12.1, 0.75) {$\ep_e^{-1}$};
\end{tikzpicture}
\caption{Left: Scatterer $\Om$ with highly conductive inclusions $D_\de$ (in gray); Right: Zoom into one unit cell $Y$ and scaling of the permittivity $\ep_r^{-1}$.}
\label{fig:setting}
\end{figure}

A numerical treatment of \eqref{eq:helmholtz} with boundary condition  \eqref{eq:sommerfeldapprox} and permittivity with high contrast is very challenging. Solutions to Helmholtz problems show oscillatory behavior in general and the consideration of (locally) periodic media intensifies this effect. 
The challenge is then to well approximate the heterogeneities in the material and the oscillations induced by the incoming wave.
It is important to relate the scales of these oscillations: We basically have a three-scale structure here with $\delta\ll k^{-1}<1$, i.e.\ the periodicity of the material (and the size of the inclusions) is much smaller than the wavelength of the incoming wave.
A direct discretization requires a grid with mesh size $h<\de\ll 1$ to approximate the solution faithfully. This can easily exceed today's computational resources when using a standard approach. In order to make a numerical simulation feasible, so called multiscale methods can be applied.
 The family of Heterogeneous Multiscale Methods (HMM) \cite{EE03hmm, EE05hmm} is a class of multiscale methods that has been proved to be very efficient for scale-separated locally periodic problems. The HMM can exploit local periodicity in the coefficients to solve local sample problems that allow to extract effective macroscopic features and to approximate solutions with a complexity independent of the (small) periodicity $\de$. 
First analytical results concerning the approximation properties of the HMM for elliptic problems have been derived in \cite{Abd05aprioriHMM, EMZ05analysisHMM, Glo06, Ohl05HMM} and then extended to other problems, such as time-harmonic Maxwell's equations \cite{HOV15maxwellHMM}.
Other related works are the HMM for Helmholtz problems with locally periodic media (without high contrast!) \cite{CS14hmmhelmholtz}, or a multiscale asymptotic expansion for the Helmholtz equation \cite{CCZ02asymphelmholtz}.

The new contribution of this article is the first formulation of a Heterogeneous Multiscale Method for the Helmholtz equation with high contrast in the setting of \cite{BF04homhelmholtz}, its comprehensive numerical analysis and its implementation.
The numerical experiment not only shows the practicability of the suggested HMM, but also gives an enlightening insight into the physical background of artificial magnetism and frequency band gaps. 
The HMM can be used to approximate the true solution to \eqref{eq:helmholtz} with a much coarser mesh and hence less computational effort.
We observe that for a frequency in the band gap, wave propagation is prohibited due to destructive interference of waves incited at eigen resonances of the small and highly permittive inclusions.
From the theoretical point of view, the main result is that the energy error converges with rate $k^{q+1}(H+h)$ if the resolution condition $k^{q+2}(H+h)=O(1)$ is fulfilled. 
Here, $H$ and $h$ denote the $\delta$-independent mesh sizes used for the HMM and we assume that the analytical two-scale solution has a stability constant of order $k^q$ with $q\in \nz_0$.
This resolution condition is unavoidable for standard Galerkin discretizations of Helmholtz problems and it shows up with $q=0$ (the optimal case) in our numerical experiments.
A posteriori estimates in this setting are equally possible to obtain. The described HMM itself might be transferable/adaptable to similarly scaled situations in three dimensions.

To complement our numerical analysis, we also show an explicit stability estimate for the solution to the two-scale limit equation, so that we have an explicit (though maybe sub-optimal) result for the stability exponent: $q=3$.
This includes  a second contribution, which may be of own interest: a new stability result for a certain class of Helmholtz-type problems, namely with  matrix-valued discontinuous diffusion coefficient. 
Stability results for the Helmholtz equation have only been proved in the following cases: 
Constant coefficients have been studied under various geometrical conditions in \cite{BSW16helmholtz,EM12helmholtz, Het07helmholtz, Mel95gFEM, MS11helmholtz, MS14signindefinite} and scalar-valued, globally Lipschitz continuous coefficients have been treated in \cite{BGP15hethelmholtzLOD}.

The article is organized as follows: In Section \ref{sec:setting} we detail the (geometric) setting of the heterogeneous problem considered and give some basic notation used throughout the article. 
We present and combine existing homogenization results and analyze the homogenized problems in detail in Section \ref{sec:homogenization}. 
This is the motivation and starting point for the formulation of the corresponding HMM in Section \ref{sec:HMM}. 
The quasi-optimality and a priori estimates for the new method as the central statement of the article are given in Section \ref{sec:quasiopt}. All essential proofs are detailed in Section \ref{sec:proofs}.
A numerical experiment is presented in Section \ref{sec:numexperiment}.

\section{Problem setting}
\label{sec:setting}
For the remainder of this article, let $\Om\subset\subset G\subset \rz^2$ be two bounded domains, where $\partial \Om$ is of class $C^2$ and $G$ is convex and has a polygonal Lipschitz boundary.
Throughout this paper, we use standard notation: 
For a domain $\om$, $p\in[1,\infty)$ and $s\in \rz_{\geq 0}$, $L^p(\om)$ denotes the usual complex Lebesgue space with norm $\|\cdot \|_{L^p(\om)}$ and $H^s(\om)$ denotes the complex (fractional) Sobolev space with the norm $\|\cdot \|_{H^s(\om)}$. The domain $\om$ is omitted from the norms if no confusion can arise.
The dot will denote a normal (real) scalar product, for a complex scalar product we will explicitly conjugate the second component by using $v^*$ as the conjugate complex of $v$.
The $L^2$ scalar product on a domain $\om$ is abbreviated by $(\cdot, \cdot)_\om$ and the corresponding norm abbreviated by $\|\cdot\|_\om$.
For a polygonally bounded domain $\om$, $H^{1/2}(\partial \om)$ denotes the space of functions which are edge-wise $H^{1/2}$. For the domain $G$, we abbreviate by $H^{s}_{pw}(G):=H^s(\Om)\cap H^s(G\setminus\overline{\Om})\cap H^1(G)$ for $s>1$ the function space of piece-wise $H^s$ functions and note that $H^s_{pw}(G)=H^s(G)$ for $s\in [1, \frac{3}{2})$, see \cite{Petz10reginterface}.
For $v\in H^1(\om)$, we frequently use the $k$-dependent norm
\[\|v\|_{1,k,\om}:=\bigl(\|\nabla v\|^2_\om+k^2\|v\|^2_\om\bigr)^{1/2},\]
which is obviously equivalent to the $H^1$-norm.

Let $\Ve_j$ denote the $j$'th unit vector in $\rz^2$. For the rest of the paper, we write $Y:=[-\halb, \halb)^2$ to denote the $2$-dimensional unit square and we say that a function $v\in L^2_{loc}(\rz^2)$ is $Y$-periodic if it fulfills $v(y)=v(y+\Ve_j)$ for all $j=1,2$ and almost every $y\in\rz^2$.
With that we denote $L^2_\sharp(Y):=\{v\in L^2_{loc}(\rz^2)| v\text{ is }Y\text{-periodic}\}$.
Analogously we indicate periodic function spaces by the subscript $\sharp$.
 For example, $H^1_\sharp(Y)$ is the space of periodic $H^1_{loc}(\rz^2)$ functions and we define
\[H^1_{\sharp,0}(Y):=\left\{\phi\in H^1_\sharp(Y)\Bigl|\int_Y\phi=0\right\}.\]
For $Y^*\subset Y$, we denote by $H^1_{\sharp,0}(Y^*)$ the restriction of functions in $H^1_{\sharp,0}(Y)$ to $Y^*$. For $D\subset\subset Y$, $H^1_0(D)$ can be interpreted as subspace of $H^1_\sharp(Y)$ and we will write $H^1_0(D)_\sharp$ to emphasize this periodic extension.
By $L^p(\Om;X)$ we denote Bochner-Lebesgue spaces over the Banach space $X$ and we use the short notation $f(x,y):=f(x)(y)$ for $f\in L^p(\Om; X)$. Functions in $L^2(\Om)$ are also regarded as functions in $L^2(G)$ by simple extension by zero. 

Using the above notation we consider the following setting for the (inverse) relative permittivity $\ep_r^{-1}$, see \cite{BF04homhelmholtz}:
 $\Om$ is composed of $\de$-periodically disposed sections of rods, $\de$ being a small parameter. Denoting by $D\subset\subset Y$ a connected domain with $C^2$ boundary, the rods occupy a region $D_\de:=\cup_{j\in I} \de(j+D)$ with $I=\{j\in \gz^2|\delta(j+Y)\subset \Omega\}$. The complement of $D$ in $Y$, which is also connected, is denoted by $Y^*$. The inverse relative permittivity $a_\de:=\ep_r^{-1}$ is then defined (possibly after rescaling) as (cf.\ Figure \ref{fig:setting})
\begin{align}
\label{eq:hetparam}
a_\de(x):= 
\begin{cases}
\de^2\ep_i^{-1} & \text{if } x\in D_\de\qquad\quad\;\; \text{ with }\ep_i\in \cz, \Im(\ep_i)>0, \Re(\ep_i)>0,\\
\ep_e^{-1} & \text{if }x\in \Om\setminus D_\de\qquad \text{ with }\ep_e\in\rz_+,\\
1 & \text{if }x \in G\setminus\overline{\Omega}.
\end{cases}
\end{align}
We assume $\Re(\ep_i)>0$ for simplicity; all results hold -- up to minor modifications in the proofs -- also for $\ep_i$ with $\Re(\ep_i)\leq 0$. 
Physically speaking, this means that the scatterer $\Om$ consists of periodically disposed metallic rods $D_\de$ embedded in a dielectric ``matrix'' medium. 
The scaling of $\de^2$ in the rods corresponds to a constant optical diameter of these inclusions.

It is essential that $\Om\setminus D_\de$ is connected, otherwise the two-scale convergences shown below can fail, see \cite{BS13plasmonwaves} for an example. To assume $D$ as connected is only done for simplicity.

\begin{definition}[Weak solution]
Let the parameter $a_\de$ be defined by \eqref{eq:hetparam} and let $g\in H^{1/2}(\partial G)$. 
We call $u_\de\in H^1(G)$ a weak solution if it fulfills
\begin{equation}
\label{eq:weakHelmholtz}
\int_G a_\de(x)\nabla u_\de\cdot \nabla\psi^*-k^2u_\de\psi^*\, dx-ik\int_{\partial G}u_\de\psi^*\, d\si=\int_{\partial G}g\psi^*\, d\si\quad \forall \psi\in H^1(G).
\end{equation}
\end{definition}
It is well known that for fixed $\de$, there is a unique solution to \eqref{eq:weakHelmholtz}, which can be seen using the Fredholm alternative: The left-hand side fulfills a G{\aa}rding inequality and problem \eqref{eq:weakHelmholtz} as well as the adjoint problem are uniquely solvable.
Throughout the article, $C$ denotes a generic constant, which does not depend on $k$ (and later the mesh sizes $H$ and $h$), but may depend on $k_0$ and may vary from line to line.

\section{Homogenization and analysis of the homogenized equations}
\label{sec:homogenization}
As the parameter $\de$ is assumed to be very small in comparison to the wavelength and the typical length scale of $\Om$, one can reduce the complexity of problem \eqref{eq:weakHelmholtz} by considering the limit $\de\to 0$. This process, called homogenization, can be performed with the tool of two-scale convergence \cite{All92twosc, LNW02twosc} for locally periodic problems. 
In Subsection \ref{subsec:twoscale}, we adopt the two-scale equation from \cite[Section 4]{All92twosc}, derived for highly heterogeneous diffusion problems with Dirichlet boundary condition, and the homogenized effective macroscopic equation from \cite{BF04homhelmholtz} (with Sommerfeld radiation condition) to our setting.
Subsection \ref{subsec:analysis} is devoted to a detailed analysis of the two-scale equation and its homogenized formulation.
Most importantly, this subsection includes a new stability result for solutions to Helmholtz-type problems, generalizing results available in the literature to a larger class of coefficients.
We emphasize that this analysis is an important building block and prerequisite for the numerical analysis in Section \ref{sec:HMM}.

\subsection{Two-scale equation and homogenized formulation}
\label{subsec:twoscale}

Two-scale convergence is a special form of convergence for locally periodic functions, which tries to capture oscillations and lies between weak and strong (norm) convergence. Its definition and main properties can be found in \cite{All92twosc} or \cite{LNW02twosc}, for instance. We write $\twosc$ for the two-scale convergence in short form. 

The special scaling of $a_\de$ with $\de^2$ on a part of $\Om$ leads to a different behaviour of the solution on $D_\de$ and its complement, which can still be seen in the two-scale equation and the homogenized (effective) equation.

\begin{theorem}[Two-scale equation]
\label{thm:twosc}
Let $u_\de$ be the weak solution to \eqref{eq:weakHelmholtz}. There are functions $u\in H^1(G)$, $u_1\in L^2(\Om; H^1_{\sharp,0}(Y^*))$, and $u_2\in L^2(\Om; H^1_0(D)_\sharp)$ such that we have the  following two-scale convergences for $\de \to 0$
\begin{align*}
u_\de&\twosc u(x)+\chi_D(y)u_2(x,y),
&&&\chi_{\Om\setminus \overline{D}_\de}\nabla u_\de&\twosc\chi_{Y^*}(y)(\nabla u(x)+\nabla_y u_1(x,y)),\\
\de\chi_{D_\de}\nabla u_\de&\twosc\chi_D(y)\nabla_y u_2(x,y),
&&&\nabla u_\de&\twosc\nabla u \quad \text{ in }\quad G\setminus \overline{\Om}.
\end{align*} 
Here, the two-scale triple $\Vu:=(u, u_1, u_2)$ is the unique solution of 
\begin{equation}
\label{eq:twoscaleeq}
\begin{split}
\CB((u, u_1, u_2), (\psi, \psi_1, \psi_2))&=\int_{\partial G}g\psi^*\, d\si\\ &\hspace{-43pt}\forall \Vpsi:=(\psi, \psi_1, \psi_2)\in H^1(G)\times L^2(\Om; H^1_{\sharp, 0}(Y^*))\times L^2(\Om; H^1_0(D)_\sharp),
\end{split}
\end{equation}
with the two-scale sesquilinear form $\CB$ defined by
\begin{equation*}
\begin{split}
&\!\!\!\!\CB(\Vv, \Vpsi)\\
&:=\int_\Om \int_{Y^*}\ep_e^{-1}(\nabla v+\nabla_y v_1)\cdot (\nabla\psi^*+\nabla_y \psi_1^*)\, dy dx
+\int_\Om\int_D \ep_i^{-1}\nabla_y v_2\cdot \nabla_y \psi_2^*\, dydx\\
&\quad-k^2\int_G\int_Y (v+\chi_D v_2)(\psi^*+\chi_D \psi_2^*)\, dydx +\int_{G\setminus \overline{\Om}}\nabla v\cdot\nabla \psi^*\, dx -ik\int_{\partial G} v\psi^*\, d\si.
\end{split}
\end{equation*}
\end{theorem}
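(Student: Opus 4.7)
The plan is to follow the standard two-scale convergence approach pioneered by Allaire (for the Dirichlet setting) and Bouchitt\'e-Felbacq, adapted here to the mixed Sommerfeld-type boundary condition and the two distinguished scalings of $a_\de$. The argument rests on three pillars: a uniform a priori estimate for $u_\de$, extraction and identification of two-scale limits, and passage to the limit in \eqref{eq:weakHelmholtz}, with unique solvability of \eqref{eq:twoscaleeq} used at the end to upgrade subsequential to full convergence.

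First I would establish, uniformly in $\de$, the bound
\[
\|u_\de\|_{L^2(G)}+\|\nabla u_\de\|_{L^2(G\setminus\overline{D_\de})}+\|\de\nabla u_\de\|_{L^2(D_\de)}\leq C.
\]
Testing \eqref{eq:weakHelmholtz} with $\psi=u_\de$ and splitting into real and imaginary parts yields a G{\aa}rding-type inequality controlling all three diffusion contributions in terms of $\|u_\de\|_{L^2(G)}$ and the data; the imaginary boundary term additionally bounds the $L^2$-trace on $\partial G$. The main obstacle is controlling $\|u_\de\|_{L^2(G)}$ itself, since the high contrast forbids the usual Rellich-type test function tricks on $D_\de$. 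The robust way around is a contradiction argument: if the bound fails along a sequence $\de_n\to 0$, normalize so that $\|u_{\de_n}\|_{L^2(G)}=1$, extract two-scale limits as in the next step, derive that the limit triple satisfies the homogeneous version of \eqref{eq:twoscaleeq}, and conclude $\Vu=0$ from uniqueness of the two-scale problem. The strong $L^2(G)$-convergence of (a subsequence of) $u_{\de_n}$ to $u=0$, obtained from the $H^1$-bound on $G\setminus \overline{D_\de}$ combined with the direct two-scale structure on $D_{\de_n}$, then contradicts $\|u_{\de_n}\|_{L^2(G)}=1$.

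For the second step, Allaire's two-scale compactness theorem yields, along a subsequence, a limit $u_0(x,y)\in L^2(G\times Y)$. The $\de$-uniform bound on $\nabla u_\de$ away from $D_\de$ combined with the connectedness of $Y^*$ implies that $u_0$ is $y$-independent on $G\times Y^*$, i.e.\ $u_0(x,y)=u(x)$ there with $u\in H^1(G)$, and that $\chi_{\Om\setminus D_\de}\nabla u_\de\twosc\chi_{Y^*}(\nabla u+\nabla_y u_1)$ for some $u_1\in L^2(\Om;H^1_{\sharp,0}(Y^*))$. The bound on $\de\chi_{D_\de}\nabla u_\de$ in turn produces $u_2$ with $\nabla_y u_2\in L^2(\Om\times D)$ satisfying $\de\chi_{D_\de}\nabla u_\de\twosc\chi_D\nabla_y u_2$ and $u_0=u+\chi_D u_2$. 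The key property $u_2(x,\cdot)\in H^1_0(D)_\sharp$ is derived from the global $H^1$-regularity of $u_\de$ across $\partial D_\de$: rescaling a cell, $y\mapsto u_\de(x_0+\de y)-u(x_0)$ has no trace jump on $\partial D$, which in the limit enforces $u_2(x,\cdot)=0$ on $\partial D$. On $G\setminus\overline{\Om}$ the coefficient is constant and $\nabla u_\de\twosc \nabla u$ follows from ordinary weak convergence.

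For the third step I would test \eqref{eq:weakHelmholtz} against oscillating admissible functions
\[
\phi_\de(x)=\psi(x)+\de\psi_1(x,x/\de)+\chi_{D_\de}(x)\psi_2(x,x/\de),
\]
with $\psi\in C^\infty(\overline G)$, $\psi_1\in C_c^\infty(\Om;C^\infty_\sharp(Y^*))$ and $\psi_2\in C_c^\infty(\Om;C^\infty_0(D)_\sharp)$. On $\Om\setminus D_\de$ one has $\nabla\phi_\de=\nabla\psi+\nabla_y\psi_1(x,x/\de)+O(\de)$, while on $D_\de$ the dominant part of $\nabla\phi_\de$ is $\de^{-1}\nabla_y\psi_2(x,x/\de)$; multiplication by the $\de^2\ep_i^{-1}$ scaling produces exactly the factor $\de\nabla u_\de$ whose two-scale limit was identified. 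Passing term by term to the limit by the standard two-scale convergences, together with convergence of the boundary term (using that $\phi_\de\to\psi$ strongly in $L^2(\partial G)$ since $\supp\psi_1,\psi_2\subset\Om$), reproduces exactly $\CB(\Vu,\Vpsi)=\int_{\partial G}g\psi^*\,d\si$. A density argument extends this to all admissible test triples. Finally, unique solvability of \eqref{eq:twoscaleeq} via Fredholm applied to $\CB$ (which inherits a G{\aa}rding inequality on the two-scale product space from the structure of the coefficients) ensures that the entire sequence, not merely a subsequence, converges.
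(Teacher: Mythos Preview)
Your proposal is correct and follows essentially the same route the paper indicates: the paper does not spell out a proof but states that it ``mainly follows the lines of \cite{BF04homhelmholtz} with the application of the two-scale convergences proved in \cite[Section 4]{All92twosc},'' which is precisely the three-pillar argument (uniform bound by contradiction, Allaire-type identification of limits on $Y^*$ and $D$, oscillating test functions) you outline. One small point worth tightening in your Step~1: two-scale convergence alone does not give the strong $L^2$-convergence on $D_\de$ needed for the contradiction; you should use that in the normalized setting the data vanish, so the imaginary part of the equation forces $\de\|\nabla u_{\de_n}\|_{L^2(D_{\de_n})}\to 0$, whence Poincar\'e on each $\de$-cell (applied to $u_{\de_n}$ minus the $H^1$-extension from $\Om\setminus D_{\de_n}$) yields the required strong $L^2$-smallness on $D_{\de_n}$.
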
 
The proof mainly follows the lines of \cite{BF04homhelmholtz} with the application of the two-scale convergences proved in \cite[Section 4]{All92twosc} for a highly heterogeneous diffusion problem.
Note that $u_1$ and $u_2$ are zero outside $\Om$ so that we have $u_\de\wto u$ in $H^1(G\setminus \overline{\Om})$.
We remark that the two-scale equation for a problem with highly heterogeneous coefficients includes two correctors and especially a corrector in the identity part -- in  contrast to the classical elliptic case, see \cite{All92twosc, LNW02twosc}.

The  two-scale equation can be re-cast into a homogenized macroscopic equation which involves effective parameters computed from cell problems, as given in the next theorem.

\begin{theorem}[Homogenized macroscopic equation]
\label{thm:twosccellandmacro}
$(u, u_1, u_2)$ solves the two-scale equation \eqref{eq:twoscaleeq} if and only if we set $u_1(x,y)=\sum_{j=1}^2\!\frac{\partial u}{\partial x_i}|_\Om(x)w_j(y)$, $u_2(x,y)=k^2 u|_\Om(x) w(y)$, and $u\in H^1(G)$ solves
\begin{equation}
\label{eq:homHelmholtz}
B_{\eff}(u, \psi)=\int_{\partial G}g\psi^*\, d\si\qquad \forall \psi\in H^1(G)
\end{equation}
with the effective sesquilinear form 
\begin{equation}
\label{eq:effsesquiform}
B_{\eff}(v, \psi):=\int_G a_{\eff}\nabla v\cdot \nabla \psi^*-k^2\mu_{\eff}v\psi^*\, dx-ik\int_{\partial G}v\psi^*\, d\si.
\end{equation}
Here, the effective parameters are defined as
\begin{align*}
(a_{\eff}(x))_{jk}&:=
\begin{cases}
\int_{Y^*}\ep_e^{-1}(\Ve_j+\nabla_y w_j)\cdot (\Ve_k+\nabla_y w_k^*)\, dy &\text{ if }x\in \Om\\
\Id_{jk}&\text{ if }x\in G\setminus\overline{\Omega}
\end{cases}\\
\text{and }\quad\mu_{\eff}(x)&:=
\begin{cases}
\int_Y1+k^2 w\chi_D\, dy&\text{ if }x\in \Om\\
1&\text{ if }x\in G\setminus \overline{\Omega},
\end{cases}
\end{align*}
where $w_j$ and $w$ are solutions to the following cell problems.
$w_j\in H^1_{\sharp, 0}(Y^*)$, $j=1,2$, solves
\begin{equation}
\label{eq:cella}
\int_{Y^*}\ep_e^{-1}(\Ve_j+\nabla_y w_j)\cdot \nabla_y \psi^*_1\, dy=0 \quad \forall \psi_1\in H^1_{\sharp,0}(Y^*)
\end{equation}
and $w\in H^1_0(D)_\sharp$ solves
\begin{equation}
\label{eq:cellmu}
\int_D\ep_i^{-1}\nabla_yw\cdot \nabla_y \psi_2^*-k^2w\psi_2^*\, dy=\int_D\psi_2^*\, dy \quad \forall \psi_2\in H^1_0(D)_\sharp.
\end{equation}
\end{theorem}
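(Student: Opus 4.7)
The approach is to decouple the two-scale system \eqref{eq:twoscaleeq} by successively testing with the three independent slots of $\Vpsi$, extracting the corrector structure first and then eliminating $u_1,u_2$ from the remaining macroscopic equation.

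\textbf{Step 1 (identify $u_1$).} I would test \eqref{eq:twoscaleeq} with $\Vpsi=(0,\psi_1,0)$, $\psi_1\in L^2(\Om;H^1_{\sharp,0}(Y^*))$. Since only the first integral in $\CB$ is nonzero, the equation decouples pointwise in $x$ to
\[
\int_{Y^*}\ep_e^{-1}\bigl(\nabla u(x)+\nabla_y u_1(x,\cdot)\bigr)\cdot\nabla_y\psi_1^*\,dy=0\quad\forall\,\psi_1\in H^1_{\sharp,0}(Y^*),\ \text{a.e. }x\in\Om.
\]
By linearity in $\nabla u(x)$ and unique solvability of the real-valued cell problem \eqref{eq:cella} in $H^1_{\sharp,0}(Y^*)$, this forces $u_1(x,y)=\sum_{j=1}^2\partial_{x_j}u(x)\,w_j(y)$.

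\textbf{Step 2 (identify $u_2$).} Testing with $\Vpsi=(0,0,\psi_2)$, $\psi_2\in L^2(\Om;H^1_0(D)_\sharp)$, and using $\chi_D\psi_2=\psi_2$, I get, for a.e.\ $x\in\Om$,
\[
\int_D\bigl(\ep_i^{-1}\nabla_y u_2\cdot\nabla_y\psi_2^*-k^2 u_2\psi_2^*\bigr)dy=k^2 u(x)\int_D\psi_2^*\,dy\quad\forall\,\psi_2\in H^1_0(D)_\sharp.
\]
Comparison with \eqref{eq:cellmu} and linearity then give $u_2(x,y)=k^2u(x)w(y)$.

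\textbf{Step 3 (derive the macroscopic equation).} I now insert these representations into \eqref{eq:twoscaleeq} and test with $\Vpsi=(\psi,0,0)$. The diffusive integral over $\Om$ becomes $\int_\Om(\tilde a_{\eff}\nabla u)\cdot\nabla\psi^*\,dx$ with $(\tilde a_{\eff})_{jk}=\int_{Y^*}\ep_e^{-1}(\Ve_j+\nabla_y w_j)\cdot\Ve_k\,dy$. Because $\ep_e^{-1}$ and the data $\Ve_k$ are real, $w_k$ is real-valued, so $w_k^*=w_k$; using $w_k$ as a test function in \eqref{eq:cella} yields $\int_{Y^*}\ep_e^{-1}(\Ve_j+\nabla_y w_j)\cdot\nabla_y w_k^*\,dy=0$, hence $\tilde a_{\eff}=a_{\eff}$. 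For the zero-order term, a direct computation gives
\[
\int_G\int_Y(u+\chi_D u_2)\psi^*\,dy\,dx=\int_G u\psi^*\,dx+k^2\Bigl(\int_D w\,dy\Bigr)\!\int_\Om u\psi^*\,dx=\int_G\mu_{\eff}\,u\psi^*\,dx,
\]
by the definition of $\mu_{\eff}$. Collecting the remaining Dirichlet integral over $G\setminus\overline\Om$ (where $a_{\eff}=\Id$ and $\mu_{\eff}=1$) and the boundary term on $\partial G$ yields exactly \eqref{eq:homHelmholtz}.

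\textbf{Step 4 (converse).} For the other implication, given $u\in H^1(G)$ solving \eqref{eq:homHelmholtz} I define $u_1,u_2$ by the stated formulas; for an arbitrary test triple $(\psi,\psi_1,\psi_2)$ I split $\CB(\Vu,\Vpsi)$ into the three contributions and verify each of them using the respective cell problem identity and the macroscopic equation, thereby recovering \eqref{eq:twoscaleeq}. The only genuinely subtle point in the whole argument is the symmetrization in Step 3 that turns the unsymmetric $\tilde a_{\eff}$ into the symmetric effective tensor $a_{\eff}$ stated in the theorem; everything else is a routine application of the density of tensor products in the Bochner spaces together with the unique solvability of the cell problems.
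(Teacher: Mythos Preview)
Your argument is correct and is precisely the standard decoupling of the two-scale system that the paper has in mind: the paper does not spell out a proof of this theorem but simply points to \cite{All92twosc} and \cite{BF04homhelmholtz}, where exactly this procedure of testing successively with $(0,\psi_1,0)$, $(0,0,\psi_2)$, and $(\psi,0,0)$ is carried out. Your treatment of the symmetrization of $a_{\eff}$ via the cell problem \eqref{eq:cella} and the reality of $w_k$ is also the expected one, so nothing further is needed.
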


The presentation is orientated at the results for diffusion problems in \cite{All92twosc}, which can be seen most prominently in the form of the effective permeability $\mu_{\eff}$.
We prove that it is perfectly equivalent to the representation chosen in \cite{BF04homhelmholtz}, see Proposition \ref{prop:effective}.

The foregoing theorem means that in the limit $\de\to 0$, the scatterer $\Om$ can be described as a homogeneous material with the (effective) parameters $a_{\eff}$ (inverse permittivity) and $\mu_{\eff}$. 
Whereas $a_{\eff}$ is a positive definite matrix (see Proposition \ref{prop:effective}), the effective permeability $\mu_{\eff}$ exhibits some astonishing properties: First of all, its occurrence itself is surprising as the scatterer is non-magnetic. This is the already discussed effect of artificial magnetism.
Secondly, the permeability is frequency-dependent and its real part can have positive and negative sign. In the frequency region with $\Re(\mu_{\eff})<0$ waves cannot propagate leading to photonic band gaps, see \cite{BF04homhelmholtz}.
This effect is also studied numerically in detail in Section \ref{sec:numexperiment}.

We end with two observations on the two-scale equation, which are useful for the analysis later on.
We introduce the ``two-scale energy norm'' on $\CH:=H^1(G)\times L^2(\Om; H^1_{\sharp,0}(Y^*))\times L^2(\Om; H^1_0(D)_\sharp)$ as
\begin{align}
\label{eq:errornorm}
\|(v, v_1, v_2)\|^2_e:=\|\nabla v +\nabla_y v_1\|^2_{G\times Y^*}+\|\nabla_y v_2\|^2_{\Om\times D}+k^2\|v+\chi_D v_2\|^2_{G\times Y}.
\end{align}
In contrast to other homogenization settings, $\nabla v$ and $\nabla_y v_1$ as well as $v$ and $\chi_D v_2$ are no longer orthogonal. Still, the two-scale energy norm is equivalent to the natural norm of $\CH$, which is the statement of the next lemma.

\begin{lemma}
\label{lem:equivenergynorm}
The two-scale energy norm is equivalent to the natural norm of $\CH$
\[\|(v, v_1, v_2)\|_{\CH}^2:=\|v\|_{H^1(G)}^2+\|v_1\|_{L^2(\Om; H^1(Y^*))}^2+\|v_2\|_{L^2(\Om; H^1(D))}^2.\]
Furthermore, the two-scale energy norm is equivalent to the $k$-dependent norm
\[\|(v, v_1, v_2)\|_{k,\CH}^2:=\|v\|_{1,k,G}^2+\|v_1\|_{L^2(\Om; H^1(Y^*))}^2+\|v_2\|_{L^2(\Om; 1,k,D)}^2,\]
where the equivalence constants do not depend on $k$ and we have abbreviated  \[\|v_2\|^2_{L^2(\Om; 1,k,D)}:=\|\nabla_y v_2\|^2_{L^2(\Om; L^2(D))}+k^2\|v_2\|^2_{L^2(\Om; L^2(D))}.\]
\end{lemma}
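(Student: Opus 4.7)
The strategy is to prove the $k$-uniform equivalence $\|\cdot\|_e \sim \|\cdot\|_{k,\CH}$ first and deduce the equivalence with the natural norm from it by comparing $\|\cdot\|_\CH$ and $\|\cdot\|_{k,\CH}$ via $k \geq k_0$. The upper bound $\|(v,v_1,v_2)\|_e^2 \leq C\,\|(v,v_1,v_2)\|_{k,\CH}^2$ is immediate from $|a+b|^2 \leq 2|a|^2 + 2|b|^2$ applied inside $\|\nabla v + \nabla_y v_1\|^2_{G\times Y^*}$ and $\|v+\chi_D v_2\|^2_{G\times Y}$, together with $|Y^*|, |D| \leq |Y| = 1$ and the vanishing of $v_1,v_2$ outside the appropriate subdomains; the resulting constant is manifestly independent of $k$.

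For the reverse inequality I would recover the summands of $\|(v,v_1,v_2)\|_{k,\CH}^2$ from $\|(v,v_1,v_2)\|_e^2$ individually. Poincar\'e--Friedrichs on $H^1_0(D)$ controls $\|v_2\|^2_{\Om\times D}$ by $\|\nabla_y v_2\|^2_{\Om\times D}$, and the zero-mean Poincar\'e inequality on $H^1_{\sharp,0}(Y^*)$ controls $\|v_1\|^2_{\Om\times Y^*}$ by $\|\nabla_y v_1\|^2_{\Om\times Y^*}$. Decomposing $G\times Y$ into its $Y^*$- and $D$-parts and using $v_2 \equiv 0$ on $(G\setminus\Om)\times D$ yields $\|v+\chi_D v_2\|^2_{G\times Y} \geq |Y^*|\,\|v\|^2_G$, hence $k^2 \|v\|^2_G \leq C\,\|(v,v_1,v_2)\|_e^2$; the triangle inequality $k\|v_2\|_{\Om\times D} \leq k\|v+v_2\|_{\Om\times D} + \sqrt{|D|}\,k\|v\|_\Om$ then transfers this to $k^2\|v_2\|^2_{\Om\times D}$.

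The main obstacle is bounding $\|\nabla v\|^2_G$ by $\|\nabla v + \nabla_y v_1\|^2_{G\times Y^*}$. As the lemma itself observes, $\nabla v$ and $\nabla_y v_1$ are \emph{not} $L^2_y$-orthogonal: $\int_{Y^*}\nabla_y v_1\,dy$ reduces via the divergence theorem to a boundary integral on $\partial D$ that need not vanish. I would instead establish the cell-level auxiliary estimate
\[\int_{Y^*}|c+\nabla_y \phi|^2\,dy \;\geq\; \alpha\,|c|^2 \qquad \text{for all } c\in \cz^2,\ \phi\in H^1_{\sharp,0}(Y^*),\]
with $\alpha>0$ depending only on $Y$ and $D$. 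The argument is by contradiction and compactness: were $\alpha = 0$, a minimizing sequence would, upon passing to a weak limit in $H^1(Y^*)$, yield $\phi\in H^1_\sharp(Y)$ with $\nabla_y\phi = -c$ on $Y^*$ for some unit vector $c$. Since $D\subset\subset Y$, a line $\{y_2 = \text{const}\}$ lies entirely in the periodic extension of $Y^*$; integrating $\partial_{y_1}\phi = -c_1$ along it from $-\halb$ to $\halb$ gives $-c_1$, while $Y$-periodicity forces the difference of endpoint values to vanish, so $c_1=0$, and analogously $c_2=0$, a contradiction. Applying the estimate pointwise in $x$ with $c=\nabla v(x)$, $\phi=v_1(x,\cdot)$, integrating over $\Om$, and adding the trivial contribution $|Y^*|\,\|\nabla v\|^2_{G\setminus\Om}$ produces the desired bound; a reverse triangle inequality then recovers $\|\nabla_y v_1\|^2_{\Om\times Y^*}$, completing the $k$-uniform equivalence with $\|\cdot\|_{k,\CH}$. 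The equivalence with $\|\cdot\|_\CH$ finally follows from $\|\cdot\|_\CH \leq \sqrt{1+k_0^{-2}}\,\|\cdot\|_{k,\CH}$ and $\|\cdot\|_{k,\CH} \leq \sqrt{1+k^2}\,\|\cdot\|_\CH$ (with constants depending on $k$ and $k_0$). All ingredients other than the auxiliary cell inequality are routine Poincar\'e- and triangle-inequality manipulations.
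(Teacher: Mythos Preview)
Your argument is correct and substantially more explicit than the paper's. The paper's proof is a one-sentence sketch: it records the ``sharpened Cauchy--Schwarz'' bounds
\[
\Bigl|\int_G\!\int_{Y^*}\!\nabla v\cdot\nabla_y v_1^*\,dy\,dx\Bigr|\le |Y^*|^{1/2}\|\nabla v\|_G\|\nabla_y v_1\|_{\Om\times Y^*},
\quad
\Bigl|\int_G\!\int_D v\,v_2^*\,dy\,dx\Bigr|\le |D|^{1/2}\|v\|_G\|v_2\|_{\Om\times D},
\]
and notes $|Y^*|,|D|<1$. For the $(v,v_2)$ coupling this is indeed enough, because the splitting $\|v+\chi_D v_2\|^2_{G\times Y}=|Y^*|\|v\|_G^2+\|v+v_2\|^2_{G\times D}$ already isolates a positive multiple of $\|v\|_G^2$ --- exactly the route you take. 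For the gradient coupling, however, the sharpened Cauchy--Schwarz by itself only yields
\[
\|\nabla v+\nabla_y v_1\|^2_{G\times Y^*}\ \ge\ \bigl(|Y^*|^{1/2}\|\nabla v\|_G-\|\nabla_y v_1\|_{\Om\times Y^*}\bigr)^2,
\]
which can vanish; the paper leaves the extraction of $\|\nabla v\|_G^2$ implicit. Your cell-level inequality $\int_{Y^*}|c+\nabla_y\phi|^2\,dy\ge\alpha|c|^2$, established by a compactness--periodicity contradiction that exploits $D\subset\subset Y$, is precisely what closes this gap; it is essentially the ellipticity of the perforated-cell homogenized tensor. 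So your approach supplies the missing ingredient and makes the lower bound rigorous, whereas the paper's sketch trades on this fact without stating it.
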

\begin{proof}
The essential ingredient is a sharpened Cauchy-Schwarz inequality for the non-orthogonal terms
\begin{align*}
\Bigl|\int_G\int_{Y^*}\!\!\nabla v\cdot \nabla_y v_1\, dydx\Bigr|&\leq \|\nabla v\|_{L^2(G\times Y^*)}\|\nabla_y v_1\|_{L^2(G\times Y^*)}\\*
&=|Y^*|^{1/2}\|\nabla v\|_{L^2(G)}\|\nabla_y v_1\|_{L^2(\Om\times Y^*)}\\
\text{and }\Bigl|\int_G\int_{Y}v\chi_D v_2\, dydx\Bigr|&\leq \|v\|_{L^2(G\times D)}\|v_2\|_{L^2(G\times D)}=|D|^{1/2}\| v\|_{L^2(G)}\|v_2\|_{L^2(\Om; L^2(D))},
\end{align*}
where $|Y^*|, |D|<1$.
\end{proof}

\begin{lemma}
\label{lem:gaarding}
There exist constants $C_B>0$ and $C_{\min}:=\min\{1, \ep_e^{-1}, \Re(\ep_i^{-1})\}>0$ depending only on the parameters and the geometry, such that $\CB$ is continuous with constant $C_B$ and fulfills a G{\aa}rding inequality with constant $C_{\min}$, i.e.\
\begin{align*}
|\CB(\Vv, \Vpsi)|\leq C_B \|\Vv\|_e\|\Vpsi\|_e\quad \text{and}\quad \Re \CB(\Vv, \Vv)+2k^2\|v+\chi_D v_2\|_{G\times Y}^2\geq C_{\min}\|\Vv\|_e^2
\end{align*}
for all $\Vv:=(v, v_1, v_2), \Vpsi:=(\psi, \psi_1, \psi_2)\in \CH$.
\end{lemma}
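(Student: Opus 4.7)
The plan is to establish both bounds by direct computation: each bound requires only (a) the Cauchy--Schwarz inequality applied termwise, (b) the multiplicative trace inequality on $\partial G$, and (c) the equivalence between $\|\cdot\|_e$ and $\|\cdot\|_{k,\CH}$ established in Lemma~\ref{lem:equivenergynorm}.

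\textbf{Continuity.} First I would treat the four volume integrals defining $\CB$. The first term is bounded by $\ep_e^{-1}\|\nabla v+\nabla_y v_1\|_{\Om\times Y^*}\|\nabla\psi+\nabla_y\psi_1\|_{\Om\times Y^*}$ and the second by $|\ep_i^{-1}|\|\nabla_y v_2\|_{\Om\times D}\|\nabla_y\psi_2\|_{\Om\times D}$; both are directly controlled by $\|\Vv\|_e\|\Vpsi\|_e$. The third (reaction) term yields $k^2\|v+\chi_D v_2\|_{G\times Y}\|\psi+\chi_D\psi_2\|_{G\times Y}$, which is also part of the energy norm. For the fourth, on $G\setminus\overline{\Om}$ the functions $v_1,\psi_1$ vanish (extension by zero), so $\int_{G\setminus\overline{\Om}}\nabla v\cdot\nabla\psi^*\,dx=|Y^*|^{-1}\int_{(G\setminus\overline{\Om})\times Y^*}(\nabla v+\nabla_y v_1)\cdot(\nabla\psi+\nabla_y\psi_1)^*\,dy\,dx$, which after Cauchy--Schwarz is again bounded by $\|\Vv\|_e\|\Vpsi\|_e$. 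The Sommerfeld boundary term $ik\int_{\partial G}v\psi^*\,d\sigma$ requires a multiplicative trace inequality on $G$, giving $k\|v\|_{\partial G}^2\leq C(k^2\|v\|_G^2+\|\nabla v\|_G^2)\leq C\|v\|_{1,k,G}^2$ (using $k\geq k_0$). Combining the resulting bound with the norm equivalence of Lemma~\ref{lem:equivenergynorm} yields $|\!\int_{\partial G}kv\psi^*d\sigma|\leq C\|\Vv\|_{k,\CH}\|\Vpsi\|_{k,\CH}\leq C\|\Vv\|_e\|\Vpsi\|_e$. Adding up the four estimates produces a constant $C_B$ depending only on $\ep_e,\ep_i,k_0$ and the geometry.

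\textbf{G{\aa}rding inequality.} Taking the real part of $\CB(\Vv,\Vv)$, the boundary contribution $-ik\int_{\partial G}|v|^2d\sigma$ is purely imaginary and drops out. Adding $2k^2\|v+\chi_Dv_2\|_{G\times Y}^2$ flips the sign of the indefinite reaction term, giving
\begin{align*}
\Re\CB(\Vv,\Vv)+2k^2\|v+\chi_Dv_2\|_{G\times Y}^2
&=\ep_e^{-1}\!\!\int_\Om\!\!\int_{Y^*}\!\!|\nabla v+\nabla_yv_1|^2\,dy\,dx\\
&\quad+\Re(\ep_i^{-1})\!\!\int_\Om\!\!\int_D\!\!|\nabla_yv_2|^2\,dy\,dx\\
&\quad+\!\int_{G\setminus\overline{\Om}}\!\!|\nabla v|^2\,dx+k^2\|v+\chi_Dv_2\|_{G\times Y}^2.
\end{align*}
The hypothesis $\Re(\ep_i)>0$ ensures $\Re(\ep_i^{-1})>0$, so every coefficient is bounded below by $C_{\min}:=\min\{1,\ep_e^{-1},\Re(\ep_i^{-1})\}$. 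Finally, since $v_1=0$ on $G\setminus\overline{\Om}$, one has $\|\nabla v+\nabla_yv_1\|^2_{(G\setminus\overline{\Om})\times Y^*}=|Y^*|\|\nabla v\|^2_{G\setminus\overline{\Om}}\leq\|\nabla v\|^2_{G\setminus\overline{\Om}}$, so combining the gradient contributions over $\Om\times Y^*$ and $G\setminus\overline{\Om}$ reproduces $\|\nabla v+\nabla_yv_1\|_{G\times Y^*}^2$ up to the factor $C_{\min}$, and the right-hand side is therefore at least $C_{\min}\|\Vv\|_e^2$.

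\textbf{Main difficulty.} The only non-mechanical step is the boundary term in the continuity estimate: it is the one ingredient that cannot be bounded termwise by $\|\cdot\|_e$ without invoking both a trace inequality and the equivalence of Lemma~\ref{lem:equivenergynorm}. Everything else is bookkeeping between the two integration domains $\Om\times Y^*$ and $(G\setminus\overline{\Om})\times Y^*$, exploiting that $v_1$ and $v_2$ vanish outside $\Om$ so that the ``macroscopic'' contribution of $\nabla v$ over $G\setminus\overline{\Om}$ is exactly what completes the energy norm.
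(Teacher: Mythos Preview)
Your proof is correct and follows essentially the same approach as the paper's own (very terse) argument: the paper simply states that the G{\aa}rding inequality is obvious from the definition of $\CB$ and that continuity follows from the multiplicative trace inequality, which is exactly what you have carefully unpacked. Your explicit bookkeeping of the contributions over $\Om\times Y^*$ versus $(G\setminus\overline{\Om})\times Y^*$ and your invocation of Lemma~\ref{lem:equivenergynorm} to handle the boundary term are precisely the details the paper leaves implicit.
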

\begin{proof}
The G{\aa}rding inequality is obvious from the definition of $\CB$ in Theorem \ref{thm:twosc}. The continuity of $\CB$ follows from the multiplicative trace inequality as in \cite{Mel95gFEM}.
\end{proof}

\subsection{Stability and regularity}
\label{subsec:analysis}
In this section, we derive stability and regularity results for the two-scale equation and its homogenized formulation. To achieve that goal, we analyze the cell problems and the macroscopic equation separately. Although the homogenized macroscopic equation is of Helmholtz-type, the unusual effective parameters introduce new aspects and challenges in the stability analysis.

\begin{proposition}
\label{prop:effective}
The effective parameters in $\Omega$ have the following properties:
\begin{enumerate}
\item $a_{\eff}$ is a real-valued, symmetric, uniformly elliptic matrix.
\item $\mu_{\eff}$ is a complex scalar with the upper bound on the absolute value
\begin{equation}
\label{eq:mueffupper}
|\mu_{\eff}|\leq C_\mu \qquad \text{with } C_\mu=C(\ep_i, D, Y, k_0).
\end{equation}
\item $\mu_{\eff}$ can be equivalently written as
\[\mu_{\eff}=1+\sum_{n\in \nz}\frac{k^2\varepsilon_i}{\lambda_n-k^2\varepsilon}\Bigl(\int_D\phi_n\, dx\Bigr)^2,\]
where $(\lambda_n, \phi_n)$ are the eigenvalues and eigenfunctions of the Laplace operator on $D$ with Dirichlet boundary conditions.
\item It holds that
\begin{equation}
\label{eq:muefflower}
\Im(\mu_{\eff})\geq C(\ep_i, D, Y)/k^2>0.
\end{equation} 
\end{enumerate}
\end{proposition}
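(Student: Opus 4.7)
My plan is to derive the eigenfunction expansion in (iii) first, from which (ii) and (iv) follow by term-by-term estimates. Part (i) is essentially classical periodic homogenization for a perforated cell.

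For (i), since the cell problem \eqref{eq:cella} has real coefficients and data, Lax--Milgram well-posedness in $H^1_{\sharp,0}(Y^*)$ forces the $w_j$ to be real, and hence $a_{\eff}$ is real. Symmetry follows by testing \eqref{eq:cella} for $w_j$ against $\psi_1=w_k$ and vice versa, both of which reduce to $-\int_{Y^*}\ep_e^{-1}\nabla_y w_j\cdot\nabla_y w_k$. Uniform ellipticity rests on the variational characterization
\[\xi^T a_{\eff}\xi \;=\; \ep_e^{-1}\min_{p\in H^1_{\sharp,0}(Y^*)}\int_{Y^*}|\xi+\nabla_y p|^2\,dy,\]
which is strictly positive for $\xi\neq 0$ (else $p=-\xi\cdot y$ would have to be $Y$-periodic), and a continuity/compactness argument on the unit sphere in $\rz^2$ produces a $k$-independent lower bound.

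For (iii), I would choose an $L^2(D)$-orthonormal basis $\{\phi_n\}_{n\in\nz}$ of real Dirichlet eigenfunctions of $-\Delta$ on $D$, with eigenvalues $0<\lambda_1\le\lambda_2\le\cdots$. Expanding $w=\sum_n c_n\phi_n$ and testing \eqref{eq:cellmu}, multiplied through by $\ep_i$, against each $\phi_n$ gives $(\lambda_n-k^2\ep_i)c_n=\ep_i\int_D\phi_n$; the denominator is nonzero since $\Im(k^2\ep_i)=k^2\Im(\ep_i)>0$ while $\lambda_n\in\rz$. Hence $c_n=\ep_i(\int_D\phi_n)/(\lambda_n-k^2\ep_i)$, and the $L^2(D)$-expansion $1=\sum_n(\int_D\phi_n)\phi_n$, combined with $\mu_{\eff}=1+k^2\int_D w$ (immediate from the definition in Theorem \ref{thm:twosccellandmacro}), yields the claimed series. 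Part (ii) is then immediate: $|\lambda_n-k^2\ep_i|\ge k^2\Im(\ep_i)$ gives $|k^2\ep_i/(\lambda_n-k^2\ep_i)|\le |\ep_i|/\Im(\ep_i)$ uniformly in $n$ and $k$, and Parseval's identity $\sum_n(\int_D\phi_n)^2=\|1\|^2_{L^2(D)}=|D|$ yields a $k$-independent upper bound on $|\mu_{\eff}|$.

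For (iv), a direct computation gives
\[\Im\!\left(\frac{k^2\ep_i}{\lambda_n-k^2\ep_i}\right)=\frac{k^2\lambda_n\Im(\ep_i)}{|\lambda_n-k^2\ep_i|^2}\ge 0,\]
so every term of the series is non-negative. Since $\sum_n(\int_D\phi_n)^2=|D|>0$, some index $n_0$ must satisfy $\int_D\phi_{n_0}\neq 0$; retaining only this term and bounding $|\lambda_{n_0}-k^2\ep_i|^2\le 2\lambda_{n_0}^2+2k^4|\ep_i|^2$ together with $k\ge k_0$ yields a lower bound of the claimed form $C(\ep_i,D,Y)/k^2$. The main subtlety lies in this last step: one must exhibit a non-vanishing $\int_D\phi_{n_0}$ and verify that the resulting geometry- and parameter-dependent constant can indeed be chosen $k$-independent on the range $k\ge k_0$.
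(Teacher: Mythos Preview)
Your proposal is correct and follows essentially the same route as the paper: derive the eigenfunction expansion for $w$ and $\mu_{\eff}$, and read off the bounds term by term. Two minor differences are worth flagging.

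For (ii), the paper does not go through the series. Instead it observes that, since $\Im(\ep_i^{-1})<0$, the cell sesquilinear form in \eqref{eq:cellmu} is uniformly coercive on $H^1_0(D)$, so Lax--Milgram gives $\|w\|_{1,k,D}\le C(\ep_i,k_0,D)/k$ directly; the bound $|\mu_{\eff}|\le 1+k^2\|w\|_{L^1(D)}\le C$ then follows. Your series argument via $|\lambda_n-k^2\ep_i|\ge k^2\Im(\ep_i)$ and Parseval is equally valid and arguably more transparent; the paper's route has the advantage of not needing (iii) at all and of extending verbatim to non-constant $\ep_i$, where an eigenfunction expansion is unavailable.

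For (iv), the paper retains specifically the ground-state term $n=1$ (their $n=0$) and invokes the classical fact that the first Dirichlet eigenfunction is sign-definite, hence $\int_D\phi_1\neq 0$. This removes the existential step you flag as the ``main subtlety'': you do not need to argue abstractly that \emph{some} $\int_D\phi_{n_0}$ is nonzero, since the first one always is. The paper then bounds $|\lambda_1-k^2\ep_i|^2\le 2k^4|\ep_i|^2$ under the high-frequency assumption $\lambda_1\le k^2|\ep_i|$, which is the same mechanism as your estimate; in either version the resulting constant implicitly depends on $k_0$, despite the notation $C(\ep_i,D,Y)$ in the statement.
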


The proof is postponed to Subsection \ref{subsec:homproof}.
The upper and lower bound on $\mu_{\eff}$ can only be obtained for $\Im(\ep_i)>0$. If we have an ideal lossless material (i.e.\ $\Im(\ep_i)=0$), $\mu_{\eff}$ is unbounded, see \cite{BF04homhelmholtz}.
As discussed above, the foregoing proposition shows that our $\mu_{\eff}$ agrees with the one presented in \cite{BF04homhelmholtz}.
However, we stress two advantages of our choice: 
First, it still holds for complex, but non-constant parameters $\varepsilon_i$.
Second, it only involves the solution of one cell problem rather than determining all eigenvalues and eigenfunctions of the Dirichlet Laplacian, which is very useful for the numerical implementation.
The lower bound on $\Im(\mu_{\eff})$ might be improved using sophisticated methods for estimating eigenvalues and averages of eigenfunctions of the Dirichlet Laplacian. 
We emphasize that our numerical experiment from Section \ref{sec:numexperiment} does not show this severe $k$-dependence of the lower bound.

For the properties of the effective parameters, the cell problems have already been implicitly analyzed. Hence, results on the two-scale corrections $u_1$ and $u_2$ follow immediately.

\begin{proposition}
\label{prop:stabilitycell}
There are $C_{\stab, 1}, C_{\stab,2}>0$ depending only on $\ep_i^{-1}$, $\ep_e^{-1}$, $D$, $Y^*$, and $k_0$, such that the correctors $u_1$ and $u_2$ satisfy
\[\|u_1\|_{L^2(\Om; H^1(Y^*))} \leq C_{\stab, 1}\|\nabla u\|_{G}\quad
\text{ and }\quad
 \|u_2\|_{L^2(\Om; 1,k, D)}\leq C_{\stab, 2} \|u\|_{1,k, G}\]
with the notation $\|\cdot\|_{L^2(\Om; 1,k,D)}$ explained in Lemma \ref{lem:equivenergynorm}.
\end{proposition}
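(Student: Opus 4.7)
The two estimates decouple. The bound on $u_1$ reduces to standard analysis of the elliptic corrector problem \eqref{eq:cella}, while the bound on $u_2$ reduces, via the product structure $u_2(x,y)=k^2 u|_\Om(x)\,w(y)$ from Theorem~\ref{thm:twosccellandmacro}, to a single scalar estimate $\|w\|_{1,k,D}\le Ck^{-1}$ on the Helmholtz-type cell problem \eqref{eq:cellmu}.

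For the $u_1$-bound I would test \eqref{eq:cella} with $\psi_1=w_j$ and combine $\ep_e^{-1}$-coercivity with the Poincar\'e inequality for zero-mean functions in $H^1_{\sharp,0}(Y^*)$ to obtain a uniform (in $x$ and $k$) estimate $\|w_j\|_{H^1(Y^*)}\le C(\ep_e^{-1},Y^*)$. Plugging $u_1(x,y)=\sum_{j=1}^{2}\partial_{x_j}u(x)\,w_j(y)$ into the two-scale norm and using Cauchy--Schwarz on the sum over $j$ then immediately yields
\[\|u_1\|_{L^2(\Om;H^1(Y^*))}^2\le \Bigl(\sum_{j=1}^{2}\|w_j\|_{H^1(Y^*)}^2\Bigr)\|\nabla u\|_\Om^2\le C_{\stab,1}^2\|\nabla u\|_G^2.\]

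For the $u_2$-bound I would first observe the pointwise identity $\|u_2\|_{L^2(\Om;1,k,D)}^2=k^4\|u\|_\Om^2\|w\|_{1,k,D}^2$, so that together with the trivial bound $k\|u\|_\Om\le\|u\|_{1,k,G}$ everything reduces to proving $\|w\|_{1,k,D}\le Ck^{-1}$. To obtain this, I plan to expand $w=\sum_{n\in\nz} c_n\phi_n$ in the orthonormal $L^2(D)$-basis $(\phi_n)_{n\in\nz}$ of Dirichlet Laplacian eigenfunctions of $D$; this is exactly the expansion used for the series representation of $\mu_{\eff}$ in item~3 of Proposition~\ref{prop:effective}. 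Testing \eqref{eq:cellmu} against $\phi_n$ gives the coefficients $c_n=\ep_i\int_D\phi_n\,dy\,/\,(\lambda_n-k^2\ep_i)$; since $\Im(\ep_i)>0$ one has the uniform lower bound $|\lambda_n-k^2\ep_i|\ge k^2\Im(\ep_i)$, and together with Parseval ($\sum_n|\int_D\phi_n|^2=|D|$) this yields $\|w\|_D\le Ck^{-2}$. A complementary estimate $\|\nabla w\|_D\le Ck^{-1}$ then follows by testing \eqref{eq:cellmu} with $\psi_2=w$ and taking imaginary parts, which gives $|\Im(\ep_i^{-1})|\|\nabla w\|_D^2\le |D|^{1/2}\|w\|_D$. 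Combining the two produces $\|w\|_{1,k,D}^2\le Ck^{-2}$, and hence the desired bound on $u_2$.

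The only delicate step is the $k$-uniformity of the constant in $\|w\|_D\le Ck^{-2}$: because $k^2\Re(\ep_i)$ may coincide with one of the Dirichlet eigenvalues $\lambda_n$ of $-\Delta$ on $D$ (a cell-scale resonance), the denominator $\lambda_n-k^2\ep_i$ can become small, and if $\Im(\ep_i)$ were zero then $w$ would actually blow up, as already noted in the remark after Proposition~\ref{prop:effective}. The assumption $\Im(\ep_i)>0$ is exactly what supplies the uniform damping $|\lambda_n-k^2\ep_i|\ge k^2\Im(\ep_i)$ that closes the argument; it is the same mechanism producing the upper bound \eqref{eq:mueffupper} on $|\mu_{\eff}|$, which is why the authors can describe the corrector estimates as an ``immediate'' consequence of the effective-parameter analysis.
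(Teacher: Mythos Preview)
Your argument is correct in all parts. The $u_1$-estimate is exactly the standard corrector bound, and for $u_2$ your reduction to $\|w\|_{1,k,D}\le Ck^{-1}$ together with the eigenfunction computation and the imaginary-part test closes correctly.

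The paper, however, does not write out a separate proof for this proposition; it simply points back to the cell-problem analysis carried out in the proof of Proposition~\ref{prop:effective}. There the key bound $\|w\|_{1,k,D}\le C(\ep_i,k_0,D)/k$ is obtained in one stroke: the sesquilinear form of \eqref{eq:cellmu} is uniformly coercive with respect to $\|\cdot\|_{1,k,D}$ because $\Im(\ep_i^{-1})<0$ (indeed $|b(v,v)|\ge \alpha\|v\|_{1,k,D}^2$ with $\alpha$ depending only on $\ep_i$), and the right-hand side satisfies $|\int_D\psi_2^*|\le |D|^{1/2}k^{-1}\|\psi_2\|_{1,k,D}$, so Lax--Milgram--Babu\v{s}ka gives the estimate directly. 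Your route through the eigenfunction expansion is a genuine alternative: it reuses the spectral representation already needed for item~3 of Proposition~\ref{prop:effective} and makes the damping mechanism $|\lambda_n-k^2\ep_i|\ge k^2\Im(\ep_i)$ fully explicit, at the price of an extra step (first $\|w\|_D$, then $\|\nabla w\|_D$). The paper's coercivity argument is shorter and does not require the spectral basis; your version yields more transparent constants and ties the corrector bound visibly to the same resonance analysis that governs $\mu_{\eff}$.
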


All elements of the two-scale solution triple admit higher regularity depending on the geometry.
\begin{proposition}
\label{prop:regularity}
Let $g\in H^{1/2}(\partial G)$. There are regularity coefficients $s(\Om, G)$, $s(Y^*)$, and $s(D)$ with $s(\cdot)\in (\halb, 1]$ such that
\begin{enumerate}
\item  for all $0<s<s(D)$, $u_2\in L^2(\Om; H^{1+s}(D))$ with $\|u_2\|_{L^2(\Om; H^{1+s}(D))}\leq C_{\reg, 2}\,k\|u\|_{1,k,\Om}$;
\item for all $0<s<s(Y^*)$, $u_1\in L^2(\Om; H^{1+s}(Y^*))$ with $\|u_1\|_{L^2(\Om; H^{1+s}(Y^*))}\!\leq C_{\reg, 1}\,\|\nabla u\|_\Om$;
\item for all $0<s<s(\Om, G)$, $u\in H^{1+s}_{pw}(G)$ with 
\begin{align}
\label{eq:regularityu}
\|u\|_{H^{1+s}_{pw}(G)}\leq C(k\|u\|_{1,k,G}+\|f\|_G+\|g\|_{H^{1/2}(\partial G)}).
\end{align}
\end{enumerate}
\end{proposition}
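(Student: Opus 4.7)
The three assertions correspond to the two cell problems and the effective macroscopic equation, and we treat them separately, reusing the representation of $(u,u_1,u_2)$ from Theorem \ref{thm:twosccellandmacro} and the stability bounds from Proposition \ref{prop:stabilitycell}. In each case the strategy is the same: identify an elliptic PDE satisfied by the component of interest, apply a shift/$H^{1+s}$ regularity theorem on the associated domain, and absorb the right-hand side via the already established stability estimates.

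\textbf{For (i):} For a.e.\ fixed $x\in\Om$, the function $u_2(x,\cdot)=k^2 u(x)\,w$ satisfies the $y$-elliptic equation $-\ep_i^{-1}\Delta_y u_2(x,\cdot)-k^2 u_2(x,\cdot)=k^2 u(x)$ in $D$ with homogeneous Dirichlet data on $\partial D$. Since $\partial D$ is $C^2$, the classical shift theorem for the Dirichlet Laplacian yields $\|u_2(x,\cdot)\|_{H^{1+s}(D)}\le C\|\Delta_y u_2(x,\cdot)\|_{L^2(D)}\le Ck^2(|u(x)|+\|u_2(x,\cdot)\|_{L^2(D)})$ for every $s<s(D)$ (with $s(D)=1$ since $\partial D$ is $C^2$; for $s<1$ one interpolates with the $H^1$ bound). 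Squaring, integrating over $\Om$, and inserting $k\|u\|_{L^2(\Om)}\le \|u\|_{1,k,\Om}$ together with $k\|u_2\|_{L^2(\Om;L^2(D))}\le C\|u\|_{1,k,G}$ from Proposition \ref{prop:stabilitycell} gives the claimed bound $\|u_2\|_{L^2(\Om;H^{1+s}(D))}\le C_{\reg,2}\,k\|u\|_{1,k,\Om}$.

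\textbf{For (ii):} Using the factorization $u_1(x,y)=\sum_{j=1}^2 \partial_{x_j}u(x)\,w_j(y)$ it suffices to bound the cell correctors $w_j$ in $H^{1+s}(Y^*)$ independently of $k$. Each $w_j$ solves an elliptic cell problem on $Y^*$ with constant coefficient $\ep_e^{-1}$, periodic conditions on $\partial Y$ and a natural (Neumann-type) condition on $\partial D$; since $\partial D\in C^2$ and the periodic identification removes the corners of $Y$, standard elliptic regularity yields $\|w_j\|_{H^{1+s}(Y^*)}\le C$ with $s<s(Y^*)$ and $C$ independent of $k$. Taking the $L^2(\Om)$-norm in $x$ of the product factorization then gives $\|u_1\|_{L^2(\Om;H^{1+s}(Y^*))}\le C_{\reg,1}\|\nabla u\|_\Om$.

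\textbf{For (iii):} By Theorem \ref{thm:twosccellandmacro}, $u$ solves $-\nabla\cdot(a_{\eff}\nabla u)-k^2\mu_{\eff}u=f$ in $G$ with $\nabla u\cdot n-iku=g$ on $\partial G$. The coefficients are smooth on each of the two subdomains $\Om$ and $G\setminus\overline{\Om}$ and jump across the $C^2$ interface $\partial\Om$; thus piecewise interface regularity (cf.\ \cite{Petz10reginterface}) combined with the regularity theory for mixed boundary value problems on the convex polygonal domain $G$ (the Robin term $iku$ on $\partial G$ being an $H^{1/2}$ boundary datum) provides an $H^{1+s}_{pw}$ shift estimate for some $s<s(\Om,G)\in(\halb,1]$:
\[
\|u\|_{H^{1+s}_{pw}(G)}\le C\bigl(\|k^2\mu_{\eff}u+f\|_{L^2(G)}+\|iku+g\|_{H^{1/2}(\partial G)}\bigr).
\]
Using the uniform bound $|\mu_{\eff}|\le C_\mu$ from Proposition \ref{prop:effective}, the multiplicative trace inequality to estimate $\|u\|_{H^{1/2}(\partial G)}\le C\|u\|_{H^1(G)}\le C\|u\|_{1,k,G}$, and $k\|u\|_{L^2(G)}\le\|u\|_{1,k,G}$, the right-hand side is bounded by $C(k\|u\|_{1,k,G}+\|f\|_G+\|g\|_{H^{1/2}(\partial G)})$, which gives \eqref{eq:regularityu}.

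The main obstacle is (iii): the effective coefficients are discontinuous across $\partial\Om$, so one cannot use a standard $H^2$ regularity theorem on $G$, and the boundary of $G$ is only polygonal with a Robin-type condition. One must carefully combine an interface shift theorem (which loses regularity in the fractional range $s\in(\halb,1]$) with the corner regularity for the convex polygon and track the $k$-dependence so that the Robin boundary term does not produce a power of $k$ higher than one.
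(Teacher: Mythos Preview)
Your proof is correct and follows essentially the same approach as the paper's: classical elliptic regularity for the two cell problems (moving the $k^2u_2$ term to the right-hand side in (i), which produces the extra factor $k$), and the piecewise interface regularity theory of \cite{Petz10reginterface} for (iii) with $k^2\mu_{\eff}u$ treated as data. The paper's proof is a three-sentence sketch and your version simply spells out the same steps in more detail; the only minor quibble is that in (iii) you need the ordinary trace inequality $\|u\|_{H^{1/2}(\partial G)}\le C\|u\|_{H^1(G)}$ rather than the multiplicative one.
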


\begin{proof}
The first two points follow from classical elliptic regularity theory. For the estimate of $u_2$ the term $k^2u_2$ is treated as additional right-hand side thereby leading to the additional factor $k$. Confer similar higher regularity estimates for the classical Helmholtz equation as in \cite{Mel95gFEM}, for instance.  
The result for $u$ can also be deduced using regularity for elliptic interface problems, see \cite{Petz10reginterface}. In this case, the term $k^2\mu_{\eff}u$ is interpreted as an additional right-hand side. 
\end{proof}

With $C^2$ boundary of $D$ (and then also $Y^*$), we obtain $s(D)=s(Y^*)=1$. For the numerical treatment, $D$ is approximated by a polygonally bounded Lipschitz domain.
As also $\partial \Om$ is of class $C^2$ and $G$ is convex, we have $s(\Om, G)=1$. The interface $\partial \Om$ is also approximated by a piecewise polygonal interface in practical numerical schemes. 
In general, the maximal regularity  of the problems posed on a polygonal Lipschitz domain depends on the domain's maximal interior angle, see \cite{Petz10reginterface}. 
We give the regularity results in their general form as polygonal (non-convex) domains have to be considered in the process of boundary approximation in Sections \ref{sec:HMM} and \ref{sec:quasiopt}.

Looking at estimate \eqref{eq:regularityu}, we note that we need an estimate  for $\|u\|_{1,k,G}$ in terms of the data. 
From Fredholm theory we have a stability estimate of the form $\|u\|_{1,k,G}\leq C(k) \|g\|_{\partial G}$, but the dependence of the constant on the wavenumber $k$ is unknown. We therefore make the following assumption of polynomial stability.

\begin{assumption}
\label{asspt:polynomialstable}
Assume that there is $q\in \nz_0$ and $C_{\stab,0}>0$ such that the solution $u$ to \eqref{eq:homHelmholtz} with additional right-hand side $f\in L^2(G)$  fulfills
\[\|u\|_{1,k,G}\leq C_{\stab,0}\, k^q(\|f\|_G+\|g\|_{H^{1/2}(\partial G)}).\]
\end{assumption}
Polynomial stability is not trivial: There are  so called trapping domains leading to exponential growth of the  stability estimate in $k$, see \cite{BCGLL11trapping}.
In our setting, we can prove the assumption with $q=3$ under some (mild) additional assumptions.  More explicitly speaking, we have the following theorem, which is proved in Subsection \ref{subsec:stabilityproof}.

\begin{theorem}[Stability]
\label{thm:stabilityeff}
Assume that there is $\gamma>0$ such that
\begin{align}
\label{eq:geometry}
x\cdot n_G\geq \gamma \text{ on }\partial G &&x\cdot n_\Om\geq 0\text{ on }\partial \Om,
\end{align}
where $n$ denotes the outer normal of the domain specified in the subscript. Furthermore assume that $a_{\eff}|_{G\setminus \overline{\Om}}-a_{\eff}|_\Om$ is negative semi-definite. 
Let $u$ be the solution to \eqref{eq:homHelmholtz} with additional volume term $\int_G f\phi^*\, dx$ on the right hand-side for $f\in L^2(G)$. Then there is $C_{\stab,0}$ only depending on the geometry, the parameters, and $k_0$, such that $u$ satisfies the stability estimate
\[ \|u\|_{1,k,G}\leq C_{\stab, 0}(k^3\|f\|_{G\setminus\overline{\Om}}+k^2\|f\|_\Om+k^{3/2}\|g\|_{\partial G}+k^{-1}\|g\|_{H^{1/2}(\partial G)}).\] 
\end{theorem}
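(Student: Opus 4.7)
The plan is to adapt the classical Morawetz--Rellich multiplier technique (see e.g.\ \cite{Mel95gFEM, Het07helmholtz, BGP15hethelmholtzLOD}) to the present setting with piecewise constant symmetric matrix coefficient $a_{\eff}$ and complex-valued $\mu_{\eff}$ whose imaginary part decays as $k^{-2}$. I would combine two identities, obtained by testing \eqref{eq:homHelmholtz} (augmented by the source $\int_G f\psi^*\,dx$) with $\psi_1 = u$ and with the Morawetz multiplier $\psi_2 = 2\Vx\cdot\nabla u + \beta u$ for a suitable $\beta>0$. Taking $\psi = u$ and the imaginary part, all real contributions cancel and only the dissipative pieces remain:
\[ k^2\int_G \Im(\mu_{\eff})|u|^2\,dx + k\int_{\partial G}|u|^2\,d\si = -\Im\Bigl(\int_{\partial G} g u^*\,d\si + \int_G f u^*\,dx\Bigr). \]
Since $\Im(\mu_{\eff}) \geq C k^{-2}$ on $\Om$ by Proposition \ref{prop:effective}(4) and vanishes on $G\setminus\overline{\Om}$, this yields $\|u\|_\Om^2 + k\|u\|_{\partial G}^2 \leq C(\|f\|_G\|u\|_G + \|g\|_{\partial G}\|u\|_{\partial G})$ and hence, via Young's inequality, preliminary bounds on $\|u\|_\Om$ and $\|u\|_{\partial G}$.

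For the Morawetz test $\psi_2$, the key algebraic identities (for any constant symmetric real $A$, in dimension $d$) are
\[ 2\Re\bigl(A\nabla u \cdot \nabla(\Vx\cdot\nabla u)^*\bigr) = (2-d)\,A\nabla u\cdot\nabla u^* + \nabla\cdot\bigl(\Vx\,(A\nabla u\cdot\nabla u^*)\bigr) \]
and $2\Re\bigl(\mu u(\Vx\cdot\nabla u)^*\bigr) = \Re(\mu)\,\Vx\cdot\nabla|u|^2 - 2\Im(\mu)\,\Im\bigl(u(\Vx\cdot\nabla u)^*\bigr)$, applied separately in $\Om$ and in $G\setminus\overline{\Om}$. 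In dimension $d=2$ the $(2-d)$-term vanishes, so it is the $\beta u$ correction in $\psi_2$ that produces the needed $\beta\int_G a_{\eff}\nabla u\cdot\nabla u^*$ piece. Integration by parts yields three types of boundary contributions: (a) on $\partial G$, the term $\int_{\partial G}(\Vx\cdot n_G)(|\nabla u|^2 - k^2|u|^2)\,d\si$, whose gradient part is $\geq \gamma\|\nabla u\|_{\partial G}^2$ by \eqref{eq:geometry}; (b) on $\partial\Om$, an interface contribution $\int_{\partial\Om}(\Vx\cdot n_\Om)\bigl[(a_{\eff}|_\Om - I)\nabla u\cdot\nabla u^* - k^2(\Re\mu_{\eff}-1)|u|^2\bigr]\,d\si$, whose matrix part is non-negative by the semi-definiteness hypothesis together with $\Vx\cdot n_\Om\geq 0$, while the scalar part is absorbed by a trace inequality and the first step; (c) an impedance piece $-2k\,\Im\!\int_{\partial G}u(\Vx\cdot\nabla u)^*\,d\si$, which I would rewrite using $\partial_n u = iku + g$ and a tangential/normal decomposition of $\Vx$ (the weight $k^{-1}\|g\|_{H^{1/2}(\partial G)}$ originates here through a tangential integration of $g$ along $\partial G$). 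Combining this identity with the first step, using uniform ellipticity of $a_{\eff}$ and choosing $k$-weights carefully in Young's inequality, isolates $\tfrac{1}{2}\|u\|_{1,k,G}^2$ on the left and produces the asserted bound. The asymmetry between $k^2\|f\|_\Om$ and $k^3\|f\|_{G\setminus\overline{\Om}}$ reflects the extra $L^2(\Om)$-dissipation obtained in the first step, which is unavailable outside $\Om$.

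The main obstacle is that $\psi_2 = 2\Vx\cdot\nabla u + \beta u$ is not admissible as a test function in $H^1(G)$: the transmission condition implicit in \eqref{eq:homHelmholtz} forces $a_{\eff}\nabla u\cdot n_\Om$ to be continuous across $\partial\Om$ while $a_{\eff}$ itself jumps, so the normal derivative $\partial_{n_\Om}u$ is discontinuous and $\Vx\cdot\nabla u\notin H^1(G)$. I would therefore carry out the Rellich identity piecewise at the $H^{1+s}_{pw}$-level provided by Proposition \ref{prop:regularity}, combining the two interface contributions from either side of $\partial\Om$ through the transmission condition $[a_{\eff}\nabla u\cdot n_\Om]=0$; the sign analysis then dictates exactly the two hypotheses $\Vx\cdot n_\Om\geq 0$ and $a_{\eff}|_\Om - I\geq 0$. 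A secondary difficulty is that $\Re(\mu_{\eff})$ has no fixed sign, so no $L^2(\Om)$-coercivity can be extracted from its real part; all such dissipation must come from $\Im(\mu_{\eff})\geq Ck^{-2}$, and the loss of $k^2$ incurred in the first step is precisely what drives the final stability exponent $q=3$.
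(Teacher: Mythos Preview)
Your strategy is genuinely different from the paper's. The authors explicitly remark that ``a direct application of the Rellich--Morawetz identities \ldots\ is not possible due to jumps in the gradient of the solution over the interface'' and instead prove the result in two stages: first a stability estimate (Proposition~\ref{prop:stabilitylipschitz}) for \emph{Lipschitz} matrix coefficients $a$ satisfying ``$Da\cdot x$ is negative semi-definite'', using the multiplier $\psi=x\cdot\nabla u$ (admissible because $u\in H^2(G)$ globally for smooth $a$); then an approximation step in which $a_{\eff}$ is replaced by a smooth $a_\eta$ with $\|a_\eta-a_{\eff}\|_{L^p}\le\eta$, the difference $u-u_\eta$ is controlled through the inf-sup constant $\sim k^{-4}$ of $B_\eta$ and the $H^{1+s}$-regularity of $u$ (Proposition~\ref{prop:regularity}), and $\eta=O(k^{-5})$ is chosen to absorb the perturbation. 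The term $k^{-1}\|g\|_{H^{1/2}(\partial G)}$ in the final estimate comes precisely from this approximation step (via the regularity bound \eqref{eq:regularityu}), \emph{not} from any tangential integration along $\partial G$ as you suggest; if your direct route succeeds it should produce an estimate purely in $\|g\|_{\partial G}$, which would in fact be sharper.

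Your piecewise Rellich approach is a legitimate alternative and has been carried out for scalar transmission problems, but the interface contribution you write is not correct. The expression $\int_{\partial\Om}(x\cdot n_\Om)(a_{\eff}|_\Om-I)\nabla u\cdot\nabla u^*$ is ill-defined because $\nabla u$ is two-valued on $\partial\Om$; moreover the two interface pieces (one from the divergence identity, one from the integration by parts against $x\cdot\nabla u$) do not combine into that form. The correct computation requires the normal/tangential decomposition: with $q:=a_{\eff}\nabla u\cdot n_\Om$ (continuous) and $p:=\partial_\tau u$ (continuous), and writing $a_{\eff}|_\Om$ in the $(n,\tau)$ basis with entries $\alpha,\gamma,\delta$, the combined interface term becomes
\[
\int_{\partial\Om}(x\cdot n_\Om)\Bigl[\bigl(1-\tfrac{1}{\alpha}\bigr)|q|^2+\bigl(\tfrac{\det a_{\eff}|_\Om}{\alpha}-1\bigr)|p|^2+\tfrac{2\gamma}{\alpha}\Re(qp^*)\Bigr]\,d\si,
\]
and one must then verify that the resulting quadratic form in $(q,p)$ is positive semi-definite under the hypothesis $a_{\eff}|_\Om\ge I$. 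This is the genuine work your sketch glosses over. The sign analysis you state (``dictates exactly the two hypotheses'') is morally right, but the algebra is considerably more delicate for matrix coefficients than your one-line summary suggests.
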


The geometrical assumption \eqref{eq:geometry} is the common assumption for scattering problems, see \cite{EM12helmholtz, Het07helmholtz, MS14signindefinite}. It can, for example, be fulfilled if $\Om$ is convex (and w.l.o.g.\ $0\in \Om$) and $G$ is chosen appropriately. 
The assumption on $a_{\eff}$ in fact is an assumption on $\ep_e$ and can be fulfilled for appropriate choices of material inside and outside the scatterer.
Analytically, this assumption can be traced back to the assumption that ``$Da\cdot x$ is negative semi-definite'' for Lipschitz continuous $a$ in Proposition \ref{prop:stabilitylipschitz}. In order to obtain that proposition, a weaker condition on the Lipschitz constant of $a$ would be sufficient, but then the constant in the stability estimate would depend on the Lipschitz constant of $a$, which blows up in the approximation of $a_{\eff}$.
We emphasize that a similar condition on the derivative of the diffusion coefficient and/or its Lipschitz constant has also been imposed in the scalar case in \cite{BGP15hethelmholtzLOD}. 

In the literature, most stability results for Helmholtz problems have been obtained in the case of constant coefficients, see e.g.\ \cite{BSW16helmholtz, EM12helmholtz, Het07helmholtz, Mel95gFEM, MS11helmholtz, MS14signindefinite}.
Only recently scalar-valued, Lipschitz continuous real-valued heterogeneous coefficients have been studied in \cite{BGP15hethelmholtzLOD}.
All these works have obtained the stability estimate with $q=0$ under the same geometry assumption \eqref{eq:geometry} as here. 
Our setting exhibits three new challenges for the stability analysis: A discontinuous, namely piece-wise constant, diffusion coefficient, a partly complex parameter $\mu$ and the fact that the diffusion coefficient $a$ is matrix-valued.
The second aspect introduces the worse dependence on $k$ in the stability estimate, as explained after Proposition \ref{prop:stabilitylipschitz}.
There, we also discuss how the lower bound on the imaginary part of $\mu$ influences the stability estimate.

Under the assumption of polynomial stability, the (final) stability and regularity estimates for the two-scale equation are deduced. A bound on the inf-sup-constant of the corresponding sesquilinear form is obtained similar to \cite{Het07helmholtz, Mel95gFEM, P15LODhelmholtz}.

\begin{proposition}
\label{prop:infsupconst}
If Assumption \ref{asspt:polynomialstable} is satisfied, the following holds:
\begin{enumerate}
\item The two-scale solution  satisfies 
\[\|(u, u_1, u_2)\|_e\leq C_{\stab, e}\,k^q(\|f\|_G+\|g\|_{H^{1/2}(\partial G)})\]
for $C_{\stab,e}:=C_{\stab, 0}(1+C_{\stab, 1}+C_{\stab, 2})$.
\item The regularity estimate for $u$ is
\[\|u\|_{H^{1+s}_{pw}(G)}\leq C_{\reg, 0}\,k^{q+1}(\|f\|_G+\|g\|_{H^{1/2}(\partial G)}).\]
\item The inf-sup-constants of $B_{\eff}$ and $\CB$ can be bounded below as follows
\begin{align}
\label{eq:infsupBeff}
\inf_{v\in H^1(G)}\sup_{\psi\in H^1(G)}\frac{\Re B_{\eff}(v, \psi)}{\|v\|_{H^1(G)}\|\psi\|_{H^1(G)}}&\geq C_{\inf, \eff} k^{-(q+1)},\\
\label{eq:infsuptwosc}
\inf_{\Vv\in \CH}\sup_{\Vpsi\in\CH}\frac{\Re \CB(\Vv, \Vpsi)}{\|\Vv\|_e\|\Vpsi\|_e}&\geq C_{\inf, e} k^{-(q+1)}
\end{align}
with $C_{\inf, \eff}:=\min\{ \alpha, C_\mu\} (k_0^{-(q+1)}+C_{\stab, 0})^{-1}$, where $\alpha$ denotes the ellipticity constant of $a_{\eff}$, and $C_{\inf, e}:=\min\{C_{\min}, 1\}(k_0^{-(q+1)}+C_{\stab, e})^{-1}$.
\end{enumerate}
\end{proposition}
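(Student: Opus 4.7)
Parts 1 and 2 follow by combining previous results. For Part 1, Assumption \ref{asspt:polynomialstable} gives $\|u\|_{1,k,G}\le C_{\stab,0}k^q(\|f\|_G+\|g\|_{H^{1/2}(\partial G)})$; combined with Proposition \ref{prop:stabilitycell}, which controls the two correctors $u_1,u_2$ by multiples of $\|u\|_{1,k,G}$, and the equivalence of $\|\cdot\|_e$ with the $k$-dependent norm $\|\cdot\|_{k,\CH}$ from Lemma \ref{lem:equivenergynorm}, the bound on $\|(u,u_1,u_2)\|_e$ is immediate with the stated constant $C_{\stab,e}=C_{\stab,0}(1+C_{\stab,1}+C_{\stab,2})$. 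Part 2 is obtained by inserting the same $k^q$-bound on $\|u\|_{1,k,G}$ into the regularity estimate \eqref{eq:regularityu} of Proposition \ref{prop:regularity}, which raises the power of $k$ by exactly one.

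For the inf-sup bound \eqref{eq:infsupBeff} I would use a Schatz-type duality argument. Given $v\in H^1(G)$, let $z\in H^1(G)$ solve the adjoint problem $B_{\eff}(\phi,z)=2C_\mu k^2(\phi,v)_G$ for all $\phi\in H^1(G)$. The adjoint sesquilinear form differs from $\overline{B_{\eff}}$ only in the sign of the boundary radiation term, so Assumption \ref{asspt:polynomialstable} applies verbatim (by conjugation symmetry of the stability proof) and yields $\|z\|_{1,k,G}\le 2C_\mu C_{\stab,0}k^{q+1}\|v\|_{1,k,G}$, after using $\|v\|_G\le k^{-1}\|v\|_{1,k,G}$. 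I then take the test function $\psi:=v+z$. From \eqref{eq:mueffupper} one has $\Re B_{\eff}(v,v)\ge\alpha\|\nabla v\|_G^2-C_\mu k^2\|v\|_G^2$, where $\alpha$ is the ellipticity constant of $a_{\eff}$, while the construction of $z$ gives $B_{\eff}(v,z)=2C_\mu k^2\|v\|_G^2\in\rz$. Combining these relations,
\begin{equation*}
\Re B_{\eff}(v,\psi)\ge \alpha\|\nabla v\|_G^2+C_\mu k^2\|v\|_G^2\ge\min\{\alpha,C_\mu\}\,\|v\|_{1,k,G}^2.
\end{equation*}
Together with $\|\psi\|_{1,k,G}\le k^{q+1}(k_0^{-(q+1)}+2C_\mu C_{\stab,0})\|v\|_{1,k,G}$ and the $k_0$-dependent equivalence of $\|\cdot\|_{1,k,G}$ and $\|\cdot\|_{H^1(G)}$, this produces \eqref{eq:infsupBeff} with the stated form of $C_{\inf,\eff}$.

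For \eqref{eq:infsuptwosc} I would replicate this pattern in $\CH$. Given $\Vv=(v,v_1,v_2)\in\CH$, let $\VZ\in\CH$ solve $\CB(\Vw,\VZ)=4k^2(w+\chi_D w_2,v+\chi_D v_2)_{G\times Y}$ for all $\Vw\in\CH$, and pick $\Vpsi:=\Vv+\VZ$. Lemma \ref{lem:gaarding} combined with the defining identity evaluated at $\Vw=\Vv$ gives
\begin{equation*}
\Re\CB(\Vv,\Vpsi)\ge C_{\min}\|\Vv\|_e^2-2k^2\|v+\chi_D v_2\|_{G\times Y}^2+4k^2\|v+\chi_D v_2\|_{G\times Y}^2\ge C_{\min}\|\Vv\|_e^2.
\end{equation*}
The main obstacle is to prove $\|\VZ\|_e\le Ck^{q+1}\|\Vv\|_e$: unlike in the forward problem, the right-hand side $F(\Vw):=4k^2(w+\chi_D w_2,v+\chi_D v_2)_{G\times Y}$ does not vanish on test vectors of the form $(0,0,\psi_2)$, so the cell problem for the corrector $z_2$ picks up an extra source of size $k^2\|v+\chi_D v_2\|$. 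I would isolate this by testing $\CB(\Vw,\VZ)=F(\Vw)$ first with $(0,\psi_1,0)$ and then with $(0,0,\psi_2)$ to express $z_1$ and $z_2$ through modified cell problems, substitute these back into the macroscopic reduction of Theorem \ref{thm:twosccellandmacro}, and thereby reduce the adjoint two-scale problem to an effective adjoint equation for $z$ with $L^2(G)$ data of order $O(k\|\Vv\|_e)$. Assumption \ref{asspt:polynomialstable} then bounds $\|z\|_{1,k,G}\le Ck^{q+1}\|\Vv\|_e$, and Part 1 of this proposition applied to the adjoint lifts this to $\|\VZ\|_e\le Ck^{q+1}\|\Vv\|_e$. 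Division by $\|\Vv\|_e\|\Vpsi\|_e$ in the displayed inequality finally delivers \eqref{eq:infsuptwosc} with $C_{\inf,e}=\min\{C_{\min},1\}(k_0^{-(q+1)}+C_{\stab,e})^{-1}$.
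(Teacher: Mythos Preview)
Your argument is correct and follows exactly the Schatz-type duality technique the paper invokes (it cites \cite{Het07helmholtz, Mel95gFEM, P15LODhelmholtz} without spelling out a proof, and the same construction reappears in the proof of Theorem~\ref{thm:infsupdiscr}); Parts~1--2 are immediate combinations as you write, and your choice of $\psi=v+z$ (resp.\ $\Vpsi=\Vv+\VZ$) together with the G{\aa}rding inequality is the intended mechanism. You are in fact more careful than the paper about the two-scale adjoint: the paper uses the factor~$2$ rather than~$4$ and simply asserts $\|\VZ\|_e\le 2C_{\stab,e}k^{q+1}\|\Vv\|_e$ by appeal to the cell-problem stability and Assumption~\ref{asspt:polynomialstable}, whereas you correctly note that the source in the $\psi_2$-slot forces one to rerun the decoupling of Theorem~\ref{thm:twosccellandmacro} with an extra $L^2$-term before the macroscopic stability can be applied.
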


\section{The Heterogeneous Multiscale Method}
\label{sec:HMM}
As explained in the introduction, a direct discretization of the heterogeneous problem \eqref{eq:weakHelmholtz} is infeasible due to the necessary small grid mesh width resolving all inclusions.
The idea of the Heterogeneous Multiscale Method is to imitate the homogenization process and to thereby provide a method based on grids independent of the finescale parameter $\delta$. 
In this paper, we introduce the HMM as a direct discretization of the two-scale equation \eqref{eq:twoscaleeq}, see \cite{Ohl05HMM} for the original idea for elliptic diffusion problems. 
This point of view is vital for the numerical analysis in Section \ref{sec:quasiopt} since ideas and procedures developed for ``normal'' Helmholtz problems can be easily transferred.
However, we will also shortly explain below how this direct discretization can be decoupled into macroscopic and microscopic computations in the fashion of the HMM as originally presented in \cite{EE03hmm, EE05hmm}. 

In this and the next section, we assume that $D$ and $\Om$ are polygonally bounded (in contrast to the $C^2$ boundaries in the analytic sections).
The reason is that the $C^2$ boundaries can be approximated by a series of more and more fitting polygonal boundaries. 
This procedure of boundary approximation results in non-conforming methods, i.e.\ the discrete function spaces are no subspaces of the analytic ones.
We avoid this difficulty in our numerical analysis by assuming polygonally bounded domains by now. The new assumption reduces the possible higher regularity of solutions as discussed in Subsection \ref{subsec:analysis}. However, we can always obtain the maximal regularity in the limit of polygonal approximation of $C^2$ boundaries, which we have in mind as application case.

Denote by $\CT_H=\{ T_j|j\in J\}$ and $\CT_h=\{S_k|k\in I\}$ conforming and shape regular triangulations of $G$ and $Y$, respectively. Additionally, we assume that $\CT_H$ resolves the partition into $\Om$ and $G\setminus \overline{\Om}$ and that $\CT_h$ resolves the partition of $Y$ into $D$ and $Y^*$ and is periodic in the sense that it can be wrapped to a regular triangulation of the torus (without hanging nodes). 
We define the local mesh sizes $H_j:=\diam(T_j)$ and $h_k:=\diam(S_k)$ and the global mesh sizes $H:=\max_{j\in J}H_j$ and $h:=\max_{k\in I}h_k$.
Finally, the discrete function spaces $V_H^1\subset H^1(G)$, $\widetilde{V}_h^1((Y^*)_j^\de)\subset  H^1_{\sharp, 0}((Y^*)_j^\de)$, and $V_h^1(D_j^\de)\subset H^1_0(D_j^\de)_\sharp$ are defined as 
\begin{align*}
V_H^1&:=\{v_H\in H^1(G)|v_H|_T\in \pz^1\quad \forall T\in \CT_H\}\\
\widetilde{V}_h^1((Y^*)_j^\de)&:=\{v_h\in H^1_{\sharp, 0}((Y^*)_j^\de)|v_h|_S\in \pz^1 \quad \forall S\in \CT_h((Y^*)_j^\de)\}\\
V_h^1(D_j^\de)&:=\{v_h\in H^1_0(D_j^\de)_\sharp|v_h|_S\in \pz^1 \quad \forall S\in \CT_h(D_j^\de)\},
\end{align*}
where $\pz^1$ are the polynomials of maximal degree $1$.

\begin{definition}
\label{def:hmmdiscrtwosc}
The discrete two-scale solution 
\[(u_H, u_{h, 1}, u_{h,2})\in V_H^1\times L^2(\Om; \widetilde{V}_h^1(Y^*))\times L^2(\Om;V_h^1(D))\]
is defined as the solution of
\begin{align}
\label{eq:discrtwosc}
\CB((u_H, u_{h,1}, u_{h,2}), (\psi_H, \psi_{h,1}, \psi_{h,2}))&=\int_{\partial G}g\,\psi_H^*\, d\si\\
\nonumber
&\hspace{-103pt} \forall (\psi_H, \psi_{h,1}, \psi_{h,2})\in V_H^1\times L^2(\Om; \widetilde{V}_h^1(Y^*))\times L^2(\Om; V_h^1(D))
\end{align}
with the two-scale sesquilinear form $\CB$ defined in Theorem \ref{thm:twosc}.
\end{definition}
In order to evaluate the integrals over $G$ in $\CB$, one introduces quadrature rules, which are exact for the given ansatz and test spaces.
In our case of piecewise linear functions, it suffices to choose the one-point rule $\{|T_j|, x_j\}$ with the barycenter $x_j$ for the gradient part and a second order quadrature rule $Q_j^{(2)}:=\{q_l, x_l\}_l$ with $l=1,2,3$ for the identity part.
As a consequence, the functions $u_{h,1}$ and $u_{h,2}$ will also be discretized in their part depending on the macroscopic variable $x$:
In fact, one has $u_{h,1}\in S_H^0(\Om;\widetilde{V}_h^1(Y^*))$ and $u_{h,2}\in S_H^1(\Om; V_h^1(D))$.
Here, the space of discontinuous, piecewiese $p$-polynomial (w.r.t.\ $x$) discrete functions is defined as
\begin{align*}
S_H^p(\Om; X_h)&:=\{v_h\in L^2(\Om; X)|\,v_h(\cdot, y)|_{T_j}\in\pz^p\; \forall j\in J, y\in Y; v_h(x, \cdot)\in X_h\;\forall x\in \Om\},
\end{align*}
for any conforming finite element space $X_h\subset X$.
Note that $u_{h,2}$ is a piecewise $x$-linear discrete function, since $Q^{(2)}$ consists of $3$ quadrature points on each triangle.

The functions $u_{h,1}$ and $u_{h,2}$ are the discrete counterparts of the analytical correctors $u_1$ and $u_2$.
They are correctors to the macroscopic discrete function $u_H$ and solve discretized cell problems.
These cell problems, posed in the unit square $Y$, can be transferred back to $\delta$-scaled and shifted unit squares $Y_j^\delta=x_j+\delta Y$, where $x_j$ is a  macroscopic quadrature point. 
This finally gives an equivalent formulation of \eqref{eq:discrtwosc} in the form of a (traditional) HMM.
The formulation using a macroscopic sesquilinear form with local cell reconstructions is used in practical implementations.
We emphasize that the presented HMM also works for locally periodic $\varepsilon^{-1}$ depending on $x$ and $y$.
The HMM and its interpretation as discretization of a fully coupled two-scale equation can even be applied to non-periodic problems, as demonstrated in \cite{HO15hmmmonotone}.

\section{Quasi-optimality of the HMM}
\label{sec:quasiopt}
Based on the definition of the HMM in Definition \ref{def:hmmdiscrtwosc}, we analyze its quasi-optimality in Theorem \ref{thm:infsupdiscr}. This quasi-optimality is a kind of C{\'e}a lemma for indefinite sesquilinear forms and directly leads to a priori estimates.

All estimates will be derived in the ``two-scale energy norm'' \eqref{eq:errornorm}.
Let us furthermore define the error terms $e_0:=u-u_H$, $e_1:=u_1-u_{h,1}$, and $e_2:=u_2-u_{h,2}$. We will only estimate these errors and leave the modeling error introduced by homogenization apart.
Recall the abbreviation $\CH:=H^1(G)\times L^2(\Om; H^1_{\sharp,0}(Y^*))\times L^2(\Om; H^1_0(D)_\sharp)$. In a similar short form we write $\VV_{H,h}:=V_H^1\times L^2(\Om; \widetilde{V}_h^1(Y^*))\times L^2(\Om; V_h^1(D))$.

We recall that the finite element function space $\VV_{H,h}$ has the following approximation property: There is $C_{\appr}$ such that for all $\halb<s\leq 1$  and given $(v,v_1,v_2)\in H^{1+s}_{pw}(G)\times L^2(\Om; H^{1+s}(Y^*))\times L^2(\Om; H^{1+s}(D))$  it holds
\begin{equation}
\label{eq:approxvHh}
\begin{aligned}
(\|v-v_H\|_G+H\|\nabla(v-v_H)\|_G)&\leq C_{\appr} H^{1+s}|v|_{H^{1+s}_{pw}(G)},\\
(\|v_1-v_{h,1}\|_{\Om\times Y^*}+h\|\nabla_y(v_1-v_{h,1})\|_{\Om\times Y^*})&\leq C_{\appr} h^{1+s}|v_1|_{L^2(\Om; H^{1+s}(Y^*))},\\
(\|v_2-v_{h,2}\|_{\Om\times D}+h\|\nabla_y(v_2-v_{h,2})\|_{\Om\times D})&\leq C_{\appr} h^{1+s}|v_2|_{L^2(\Om;H^{1+s}(D))}
\end{aligned}
\end{equation}
for all $\Vv_{H,h}:=(v_H, v_{h,1}, v_{h,2})\in \VV_{H,h}$.
Note that the regularity coefficient $s$ does not necessarily have to be the same in all three estimates.

In the $h$-version of the Finite Element method we consider in this paper, the meshes $\CT_H$ and $\CT_h$ are refined (thus decreasing $H$ and $h$) in order to obtain a better approximation. 
Hence, we introduce constants $H_{\max}>0$ and $h_{\max}>0$ such that $H\leq H_{\max}$ and $h\leq h_{\max}$ for all considered grids.

\begin{theorem}[Discrete inf-sup-stability and quasi-optimality]
\label{thm:infsupdiscr}
Let Assumption \ref{asspt:polynomialstable} be satisfied and let $s(\Om, G)$, $s(Y^*)$, and $s(D)$ be the (higher) regularity exponents from Proposition \ref{prop:regularity}. Fix $(s_0, s_1, s_2)$ with $0<s_0<s(\Om, G)$, $0<s_1<s(Y^*)$, $0<s_2<s(D)$.
If the wave number $k$ and the mesh widths $H$, $h$ are coupled by
\begin{equation}
\label{eq:resolutionasspt}
\begin{aligned}
k^{q+2}H^{s_0}&\leq -\frac{k_0^{q+1}}{2H_{\max}^{1-s_0}}+\sqrt{\frac{k_0^{q+1}}{H_{\max}^{1-s_0}}\Bigl(\frac{C_{\min}}{12C_BC_{\appr}C_{\reg,0}}+\frac{k_0^{q+1}}{4H_{\max}^{1-s_0}}\Bigr)},\\
k^{q+1}h^{s_1}&\leq \frac{C_{\min}}{12C_BC_{\appr}C_{\reg, 1}C_{\stab, e}},\\
k^{q+2}h^{s_2}&\leq -\frac{k_0^{q+1}}{2h_{\max}^{1-s_2}}+\sqrt{\frac{k_0^{q+1}}{h_{\max}^{1-s_2}}\Bigl(\frac{C_{\min}}{12C_BC_{\appr}C_{\reg,2}C_{\stab, e}}+\frac{k_0^{q+1}}{4h_{\max}^{1-s_2}}\Bigr)},
\end{aligned}
\end{equation}
then
\begin{equation}
\label{eq:discrinfsup}
\inf_{\Vv_{H, h}\in \VV_{H,h}}\sup_{\Vpsi_{H, h} \in \VV_{H,h}}\frac{\Re\CB(\Vv_{H, h}, \Vpsi_{H, h})}{\|\Vv_{H, h}\|_e\, \|\Vpsi_{H, h}\|_e}\geq \frac{C_{\HMM}}{k^{q+1}}
\end{equation}
with $C_{\HMM}:=\frac{C_{\min}}{2}(k_0^{-(q+1)}(1+\frac{C_{\min}}{2C_B})+C_{\stab, e})^{-1}$ and the error between the two-scale solution and the HMM-approximation satisfies
\begin{equation}
\label{eq:hmmapriori}
\|(e_0, e_1, e_2)\|_e  \leq \frac{2C_B}{C_{\min}} \inf_{\Vv_H\in \VV_{H, h}}\!\|\Vu-\Vv_H\|_e\leq C((H^{s_0}+h^{s_2})k^{q+1}+k^qh^{s_1})\|g\|_{H^{1/2}(\partial G)}.
\end{equation}
\end{theorem}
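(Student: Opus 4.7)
The plan is to follow the classical Schatz/Melenk--Sauter duality argument adapted to the two-scale setting, exploiting that the continuous problem is inf-sup stable with constant $C_{\inf,e}k^{-(q+1)}$ (Proposition \ref{prop:infsupconst}) and that $\CB$ satisfies a G{\aa}rding inequality (Lemma \ref{lem:gaarding}). The discrete inf-sup estimate \eqref{eq:discrinfsup} is the hard core of the theorem; once it is established, the quasi-optimality in \eqref{eq:hmmapriori} follows by a standard C{\'e}a-type argument, and the a priori bound drops out by inserting the approximation estimates \eqref{eq:approxvHh} together with the regularity bounds of Propositions \ref{prop:stabilitycell} and \ref{prop:infsupconst}.

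To prove \eqref{eq:discrinfsup}, fix $\Vv_{H,h}=(v_H,v_{h,1},v_{h,2})\in\VV_{H,h}$ and introduce the dual problem: let $\Vz=(z,z_1,z_2)\in\CH$ solve
\[
\CB(\Vpsi,\Vz)=2k^2\int_G\int_Y(\psi+\chi_D\psi_2)(v_H+\chi_D v_{h,2})^*\,dy\,dx\qquad\forall\,\Vpsi\in\CH.
\]
Since the adjoint of $B_{\eff}$ falls in the same framework as the primal one, Proposition \ref{prop:infsupconst} and the regularity result of Proposition \ref{prop:regularity} yield
\[
\|\Vz\|_e\le C_{\stab,e}\,k^{q+1}\cdot 2k\|v_H+\chi_Dv_{h,2}\|_{G\times Y},\qquad \|z\|_{H^{1+s_0}_{pw}(G)}\le C_{\reg,0}k^{q+2}\|v_H+\chi_Dv_{h,2}\|_{G\times Y},
\]
and analogous estimates on $z_1,z_2$ via Proposition \ref{prop:stabilitycell}. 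Let $\Vz_{H,h}\in\VV_{H,h}$ be the componentwise best approximation; combining \eqref{eq:approxvHh} with the above regularity gives
\[
\|\Vz-\Vz_{H,h}\|_e\le C_{\appr}\bigl(C_{\reg,0}H^{s_0}k^{q+2}+C_{\reg,1}C_{\stab,e}\,h^{s_1}k^{q+1}+C_{\reg,2}C_{\stab,e}\,h^{s_2}k^{q+2}\bigr)\|v_H+\chi_Dv_{h,2}\|_{G\times Y}.
\]
Now choose the test function $\Vpsi_{H,h}:=\Vv_{H,h}+\Vz_{H,h}$. Using the G{\aa}rding inequality, the identity $\CB(\Vv_{H,h},\Vz)=2k^2\|v_H+\chi_D v_{h,2}\|_{G\times Y}^2$, and continuity of $\CB$ yields
\[
\Re\CB(\Vv_{H,h},\Vpsi_{H,h})\ge C_{\min}\|\Vv_{H,h}\|_e^2-C_B\|\Vv_{H,h}\|_e\|\Vz-\Vz_{H,h}\|_e.
\]
The resolution conditions \eqref{eq:resolutionasspt} are engineered precisely so that each of the three error contributions in $\|\Vz-\Vz_{H,h}\|_e$ is bounded by $\tfrac{C_{\min}}{6C_B}\|\Vv_{H,h}\|_e$; summing them and using $\|\Vpsi_{H,h}\|_e\le \|\Vv_{H,h}\|_e+\|\Vz\|_e\le (1+\tfrac{C_{\min}}{2C_B}k_0^{-(q+1)}+C_{\stab,e}k^{q+1})\|\Vv_{H,h}\|_e$ after bounding $\|v_H+\chi_Dv_{h,2}\|_{G\times Y}$ by $k^{-1}\|\Vv_{H,h}\|_e$ gives \eqref{eq:discrinfsup} with the claimed constant $C_{\HMM}$.

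For the quasi-optimality, Galerkin orthogonality $\CB(\Vu-\Vu_{H,h},\Vpsi_{H,h})=0$ combined with continuity immediately yields $\|\Vu_{H,h}-\Vv_{H,h}\|_e\le (C_B/C_{\min})\|\Vu-\Vv_{H,h}\|_e$ for any $\Vv_{H,h}\in\VV_{H,h}$, and the triangle inequality gives the first inequality of \eqref{eq:hmmapriori}. The final a priori estimate follows by taking $\Vv_{H,h}$ to be the best approximation of $\Vu$, applying \eqref{eq:approxvHh}, and inserting the regularity bounds $\|u\|_{H^{1+s_0}_{pw}(G)}\le C k^{q+1}\|g\|_{H^{1/2}(\partial G)}$, $\|u_1\|_{L^2(\Om;H^{1+s_1}(Y^*))}\le C\|\nabla u\|\le Ck^q\|g\|_{H^{1/2}(\partial G)}$, and $\|u_2\|_{L^2(\Om;H^{1+s_2}(D))}\le Ck\|u\|_{1,k,G}\le Ck^{q+1}\|g\|_{H^{1/2}(\partial G)}$.

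The main obstacle is the careful bookkeeping of the $k$-powers in the dual problem: each component of $\Vz$ introduces a different mix of $k^{q+1}$ stability and an extra $k$-power stemming from the elliptic regularity shift (cf.\ Proposition \ref{prop:regularity}), and these must match exactly the three inequalities in \eqref{eq:resolutionasspt}. A secondary subtlety is that the $L^2$ mass term in the G{\aa}rding inequality is $\|v+\chi_D v_2\|_{G\times Y}$ rather than the component-wise mass norm, so the non-orthogonality discussed in Lemma \ref{lem:equivenergynorm} must be exploited when bounding $\|v_H+\chi_Dv_{h,2}\|_{G\times Y}$ by $k^{-1}\|\Vv_{H,h}\|_e$ in order not to lose a factor of $k$.
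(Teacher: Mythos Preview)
Your argument for the discrete inf-sup condition \eqref{eq:discrinfsup} is essentially the same as the paper's: the dual problem, the test function $\Vv_{H,h}+\Vz_{H,h}$, the G{\aa}rding inequality, and the resolution conditions are all used in the same way.  One omission: when you pass from \eqref{eq:approxvHh} to your bound on $\|\Vz-\Vz_{H,h}\|_e$ you drop the factors $(1+kH)$ and $(1+kh)$ that the energy norm forces (it contains a $k$-weighted $L^2$ part, so the approximation error in $\|\cdot\|_e$ picks up an extra term $kH^{1+s_0}$, etc.).  These factors are precisely why the first and third resolution conditions in \eqref{eq:resolutionasspt} have the quadratic/square-root form; with your linearised bound you could not match them.

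The quasi-optimality step, however, has a genuine gap.  You write that ``Galerkin orthogonality combined with continuity immediately yields $\|\Vu_{H,h}-\Vv_{H,h}\|_e\le (C_B/C_{\min})\|\Vu-\Vv_{H,h}\|_e$''.  This does \emph{not} follow: without coercivity the only tool available is the discrete inf-sup bound you just proved, and that gives
\[
\|\Vu_{H,h}-\Vv_{H,h}\|_e\le \frac{C_B\,k^{q+1}}{C_{\HMM}}\,\|\Vu-\Vv_{H,h}\|_e,
\]
i.e.\ a quasi-optimality constant that grows like $k^{q+1}$, not the $k$-independent constant $2C_B/C_{\min}$ asserted in \eqref{eq:hmmapriori}.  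The paper obtains the sharp constant by \emph{repeating} the duality argument, this time with the error as data: one solves the adjoint problem with right-hand side $k^2\!\int_G\int_Y(\psi+\chi_D\psi_2)(e_0^*+\chi_De_2^*)$, uses Galerkin orthogonality to write $k^2\|e_0+\chi_De_2\|^2_{G\times Y}=\CB(\Ve,\Vz-\Vz_{H,h})$, bounds this by continuity and approximation exactly as in the inf-sup step, and then invokes the G{\aa}rding inequality on $\Ve$ together with $\CB(\Ve,\Ve)=\CB(\Ve,\Vu-\Vv_{H,h})$.  The resolution conditions \eqref{eq:resolutionasspt} are used a second time here to absorb the duality term into the left-hand side, yielding the factor $2C_B/C_{\min}$ with no residual $k$-dependence.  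Your outline skips this second Aubin--Nitsche pass and therefore cannot deliver the stated constant.
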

The proof is postponed to Subsection \ref{subsec:proofquasiopt}.

\begin{corollary}
Under the maximal possible regularity $s_0=s_1=s_2=1$ as discussed in Subsection \ref{subsec:analysis}, the energy error  converges with rate $k^{q+1}(H+h)$ under the resolution assumption that $k^{q+2}(H+h)$ is sufficiently small.
\end{corollary}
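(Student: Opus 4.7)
The plan is to specialize Theorem~\ref{thm:infsupdiscr} to the maximal regularity $s_0 = s_1 = s_2 = 1$ (guaranteed by Proposition~\ref{prop:regularity} in the $C^2$ limit of polygonal boundary approximation) and then to collapse its three separate resolution conditions into the single condition stated in the corollary. First I would substitute $s_0 = s_1 = s_2 = 1$ into the right-hand sides of \eqref{eq:resolutionasspt}: the factors $H_{\max}^{1-s_0}$ and $h_{\max}^{1-s_2}$ then reduce to $1$, so each bound is a strictly positive constant depending only on the problem data and $k_0$. The three conditions thus take the simpler form
\[
k^{q+2} H \le \widetilde{C}_0, \qquad k^{q+1} h \le \widetilde{C}_1, \qquad k^{q+2} h \le \widetilde{C}_2.
\]

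Next I would note that the middle condition follows automatically from the third, because $k \ge k_0$ implies $k^{q+1} h \le k_0^{-1}\, k^{q+2} h$. Consequently, choosing the threshold on $k^{q+2}(H+h)$ to be $\min(\widetilde{C}_0,\widetilde{C}_2,k_0\widetilde{C}_1)$ enforces all three conditions simultaneously and makes precise the phrase ``sufficiently small'' in the statement of the corollary.

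Finally I would invoke the a priori estimate \eqref{eq:hmmapriori} with $s_0 = s_1 = s_2 = 1$, which yields
\[
\|(e_0, e_1, e_2)\|_e \le C\bigl((H+h)\,k^{q+1} + k^{q} h\bigr)\,\|g\|_{H^{1/2}(\partial G)},
\]
and then absorb the subdominant term via $k^{q} h \le k_0^{-1}\, k^{q+1}(H+h)$ to arrive at the claimed bound $C\, k^{q+1}(H+h)\,\|g\|_{H^{1/2}(\partial G)}$. The only ``obstacle'' is purely bookkeeping, namely keeping track of the $k_0$-dependent constants while collapsing the three resolution inequalities into one and while absorbing the lower-order term; no new analytic content beyond Theorem~\ref{thm:infsupdiscr} is required, which is consistent with the status of the statement as an immediate corollary.
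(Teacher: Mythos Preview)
Your proposal is correct and matches the paper's approach: the paper states this corollary without proof, treating it as an immediate specialization of Theorem~\ref{thm:infsupdiscr}, and the steps you spell out (setting $s_0=s_1=s_2=1$, collapsing the three resolution conditions using $k\geq k_0$, and absorbing the subdominant $k^q h$ term) are precisely the routine manipulations that justify this.
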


Dual problems can be used to estimate\hfill $\|(e_0, e_1, e_2)\|_{L^2}$\hfill by \hfill $C(k^{q+1}(H^{s_0}+h^{s_2})+k^qh^{s_1})\\ \|(e_0,e_1,e_2)\|_e$ as in the the proof of Theorem \ref{thm:infsupdiscr}. 
This is the classical Aubin-Nitsche argument to obtain higher convergence rates in the $L^2$-norm, for details see \cite{MS14helmholtzl2, MS11helmholtz} for classical Helmholtz problems. 

As it has already been remarked in \cite{HOV15maxwellHMM, Ohl05HMM}, the definition of the HMM as direct discretization of the two-scale equation, see \eqref{eq:discrtwosc}, is the crucial starting point for all kinds of error estimates and in particular, enables us to derive a posteriori error estimates.
 This can also be achieved for the setting considered here by adapting a posteriori error estimates for Helmholtz problems obtained e.g.\ in \cite{DS13helmholtzaposteriori, IB01helmholtzaposteriori} to the two-scale equation.

\smallskip

Under the regularity estimate from Assumption \ref{asspt:polynomialstable}, the resolution condition \eqref{eq:resolutionasspt} is optimal / unavoidable for standard finite element methods and the multiscale setting: 
As the second cell problem depends on $k$, it is natural that $h$ enters the condition \eqref{eq:resolutionasspt}. We emphasize that $h$ denotes the mesh width of the unit square mesh and is thus not coupled to $\delta$ in any way.
 Assuming now $q=0$, as it is the case for classical Helmholtz problems, we regain the usual condition ``$k^2(H+h)$ sufficiently small'', cf.\ e.g.\ \cite{EM12helmholtz, Het07helmholtz, Ihl98, Mel95gFEM, MS11helmholtz}, see also the early abstract discussion in \cite{Sch74galerkinindefinite}.
 This is also the resolution condition we experience in our numerical experiments in Section \ref{sec:numexperiment}.
Our explicit stability estimate in Theorem \ref{thm:stabilityeff} yields $q=3$ and thus, the resolution condition ``$k^5(H+h)$ small''.
This is a kind of ``worst case'' resolution condition: It is certainly sufficient for the a quasi-optimality and a priori error result presented above, but can well (as the numerical example indicates) be sub-optimal.
We emphasize that this gap between the optimal and worst-case resolution condition is no defect of the numerical method, but can be closed if better stability results in the spirit of Theorem \ref{thm:stabilityeff} are proved, which is outside the scope of our work.

As also supported by our numerical experiment, the HMM is much more efficient than a direct discretization of the heterogeneous Helmholtz problem \eqref{eq:weakHelmholtz}. 
In order to get an accurate solution, one needs a grid with mesh size $h_{\mbox{\tiny{ref}}}$ satisfying $h_{\mbox{\tiny{ref}}}<\de\ll 1$ from the multiscale point of view. On top of that, at least $k^2h_{\mbox{\tiny{ref}}}<C$ has to be satisfied to rule out pre-asymptotic effects. 
Note that the heterogeneous problem does not fulfill the assumptions for any available stability estimate, so that the resolution condition may even be worse.

Although the so-called pollution effect is not avoidable for the classical Helmholtz equation in dimension $d\geq2$ as shown in \cite{BS00pollutionhelmholtz}, much work in its reduction has been invested: 
Examples of the proposed methods are the $hp$-version of the finite element method \cite{EM12helmholtz, MS11helmholtz}, (hybridizable) discontinuous Galerkin methods \cite{CLX13HDGhelmholtz, GM11HDGhelmholtz}, or plane wave Trefftz methods \cite{HMP16surveytrefftz,HMP16pwdg, PPR15pwvem}, just to name a few.
Recently, it has been shown that the resolution condition can be relaxed to the natural assumption ``$kh$ sufficiently small'' by applying a Localized Orthogonal Decomposition (LOD) to the Hemholtz equation, see \cite{BGP15hethelmholtzLOD, GP15scatteringPG, P15LODhelmholtz}. 
The function space is decomposed into a coarse space, where the solution is sought, and a remainder space. The coarse space is spanned by pre-computable basis functions with local support, which include some information from the remainder space by the solution of localized correction problems.
The definition of the HMM as direct discretization of the two-scale equation makes it possible to apply an additional LOD, see \cite{OV16hmmlod2}.

\section{Main proofs}
\label{sec:proofs}
In this section the essential proofs of the properties of the effective parameters occurring in homogenization, the stability of the effective equation and the quasi-optimality of the HMM will be given.

\subsection{Proof of the properties of the effective parameters}
\label{subsec:homproof}
In this section we show the upper and lower bounds for the effective permeability $\mu_{\eff}$.
We also show the equivalence of the two formulations of $\mu_{\eff}$ obtained from Allaire \cite{All92twosc} and Bouchitt{\'e} and Felbacq \cite{BBF09hom3d}, respectively.

\begin{proof}[Proof of Proposition \ref{prop:effective}]
The characterization of $a_{\eff}$ is well-known and follows from the ellipticity of the corresponding cell problem \eqref{eq:cella}, see \cite{All92twosc} for similar cell problems.

Cell problem \eqref{eq:cellmu} is (uniformly) coercive because of $\Im(\varepsilon^{-1}_i)<0$.
The Lax-Milgram-Babu{\v{s}}ka theorem \cite{Bab70fem} now implies the unique solvability of the cell problem for $w$ with the stability estimate
\[\|w\|_{1,k,D}\leq C(\varepsilon_i, k_0, D)/k.\]
Combination with the representation of $\mu_{\eff}$ directly yields \eqref{eq:mueffupper}.

It is well-known that the eigenfunctions of the Laplace operator on $D$ with Dirichlet boundary conditions form an orthonormal basis of $L^2(D)$. The eigenvalues $\lambda_n$ are sorted as a positive, increasing sequence of real numbers. 
We have the representation $1=\sum_n\Bigl(\int_D \phi_n\Bigr)\phi_n $.
Writing $w=\sum_n\alpha_n\phi_n$ and inserting this into \eqref{eq:cellmu}, gives after a comparison of coefficients
\[w =\sum_n \Bigl(\frac{\ep_i}{\lambda_n-k^2\ep_i}\int_D\phi_n\Bigr)\phi_n 
\qquad \text{and hence,} \qquad
\mu_{\eff} = 1+\sum_n\frac{k^2\ep_i}{\lambda_n-k^2\ep_i}\Bigl(\int_D\phi_n\Bigr)^2,\]
see \cite{BF04homhelmholtz}. 
A similar computation for the full three-dimensional case is given in \cite[Appendix A]{LS15negindex}. 
Now we can deduce because of the positivity of $\Im(\ep_i)$ and of the eigenvalues that
\begin{align*}
\Im(\mu_{\eff})=\sum_n\frac{k^2\lambda_n\Im(\ep_i)}{|\lambda_n-k^2\ep_i|^2}\Bigl(\int_D\phi_n\Bigr)^2
\geq \frac{k^2\lambda_0\Im(\ep_i)}{|\lambda_0-k^2\ep_i|^2}\Bigl(\int_D\phi_0\Bigr)^2.
\end{align*}
The first eigenfunction of the Dirichlet Laplacian is zero-free, thus $(\int_D\phi_0)^2>0$. As we consider the high-frequency case, we can w.l.o.g.\ assume $\lambda_0\leq k^2|\ep_i|$ and then obtain $|\lambda_0-k^2\ep_i|^2\leq 2k^4|\ep_i|^2$. 
This finally gives
\[\Im(\mu_{\eff})\geq \frac{k^2\lambda_0\Im(\ep_i)}{2k^4|\ep_i|^2}\Bigl(\int_D\phi_0\Bigr)^2\geq \frac{C(\ep_i, D)}{k^2}>0.\]
\end{proof}

\subsection{Polynomial stability of the Helmholtz equation with discontinuous coefficients}
\label{subsec:stabilityproof}
In this section, we give a detailed proof of Theorem \ref{thm:stabilityeff}. 
We consider a Lipschitz continuous, matrix-valued diffusion coefficient $a$ with the partly complex-valued $\mu$ first. 
Then the discontinuity in $a_{\eff}$ is treated by a smoothing/approximation procedure. A direct application of the Rellich-Morawetz identities (see e.g.\ \cite[Section 2]{MS14signindefinite} and the references therein) is not possible due to jumps in the gradient of the solution over the interface.

\begin{proposition}
\label{prop:stabilitylipschitz}
Let $\Om$ and $G$ satisfy \eqref{eq:geometry}. Let $u$ be the unique solution to
\[B(u, \psi)=(f, \psi)_G+(g, \psi)_{\partial G}\]
for $f\in L^2(G)$ and $g\in L^2(\partial G)$, where $B$ is the sesquilinear form of \eqref{eq:effsesquiform} with $a_{\eff}$ replaced by $a$ and $\mu_{\eff}$ replaced by $\mu$ fulfilling the assumptions
\begin{itemize}
\item $a\in W^{1,\infty}(G, \mathbb{R}^{2\times 2})$ is symmetric, bounded and uniformly elliptic;
\item the matrix $Da\cdot x$ with $(Da\cdot x)_{ij}:=\sum_k x_k\,\partial_k a_{ij} $ is negative semi-definite;
\item $\mu\in L^\infty(G;\mathbb{C})$ is piecewise constant, namely  $\mu=\mu_2\in \rz_+$ in $G\setminus\overline{\Om}$ and $\mu=\mu_1\in \cz$ in $\Om$ with $\Im(\mu_1)>c_0>0$.
\end{itemize}
Then the following stability estimate holds
\begin{align*}
 \|u\|_{1,k,G}&\leq Ck^{1/2}(c_0^{-1/2}\!+1)\|g\|_{\partial G}+C\|f\|_G+C(c_0^{-1/2}\!+c_0^{-1})\|f\|_\Om\\*
 &\quad+\frac{C}{k}(1+c_0^{-1/2}+c_0^{-1})\|f\|_G+\frac{Ck}{c_0}\|f\|_{G\setminus\overline{\Om}},
\end{align*}
where the constants depend on the geometry, the upper bounds on $\mu$ and $a$, the ellipticity constant of $a$, and on $k_0$; but not on the Lipschitz constant of $a$ or any other constant involving the derivative of $a$.
\end{proposition}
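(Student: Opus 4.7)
My plan is to use a Rellich--Morawetz multiplier argument, applied \emph{piecewise} on $\Omega$ and on $\Omega_e := G\setminus\overline{\Omega}$, and to combine it with the information extracted by testing the equation against $u$ itself. The choice of multiplier is the classical $\psi = x\cdot\nabla u$ (complemented, if needed, by a small multiple of $u$), following the template of \cite{MS14signindefinite,Het07helmholtz}. The main novelty I need to handle is that $\nabla u$ jumps across $\partial\Omega$, so $x\cdot\nabla u$ is not globally in $H^1(G)$ and the multiplier can only be used separately on each subdomain.

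First, I would test the weak formulation with $\psi=u$. Since $\mu$ is real on $\Omega_e$ and has $\Im\mu = \Im\mu_1 > c_0$ on $\Omega$, taking $-\Im B(u,u)$ yields
\begin{equation*}
k^{2} c_{0}\,\|u\|_{\Omega}^{2} + k\,\|u\|_{\partial G}^{2} \;\le\; |(f,u)_{G}| + |(g,u)_{\partial G}|.
\end{equation*}
After Young's inequality with carefully chosen weights this gives the first $k$-dependent bounds on $\|u\|_{\Omega}$ and $\|u\|_{\partial G}$; the factors $c_0^{-1/2}$ and $c_0^{-1}$ appearing in the statement come from the division by $c_0$ here. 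The real part $\Re B(u,u)$ will later be used to close the $L^{2}$-norm of $u$ on $\Omega_{e}$ once the Dirichlet energy is under control.

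Next, I apply the Rellich--Morawetz identity with multiplier $x\cdot\nabla u$ on each of $\Omega$ and $\Omega_e$ separately. On each subdomain $u$ is regular enough by elliptic interior and boundary regularity to justify the computation (by a density argument on the smooth data, or on a pre-image under a regularization of $a$). On $\Omega_e$ I obtain the standard identity using $-\Delta u = k^{2}u + f$. On $\Omega$, expanding $\partial_{i}(x_{k}\partial_{k} u)$ and using the symmetry of $a$ gives, in $d=2$,
\begin{equation*}
2\Re\int_{\Omega} a\nabla u\cdot\overline{\nabla(x\cdot\nabla u)}\,dx
= \int_{\partial\Omega}(x\cdot n_\Omega)\,a\nabla u\cdot\overline{\nabla u}\,d\sigma
- \int_{\Omega}(Da\cdot x)\nabla u\cdot\overline{\nabla u}\,dx,
\end{equation*}
so the assumption that $Da\cdot x$ is negative semi-definite turns the Lipschitz-derivative term into a good sign and, crucially, removes any dependence on $\|a\|_{W^{1,\infty}}$ from the final constant. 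Adding the two identities, the interface contributions on $\partial\Omega$ from the two sides combine using continuity of $u$ and of the flux $a\nabla u\cdot n$; the jump of the tangential derivative $\nabla_{\!\tau} u$ multiplies $(x\cdot n_\Omega)(a^{\text{ext}}-a^{\text{int}})\nabla_{\!\tau} u\cdot\overline{\nabla_{\!\tau} u}$, which is controlled by the semi-definiteness assumption on $a_{\text{eff}}|_{\Omega_e}-a_{\text{eff}}|_\Omega$ combined with $x\cdot n_\Omega\ge 0$. The boundary contribution on $\partial G$ is handled by $x\cdot n_G\ge \gamma>0$ in the standard way, giving a positive multiple of $\gamma\|\nabla u\|_{\partial G}^{2}$ and a term $k^{2}\gamma\|u\|_{\partial G}^{2}$.

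The surviving volume term is $2\Re\int_{G}(k^{2}\mu u + f)\,\overline{x\cdot\nabla u}\,dx$. Splitting $\mu_1 = \Re\mu_1 + i\Im\mu_1$, the real part of $\mu_1 u\,\overline{x\cdot\nabla u}$ produces $\tfrac{1}{2}\Re(\mu_1)\,x\cdot\nabla|u|^{2}$ which, after integration by parts, absorbs into the boundary and $L^{2}$ terms already controlled. The imaginary part generates the only genuinely new contribution $-2\Im(\mu_1)\,\Im\int_{\Omega} u\,\overline{x\cdot\nabla u}\,dx$; I estimate this by Cauchy--Schwarz and Young using $\|u\|_{\Omega}^{2}\lesssim c_0^{-1}(\|f\|_{G}\|u\|_{G}+\|g\|_{\partial G}\|u\|_{\partial G})$ from the imaginary-part identity, which precisely explains where the factors $c_{0}^{-1/2}+c_{0}^{-1}$ and $k/c_{0}$ on $\Omega_e$ come from after an application of trace inequalities and absorption. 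The $f$-term is treated by Cauchy--Schwarz with an $\varepsilon$-split between $\|\nabla u\|_{G}$ and $k\|u\|_{G}$. A final Young absorption closes the estimate for $\|u\|_{1,k,G}$.

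\textbf{Main obstacle.} The crux is handling the interface $\partial\Omega$: the multiplier $x\cdot\nabla u$ is only piecewise $H^{1}$, so all identities must be derived subdomain-wise, and the interface terms produced by the jumps of $\nabla u$ must be combined using the transmission conditions together with $x\cdot n_\Omega\ge 0$ and the sign condition on $a_{\text{eff}}|_{\Omega_e}-a_{\text{eff}}|_\Omega$. The second delicate point is the complex $\mu_1$ term: it is not a priori small and can only be absorbed by using the $L^{2}(\Omega)$-bound coming from $\Im\mu_1\ge c_0$, which is what forces the degradation of the constants in powers of $c_0^{-1}$ and, once $c_0\sim k^{-2}$ is inserted in Theorem \ref{thm:stabilityeff}, the overall $k^{3}$ growth.
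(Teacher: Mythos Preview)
Your overall strategy---test with $\psi=u$ to extract the imaginary- and real-part identities, then apply the Rellich--Morawetz multiplier $\psi=x\cdot\nabla u$---is exactly the paper's. The difference is that you have misread the regularity situation. In this proposition $a\in W^{1,\infty}(G)$ is globally Lipschitz, so elliptic regularity on the convex domain $G$ gives $u\in H^2(G)$; consequently $\nabla u$ does \emph{not} jump across $\partial\Omega$, and $x\cdot\nabla u\in H^1(G)$ is an admissible global test function. The paper exploits this directly: the multiplier is applied once on all of $G$, the identity $2\Re\int_G a\nabla u\cdot\nabla(x\cdot\nabla u^*) = -\int_G (Da\cdot x)\nabla u\cdot\nabla u^* + \int_{\partial G}(x\cdot n)\,a\nabla u\cdot\nabla u^*$ is obtained with no interface terms from the principal part, and only the piecewise-constant $\mu$ produces a splitting (the $\mu_2$ piece is integrated by parts on $G\setminus\overline\Omega$, the $\mu_1$ piece on $\Omega$ is simply estimated by $k^2\|u\|_\Omega\|\nabla u\|_\Omega$).

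Your piecewise argument with interface jump terms involving $a^{\text{ext}}-a^{\text{int}}$ is therefore unnecessary here, and your invocation of the sign condition on $a_{\eff}|_{G\setminus\overline\Omega}-a_{\eff}|_\Omega$ is out of place: that hypothesis belongs to Theorem~\ref{thm:stabilityeff}, not to this proposition, and in the present Lipschitz setting those interface contributions are identically zero. The paper instead treats the genuinely discontinuous $a_{\eff}$ in Theorem~\ref{thm:stabilityeff} by a separate smoothing argument (approximate $a_{\eff}$ by $a_\eta\in C^\infty$, apply this proposition to $a_\eta$, and pass to the limit via the inf-sup bound), which is where the extra sign assumption is actually used. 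Your ``main obstacle'' is thus not an obstacle for Proposition~\ref{prop:stabilitylipschitz}; once you use $u\in H^2(G)$, your proof collapses to the paper's.
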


\begin{proof}
{\itshape First step:} With $\psi=u$ and considering the imaginary part, we obtain with H\"older and Young's inequality
\begin{align}
\label{subeq:imucase3}
k^2c_0\|u\|^2_\Om+k\|u\|^2_{\partial G}&\leq C \Bigl(\frac{1}{k}\|g\|^2_{\partial G}+\frac{1}{k^2 c_0}\|f\|^2_\Om+\|f\|_{G\setminus \overline{\Om}}\|u\|_{G\setminus \overline{\Om}}\Bigr).
\end{align}

{\itshape Second step:} With $\psi=u$ and considering the real part, we obtain due to the boundedness of $\mu$ and the uniform ellipticity of $a$
$$\|\nabla u\|^2_G\leq C\Bigl(k^2\|u\|^2_G+\frac{1}{2k^2}\|f\|^2_G+\frac{k^2}{2}\|u\|_G^2+\|g\|_{\partial G}\|u\|_{\partial G}\Bigr).$$
Inserting \eqref{subeq:imucase3} yields
\begin{align}
\label{subeq:realucase3}
\|\nabla u\|^2_G&\leq C\Bigl(k^2\|u\|^2_{G\setminus\overline{\Om}}+\frac{1}{k^2}\Bigl(1+\frac{1}{c_0^2}\Bigr)\|f\|_G+\frac{1}{k^2 c_0}\|f\|^2_\Om
+\frac{1}{k}\Bigl(\frac{1}{c_0}+1\Bigr)\|g\|^2_{\partial G}\Bigr).
\end{align}

{\itshape Third step:} It remains to estimate $\|u\|^2_{G\setminus \overline{\Om}}$. For this, we insert $\psi=x\cdot \nabla u$ and consider the real part. Note that $x\cdot \nabla u$ is an admissible test function because we have $u\in H^2(G)$ due to the convexity of $G$ and the smoothness of $a$, see \cite{GilTru}. We moreover use the identity $\partial_j(|w|^2)=2\Re(w \partial_j w^*)$. For the first term of the sesquilinear form we obtain
\begin{align*}
&\!\!\!\!\Re\int_Ga\nabla u\cdot \nabla(x\cdot \nabla u^*)\, dx\\
&=\Re\int_Ga\nabla u\cdot \nabla u^*+a\nabla u\cdot (D^2u^*) x\, dx\\
&=\int_Ga\nabla u\cdot \nabla u^*+\halb \nabla (a\nabla u\cdot \nabla u^*)\cdot x -\halb (Da\cdot x)\nabla u\cdot \nabla u^*\, dx\\*
&=-\halb \int_G(Da\cdot x)\nabla u\cdot \nabla u^*\, dx+\halb \int_{\partial G}a \nabla u\cdot \nabla u^*x\cdot n\, d\sigma,
\end{align*}
where in last equality we integrated by parts. As $Da\cdot x$ is negative semi-definite by the assumption, the first term is non-negative.

For the second part of the sesquilinear form we obtain 
\begin{align*}
&\!\!\!\!\Re\int_G k^2 \mu ux\cdot \nabla u^*\, dx\\*
&=\Re\int_\Om k^2\mu_1 ux\cdot \nabla u^*\, dx+\frac{\mu_2}{2}\int_{G\setminus \overline{\Om}}k^2 x\cdot\nabla|u|^2\, dx\\
&=\Re\int_\Om k^2\mu_1 ux\!\cdot\! \nabla u^*\, dx+\frac{\mu_2}{2} \int_{\partial(G\setminus \overline{\Om})}\!\!\!\!k^2|u|^2 x\!\cdot \!n\, d\sigma-\int_{G\setminus \overline{\Om}}\!k^2\mu_2 |u|^2\, dx.
\end{align*}
So for the test function $\psi=x\cdot \nabla u$ and the real part we deduce by combining the foregoing calculations
\begin{align*}
&\!\!\!\!\halb \int_{\partial G}a\nabla u\cdot\nabla u^* x\cdot n\, d\sigma+\int_{G\setminus \overline{\Om}}k^2\mu_2 |u|^2\, dx\\*
&\leq\halb \int_{\partial(G\setminus\overline{\Om})}\!\!\!k^2\mu_2|u|^2 x\cdot n\, d\sigma+\Re\Bigl(\int_\Om k^2\mu_1 ux\cdot \nabla u^*\, dx+\int_{\partial G}ikux\cdot \nabla u^*\, d\sigma\Bigr)\\*
&\quad+\Re\Bigl(\int_G fx\cdot \nabla u^*\, dx+\int_{\partial G}gx\cdot \nabla u^*\, d\sigma\Bigr).
\end{align*}
The assumption \eqref{eq:geometry} on $G$ and $\Om$ implies that the first term on the right-hand side can be bounded above by $Ck^2\|u\|_{\partial G}^2$. This yields after application of H\"older and Young inequalities
\[k^2\|u\|^2_{G\setminus \overline{\Om}}\leq C(k^2\|u\|_\Om\|\nabla u\|_\Om+k^2\|u\|^2_{\partial G}+\|g\|^2_{\partial G}+\|f\|_G\|\nabla u\|_G).\]
Inserting the estimates \eqref{subeq:imucase3} and \eqref{subeq:realucase3} into the estimate for $k^2\|u\|^2_{G\setminus \overline{\Om}}$ gives
\begin{align*}
k^2\|u\|^2_{G\setminus \overline{\Om}}&\leq C\Bigl(\|g\|^2_{\partial G}\!+\!\frac{1}{kc_0}\|f\|_\Om^2\!+\eta_1k^2\|u\|_{G\setminus\overline{\Om}}^2\!+\!\frac{1}{\eta_1}\|f\|^2_{G\setminus\overline{\Om}}+\frac{1}{\eta_2}\|f\|_G^2+\eta_2k^2\|u\|^2_{G\setminus\overline{\Om}}\\*
&\qquad\quad+\frac{\eta_2}{k^2}(1+c_0^{-2})\|f\|^2_G+\frac{\eta_2}{k^2c_0}\|f\|^2_\Om+\frac{\eta_2}{k}(1+c_0^{-1})\|g\|^2_{\partial G}+\frac{k^4}{\de_2}\|u\|_\Om^2\Bigr).
\end{align*}
Choose $\eta_1,\eta_2$ independent of $k$ such that $k^2\|u\|_{G\setminus \overline{\Om}}$ can be hidden on the left-hand side and insert once more \eqref{subeq:imucase3} for the last term on the right-hand side to obtain
\begin{align*}
k^2\|u\|^2_{G\setminus \overline{\Om}}&\leq C\Bigl(\|g\|^2_{\partial G}+\|f\|^2_G+\Bigl(\frac{1}{kc_0}+\frac{1}{k^2c_0}\Bigr)\|f\|^2_\Om+\Bigl(\frac{1}{k^2}+\frac{1}{k^2c_0^2}\Bigr)\|f\|^2_G\\*
&\qquad \quad+\Bigl(\frac{1}{k}+\frac{1}{kc_0}\Bigr)\|g\|^2_{\partial G}+\frac{k}{c_0}\|g\|^2_{\partial G}+\frac{1}{c_0^2}\|f\|^2_\Om\\*
&\qquad\quad+\eta_3k^2\|u\|^2_{G\setminus \overline{\Om}}+\frac{k^2}{\eta_3 c_0^2}\|f\|^2_{G\setminus\overline{\Om}}\Bigr).
\end{align*}
Choosing finally $\eta_3$ appropriately gives the desired estimate for $k^2\|u\|^2_{G\setminus \overline{\Om}}$ and combination with \eqref{subeq:imucase3} and \eqref{subeq:realucase3} finishes the proof.
\end{proof} 

If $c_0$ is independent from $k$, we obtain
\begin{align*}
\|u\|_{1,k,G}&\leq C(\|f\|_\Om+k\|f\|_{G\setminus\overline{\Om}}+k^{1/2}\|g\|_{\partial G}).
\end{align*}
On the other hand, if $c_0>k^{-2}$ as in the case of $\mu_{\eff}$ (see Proposition \ref{prop:effective}), we obtain
\begin{align*}
\|u\|_{1,k,G}&\leq C(k^2\|f\|_\Om+k^3\|f\|_{G\setminus\overline{\Om}}+k^{3/2}\|g\|_{\partial G}).
\end{align*}
The dependence of $c_0$ on $k$ contributes by a factor $k$ for $g$ and a factor $k^2$ for $f$.
However, even without this critical dependence of $c_0$ on $k$, the stability estimate is worse than the classical versions of about a factor $k$ for $f$ and $k^{1/2}$ for $g$. Looking into the proof, one can see that this is due to the difficult term $\int_\Om k^2\mu ux\cdot \nabla u$.

The presented proof can also be transferred (with minor adaptations) to the case where $\mu$ is a real constant and then yields the known stability of $k^0$. So this also contributes to the analysis of \cite{BGP15hethelmholtzLOD} by covering the case of matrix-valued $a$.

\begin{proof}[Proof of Theorem \ref{thm:stabilityeff}]
Because of the density of smooth functions in $L^p$ for $p\in [1,\infty)$, for every $\eta>0$ there exists $a_\eta\in C^\infty(\overline{G})$ such that $\|a_\eta-a\|_{L^p}\leq \eta$. Furthermore, $a_\eta$ can be chosen symmetric and uniformly elliptic with constants independent of $\eta$. \
Because of the additional assumption on $a_{\eff}$ and the geometric setting, the assumption ``$Da_\eta\cdot x$ is negative semi-definite'' can also be fulfilled for all $\eta$ small enough.
In the sequel, $C$ is a generic constant, independent of $k$ and $\eta$.

The solution $u_\eta$ to the Helmholtz problem with diffusion coefficient $a_\eta$ (and sesquilinear form $B_\eta$) satisfies according to the previous proposition
\[\|u_\eta\|_{1,k,G}\leq C(k^3\|f\|_{G\setminus\overline{\Om}}+k^2\|f\|_\Om+k^{3/2}\|g\|_{\partial G}).\]
$u-u_\eta$ satisfies $B_\eta(u-u_\eta, v)=\int_G(a_\eta-a)\nabla u\cdot\nabla v^*$ for all $v\in H^1(G)$. As the inf-sup-constant of $B_\eta$ is bounded below by $k^{-4}$, this gives
\[\|u-u_\eta\|_{1,k,G}\leq Ck^4\|(a_\eta-a)\nabla u\|_G.\]

By the H\"older inequality, we have $\|(a_\eta-a)\nabla u\|_G\leq C\|a_\eta-a\|_{L^p}\|\nabla u\|_{L^q}$ for all $p,q$ with $1/p+1/q=1/2$. Now choose $q$ such that $L^q\subset H^s$ for some $s\in (0,1/2]$ (e.g.\ $q=p=4$ or $q=8/3$, $p=8$). 
Because of $\|a_\eta-a\|_{L^p}\leq \eta$ and the estimate for the $H^s$-norm of $u$ (see Proposition \ref{prop:regularity}), we get
\[\|u-u_\eta\|_{1,k,G}\leq Ck^4\eta(k\|u\|_{1,k,G}+\|f\|_G+\|g\|_{H^{1/2}(\partial G)}).\]
Now choose $\eta=O(k^{-5})$ small enough. By the triangle inequality we finally obtain
\begin{align*}
\|u\|_{1,k,G}&\leq \|u-u_\eta\|_{1,k,G}+\|u_\eta\|_{1,k,G}\\
&\leq \halb\|u\|_{1,k,G}+Ck^{-1}(\|f\|_G+\|g\|_{H^{1/2}(\partial G)})\\*
&\quad+C(k^3\|f\|_{G\setminus \overline{\Om}}+k^2\|f\|_\Om+k^{3/2}\|g\|_{\partial G}),
\end{align*}
which gives the claim.
\end{proof}

\subsection{Proof of the quasi-optimality of the HMM}
\label{subsec:proofquasiopt}
In this section we give the proof of our central result, namely Theorem \ref{thm:infsupdiscr}.

\begin{proof}[Proof of Theorem \ref{thm:infsupdiscr}]

{\itshape Proof of the discrete inf-sup constant \eqref{eq:discrinfsup}}:
Let $\Vv_{H,h}:=(v_H, v_{h,1}, v_{h,2})\in \VV_{H,h}$ be given and let $\Vz:=(z, z_1, z_2)\in \mathcal{H}$ solve
\begin{align*}
\CB(\Vpsi, \Vz) &= 2k^2\int_G\int_Y(\psi+\chi_D \psi_2)(v_H^*+\chi_D v_{h,2}^*)\, dy dx
\qquad\forall\Vpsi:=(\psi, \psi_1, \psi_2)\in \mathcal{H}.
\end{align*}
Due to the regularity of the cell problems (Proposition \ref{prop:regularity}), Assumption \ref{asspt:polynomialstable} on the stability, and the resulting estimates from Proposition \ref{prop:infsupconst} it holds
\begin{equation}
\label{eq:regz}
\begin{split}
\|\Vz\|_e&\leq 2C_{\stab, e} k^{q+1} \|\Vv_{H, h}\|_e,\\ 
\|z\|_{H^{1+s_0}_{pw}(G)}&\leq 2C_{\reg, 0} k^{q+2}\|\Vv_{H,h}\|_e,\\
\|z_1\|_{L^2(\Om; H^{1+s_1}(Y^*))}\leq C_{\reg, 1} \|\Vz\|_e&\leq 2C_{\reg, 1}C_{\stab, e}k^{q+1}\|\Vv_{H, h}\|_e,\\
\|z_2\|_{L^2(\Om; H^{1+s_2}(D))}\leq C_{\reg, 2}k\|\Vz\|_e&\leq 2C_{\reg,2}C_{\stab, e}k^{q+2}\|\Vv_{H, h}\|_e.
\end{split}
\end{equation} 
Due to \eqref{eq:approxvHh} we can choose $\Vz_{H,h}:=(z_H, z_{h,1}, z_{h,2})\in \VV_{H,h}$ such that
\begin{equation}
\label{eq:approxz}
\begin{split}
\|\Vz-\Vz_{H, h}\|_e&\leq C_{\appr}(H^{s_0}(1+kH)\|z\|_{H^{1+s_0}_{pw}(G)}+h^{s_1}\|z_1\|_{L^2(\Om; H^{1+s_1}(Y^*))}\\
&\qquad\qquad+h^{s_2}(1+kh)\|z_2\|_{L^2(\Om; H^{1+s_2}(D))})\\
&\stackrel{\eqref{eq:regz}}{\leq} 2C_{\appr}\bigl(C_{\reg, 0}k^{q+2}H^{s_0}(1+kH)+C_{\reg, 1}C_{\stab, e}k^{q+1}h^{s_1}\\
&\qquad\qquad\quad+C_{\reg, 2}C_{\stab, e}k^{q+2}h^{s_2}(1+kh)\bigr)\|\Vv_{H, h}\|_e.
\end{split}
\end{equation}
With this $\Vz_{H, h}$ we obtain
\begin{align*}
\Re\CB(\Vv_{H, h}, \Vv_{H, h}+\Vz_{H, h})&=\Re\CB(\Vv_{H, h}, \Vv_{H, h}+\Vz-\Vz+\Vz_{H, h})\\*
&= \Re\CB(\Vv_{H, h}, \Vv_{H, h}+\Vz)-\Re\CB(\Vv_{H, h}, \Vz-\Vz_{H, h})\\*
&\geq C_{\min}\|\Vv_{H, h}\|_e^2-C_B\|\Vv_{H, h}\|_e\, \|\Vz-\Vz_{H, h}\|_e.
\end{align*}
Inserting \eqref{eq:approxz}, we obtain
\begin{align*}
&\!\!\!\!\Re\CB(\Vv_{H, h}, \Vv_{H, h}+\Vz_{H, h})\\
&\geq C_{\min}\Bigl(1-\frac{2C_BC_{\appr}}{C_{\min}}(C_{\reg, 0}k^{q+2}H^{s_0}(1+kH)+C_{\reg, 2}C_{\stab,e}k^{q+2}h^{s_2}(1+kh)\\*
&\hspace{4cm}+C_{\reg, 1}C_{\stab, e}k^{q+1}h^{s_1})\Bigr)\|\Vv_{H, h}\|_e^2.
\end{align*}
Hence, under the resolution conditions \eqref{eq:resolutionasspt}, this gives $\Re\CB(\Vv_{H, h}, \Vv_{H, h}+\Vz_{H, h})\geq \halb C_{\min}\|\Vv_{H, h}\|_e^2$. Finally, observing that
\begin{align*}
&\!\!\!\!\|\Vv_{H, h}+\Vz_{H, h}\|_e\leq \|\Vv_{H, h}\|_e+\|\Vz\|_e+\|\Vz-\Vz_{H, h}\|_e\\*
&\leq \bigl(1+2C_{\stab, e}k^{q+1}+2C_{\appr}(C_{\reg, 0}k^{q+2}H^{s_0}(1+kH)+C_{\reg, 1}C_{\stab, e}k^{q+1}h^{s_1}\\*
&\hspace{5cm}+C_{\reg, 2}C_{\stab, e}k^{q+2}h^{s_2}(1+kh))\bigr)\|\Vv_{H, h}\|_e\\
&\stackrel{\eqref{eq:resolutionasspt}}{\leq}\Bigl(1+2C_{\stab, e}k^{q+1}+\frac{C_{\min}}{2C_B}\Bigr)\|\Vv_{H,h}\|_e\\
&\leq\Bigl(k_0^{-(q+1)}\Bigl(1+\frac{C_{\min}}{2C_B}\Bigr)+2C_{\stab, e}\Bigr)k^{q+1}\|\Vv_{H,h}\|_e
\end{align*}
finishes the proof of the inf-sup condition.

\smallskip
\noindent
{\itshape Proof of the quasi-optimality \eqref{eq:hmmapriori}:}
Consider the following (auxiliary) dual problem for $\Vz:=(z, z_1, z_2)\in \mathcal{H}$
\begin{align*}
\CB(\Vpsi, \Vz) &= k^2\int_G\int_Y(\psi+\chi_D\psi_2)(e_0^*+\chi_D e_2^*)\, dydx
 \qquad \forall\Vpsi:=(\psi, \psi_1, \psi_2)\in \mathcal{H}.
\end{align*}
As already argued in the proof of the discrete inf-sup constant, $z \in H^{1+s_0}_{pw}(G)$ fulfills the estimate $\|z\|_{H^{1+s_0}_{pw}}\leq  C_{\reg, 0}k^{q+2}\|(e_0, e_1, e_2)\|_e$ due to Proposition \ref{prop:infsupconst}. For all $\Vz_{H,h}\in \VV_{H,h}$, the standard Galerkin orthogonality gives
\[k^2\|e_0+\chi_D e_2\|^2_{L^2(G\times Y)}=\CB(\Ve, \Vz)=\CB(\Ve, \Vz-\Vz_{H,h}).\]
The continuity of $\CB$ w.r.t.\ the energy norm and an approximation argument like \eqref{eq:approxz} yield
\begin{align*}
k^2\|e+\chi_D e_2\|^2_{L^2(G\times Y)}
&\leq C_B\|(e_0, e_1, e_2)\|_e\, \|\Vz-\Vz_{H,h}\|_e\\
&\leq C_BC_{\appr}\bigl(C_{\reg, 0}k^{q+2}H^{s_0}(1+kH)+C_{\reg, 1}C_{\stab, e}k^{q+1}h^{s_1}\\*
&\qquad\qquad\qquad+C_{\reg, 2}C_{\stab, e}k^{q+2}h^{s_2}(1+kh)\bigr)\|(e_0, e_1, e_2)\|^2_e.
\end{align*}
With the G{\aa}rding inequality, we get for any $\Vz_{H,h}\in \VV_{H,h}$
\begin{align*}
\|(e_0, e_1, e_2)\|_e ^2&\leq C_{\min}^{-1}\bigl(\Re\CB(\Ve, \Ve) +2k^2\|e_0+\chi_D e_2\|^2_{L^2(G\times Y)}\bigr)\\
&=\Re\bigl(\CB(\Ve, \Vu- \Vz_{H, h})+2k^2\|e_0+\chi_D e_2\|^2_{L^2(G\times Y)}\bigr)\\
&\leq \frac{C_B}{C_{\min}}\|\Vu-\Vz_{H, h}\|_e\, \|(e_0, e_1, e_2)\|_e\\*
&\quad+ \frac{2C_BC_{\appr}}{C_{\min}}\bigl(C_{\reg, 0}k^{q+2}H^{s_0}(1+kH)+C_{\reg, 1}C_{\stab, e}k^{q+1}h^{s_1}\\*
&\hspace{2.5cm}+C_{\reg, 2}C_{\stab, e}k^{q+2}h^{s_2}(1+kh)\bigr) \|(e_0, e_1, e_2)\|_e ^2.
\end{align*}
Together with the resolution conditions \eqref{eq:resolutionasspt} this gives
\[\|(e_0, e_1, e_2)\|_e^2\leq \frac{C_B}{C_{\min}}\|\Vu-\Vz_{H,h}\|_e\|(e_0, e_1, e_2)\|+\halb \|(e_0, e_1, e_2)\|_e^2\]
and hence the first inequality of \eqref{eq:hmmapriori}. The second inequality directly follows from the approximation properties \eqref{eq:approxvHh} and the regularity estimates from  Propositions \ref{prop:regularity} and \ref{prop:infsupconst}.
\end{proof}

\section{Numerical experiment}
\label{sec:numexperiment}
In this section we analyze the HMM numerically with particular respect to the convergence order (see Theorem \ref{thm:infsupdiscr}), the resolution condition (see \eqref{eq:resolutionasspt}) and the behavior of solutions for different wavenumbers $k$ and different values of $\mu_{\eff}$. 
The implementation was done with the module \textsf{dune-gdt} \cite{wwwdunegdt} of the DUNE software framework \cite{MR2421580,MR2421579}.

We consider the macroscopic domain $G=(0.25, 0.75)^2$ with embedded scatterer \linebreak[4]$\Omega=(0.375, 0.625)^2$. 
The boundary condition $g$ is computed as $g=\nabla u_{inc}\cdot n-iku_{inc}$ from the (left-going) incoming plane wave $u_{inc}=\exp(-ikx_1)$.
The unit square $Y$ has the inclusion $D=(0.25, 0.75)^2$ and the inverse permittivities are given as $\varepsilon_e^{-1}=10$ and $\varepsilon_i^{-1}=10-0.01i$.
Obviously, the real parts of both parameters are of the same order, and, moreover, $\varepsilon_i$ is only slightly dissipative.

\begin{figure}
\begin{center}
\begin{tikzpicture}
\begin{axis}[axis equal image=false, legend entries={$\Re(\mu_{\eff})$, $\Im(\mu_{\eff})$}, xmin=15, xmax=68, ymin=-25, ymax=25, legend style={at={(0.5, 1.0)}, anchor=north}]
\addplot+[no markers, densely dotted, thick] table[x=k, y= Re(mu), col sep=comma]{includes/mueff_ima001.csv};
\addplot+[no markers, densely dashed, thick] table[x=k, y= Im(mu), col sep=comma]{includes/mueff_ima001.csv};
\end{axis}
\end{tikzpicture}
\end{center}
\caption{Real and imaginary part of $\mu_{\eff}$ for changing wavenumber $k$.}
\label{fig:mueff}
\end{figure}
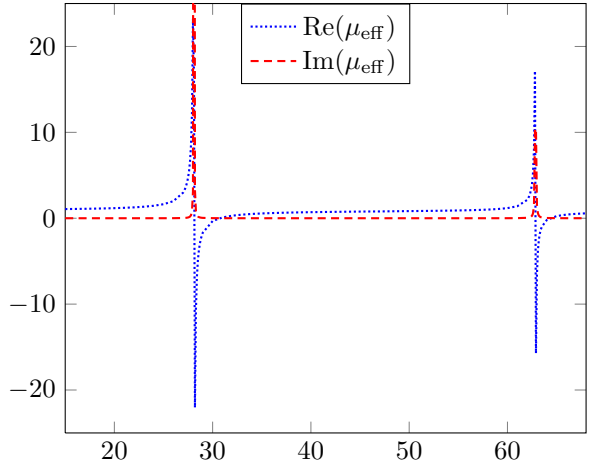

As the inclusion $D$ is quadratic, the eigenvalues of the Dirichlet Laplacian are explicitly known.
Only the eigenvalues where the associated eigenfunctions have non-zero mean contribute to the expansion of $\mu_{\eff}$.
For our setup, the first interesting values are at $k\approx 28.1$ and $k\approx 62.8$.
We compute $\mu_{\eff}$ using cell problem \eqref{eq:cellmu} with a grid consisting of $32768$ elements on $D$.
Figure \ref{fig:mueff} shows the behavior of the real and the imaginary part. 
As predicted, we can see a significant change of behavior around the Laplace eigenvalues, where the real part changes sign and also the imaginary part has large values.
Note that for this example, we do not see a dependence of $\Im(\mu_{\eff})$ like $k^{-2}$, as proved in Proposition \ref{prop:effective}.

\begin{figure}
\begin{center}
\begin{tikzpicture}
\begin{loglogaxis}[axis equal image=false, legend entries={$k=34$, $k=48$, $k=68$}]
\addplot table[x=num_entities, y=1khom, col sep=comma, row sep=\\]{
num_entities, 1khom\\
128, 15.7872178524\\
288, 9.8244204247\\
512, 6.8234878455\\
1152, 3.9859883145\\
2048, 2.7581152361\\
4608, 1.6847272363\\
8192, 1.2137186425\\
18432, 0.790574319\\
};
\addplot table[x=num_entities, y=1khom, col sep=comma, row sep=\\]{
num_entities, 1khom\\
128, 29.6553586362\\
288, 24.3573872912\\
512, 18.64780506\\
1152, 11.2642997948\\
2048, 7.5231704867\\
4608, 4.1700970319\\
8192, 2.8196776464\\
18432, 1.6879761326\\
};
\addplot+[mark=diamond*] table[x=num_entities, y=1khom, col sep=comma, row sep=\\]{
num_entities, 1khom\\
128, 44.4068489542\\
512, 42.7011355978\\
1152, 37.6854385529\\
2048, 25.9186036203\\
4608, 13.9224194377\\
8192, 8.765414034\\
18432, 4.5290776394\\
};
\end{loglogaxis}
\end{tikzpicture}
\end{center}
\caption{Error between homogenized reference solution and macroscopic part $u_H$ of the HMM approximation in weighted $H^1$-norm vs.\ number of grid entities for different wavenumbers $k$.}
\label{fig:resolcond}
\end{figure}
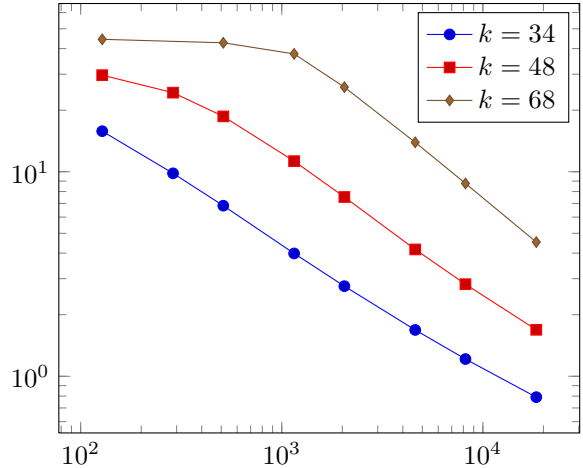

In order to analyze the resolution condition, we use a reference homogenized solution by computing the effective parameters with $524288$ entities on $Y$ and then solving the effective homogenized equation on $G$ with the same number of entities.
We compare the macroscopic part $u_H$ of our HMM approximation with this reference solution in the weighted $H^1$-norm $\|\cdot\|_{1,k,G}$ for a sequence of simultaneously refined macro- and finescale meshes and three different wavenumbers $k=34$, $k=48$, $k=68$, see Figure \ref{fig:resolcond}.
Note that these wavenumbers are all away from any resonant behavior of $\mu_{\eff}$.
For higher wavenumbers, finer meshes are needed to obtain convergence: Whereas for $k=34$, the error convergences for all considered grids, the threshold value for $k=68\approx \sqrt{2}\times 34$ is $288$ entities; and for $k=68=2\times 34$, it is $1152$ entities.
This indicates a resolution condition of ``$k^2(H+h)$ small'' in practice, which is standard for continuous Galerkin discretizations of Helmholtz problems.

\begin{table}
\caption{Convergence history and EOC for the error between the macroscopic part $u_H$ of the HMM approximation and the reference homogenized solution in $L^2$-norm and $k$-weighted $H^1$-norm.}
\label{tab:errorhomk29}
\centering
\begin{tabular}{@{}ccccc@{}}
\toprule
$H=2h$&$\|e_0\|_{L^2(G)}$&$\|e_0\|_{1, k, G}$&EOC($\|e_0\|_{L^2}$)&EOC($\|e_0\|_{1,k}$)\\
\midrule
$\sqrt{2}\times 1/8$ & $0.270474$ & $11.7804630632$ & ---& ---\\
$\sqrt{2}\times 1/12$ & $0.197617$ & $8.9454269415$ & $0.7740374081$ & $0.678973445$\\
$\sqrt{2}\times 1/16$ & $0.110372$ & $5.373206314$ & $2.0247154456$ & $1.7718088298$\\
$\sqrt{2}\times 1/24$ & $0.0513966$ & $2.9702496635$ & $1.2792537865$ & $1.4619724025$\\
$\sqrt{2}\times 1/32$ & $0.0296714$ & $2.0192725797$ & $1.9097067775$ & $1.3414415096$\\
$\sqrt{2}\times 1/48$ & $0.0135056$ & $1.2358350102$ & $1.9411761676$ & $1.2109315066$\\
$\sqrt{2}\times 1/64$ & $0.00767201$ & $0.8863106904$ & $1.9658012347$ & $1.1555624022$\\
\bottomrule
\end{tabular}
\end{table}
\begin{table}
\caption{$L^2(G)$-norm of the error to the reference heterogeneous solution for macroscopic part $u_H$ and zeroth order reconstruction $u_{\HMM}^0$.}
\label{tab:errorcorreck29}
\centering
\begin{tabular}{@{}cccc@{}}
\toprule
$H=2h$&$\|u_\delta-u_H\|_{L^2(G)}$&$\|u_\delta-u_{\HMM}^0\|_{L^2(G)}$&EOC($u_\delta-u_{\HMM}^0$)\\
\midrule
$\sqrt{2}\times 1/8$ & $0.418463$ & $0.565853$ & ---\\
$\sqrt{2}\times 1/16$ & $0.351655$ & $0.174724$ & $1.695349522$\\
$\sqrt{2}\times 1/24$ & $0.34595$ & $0.0619639$ & $2.5567073663$\\
$\sqrt{2}\times 1/32$ & $0.346266$ & $0.0340908$ & $2.0770303799$\\
$\sqrt{2}\times 1/48$ & $0.34733$ & $0.0272449$ & $0.5528495573$\\
$\sqrt{2}\times 1/64$ & $0.347862$ & $0.0297642$ & $-0.3074226373$\\
\bottomrule
\end{tabular}
\end{table}

We now take a closer look at the convergence of the errors and verify the predictions of Theorem \ref{thm:infsupdiscr}.
We choose the wavenumber $k=29$, which corresponds to $\Re(\mu_{\eff})<0$ and thus is also interesting from a physical point of view.
Table \ref{tab:errorhomk29} shows the error between the macroscopic part $u_H$ of the HMM approximation and the reference homogenized solution (as before) in the $k$-weighted $H^1(G)$-norm and the $L^2(G)$-norm.
The experimental order of convergence (EOC), defined as  EOC($e):=\ln(\frac{e_{H_1}}{e_{H_2}})/\ln(\frac{H_1}{H_2})$, verifies the linear convergence in the $H^1$-norm predicted theoretically in Theorem \ref{thm:infsupdiscr}, and the quadratic convergence in the $L^2$-norm discussed afterwards.
This clearly shows that our general theory holds for all regimes of wavenumbers even if they result in unusual effective parameters.
However, we observe a small pre-asymptotic effect for coarse meshes, which  indicates that the resolution condition may be stricter for those resonant settings.
Furthermore, we compare the HMM approximation with a detailed reference solution of the heterogeneous problem for $\delta=1/32$, solved on a fine grid with $524288$ entities.
Table \ref{tab:errorcorreck29} compares the error to the reference solution for the macroscopic part $u_H$ of the HMM approximation and  for the zeroth order $L^2$-approximation $u_{\HMM}^0=u_H+\delta u_{h,2}(\cdot, \frac{\cdot}{\delta})$.
Whereas the error stagnates for $u_H$, we almost recover the quadratic convergence for $u_{\HMM}^0$ with a saturation effect for fine meshes where we enter the regime of the homogenization error.
This clearly underlines the necessity of the correctors in the HMM to faithfully approximate the true solution.
Note that we do not have results on the homogenization error: We expect strong convergence of $u_\delta$ to $u_{\HMM}^0$ in the $L^2$-norm according to \cite{All92twosc}, but the proof is not applicable to the Helmholtz case.

\begin{figure}
\centering
\subfloat[$u_H$]{\includegraphics[width=0.47\textwidth, trim= 31mm 53mm 1mm 53mm, clip=true]{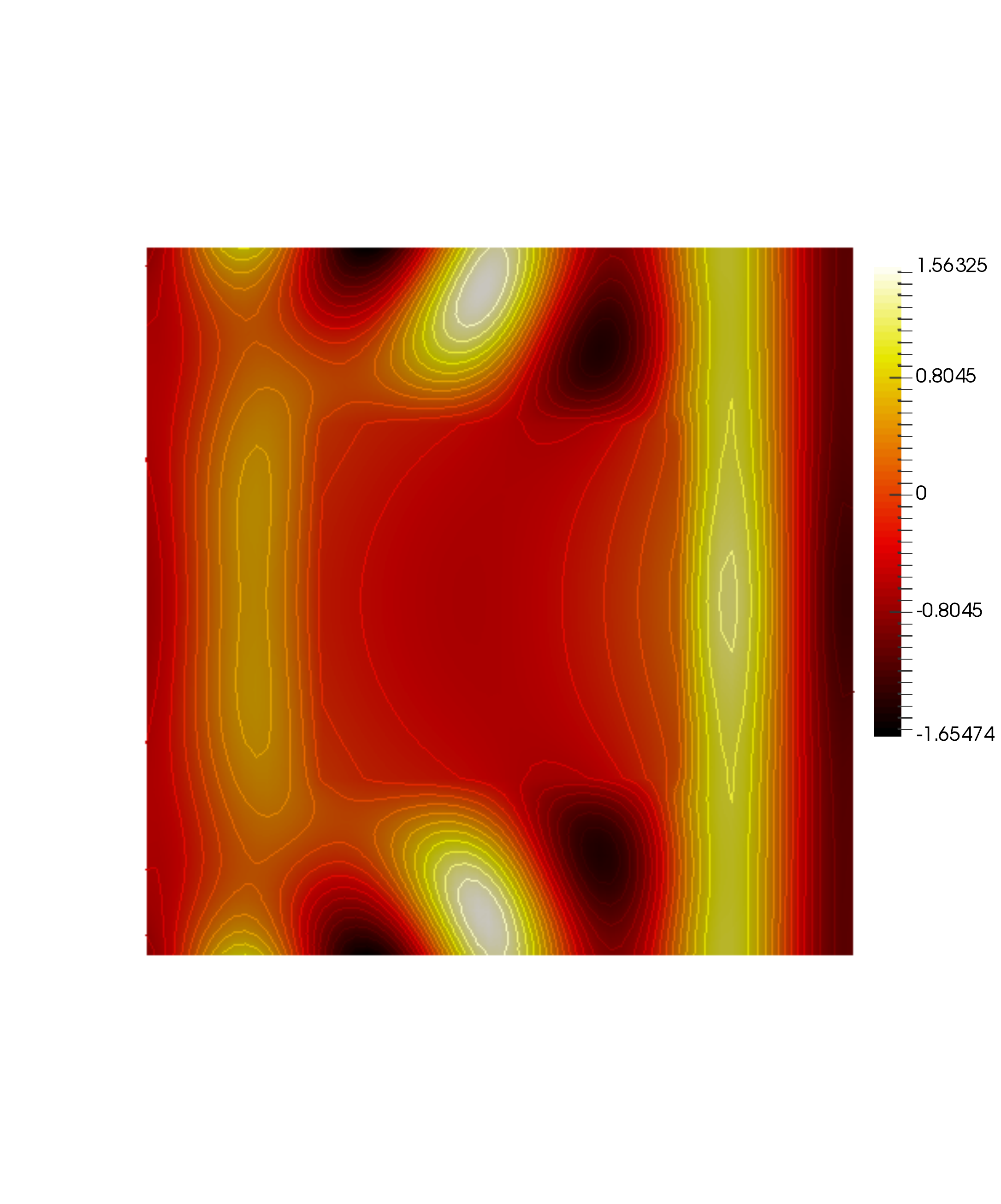}
\label{subfig:homk38}
}%
\hspace{10pt}%
\subfloat[line plot of $u_H$]{\includegraphics[width=0.43\textwidth]{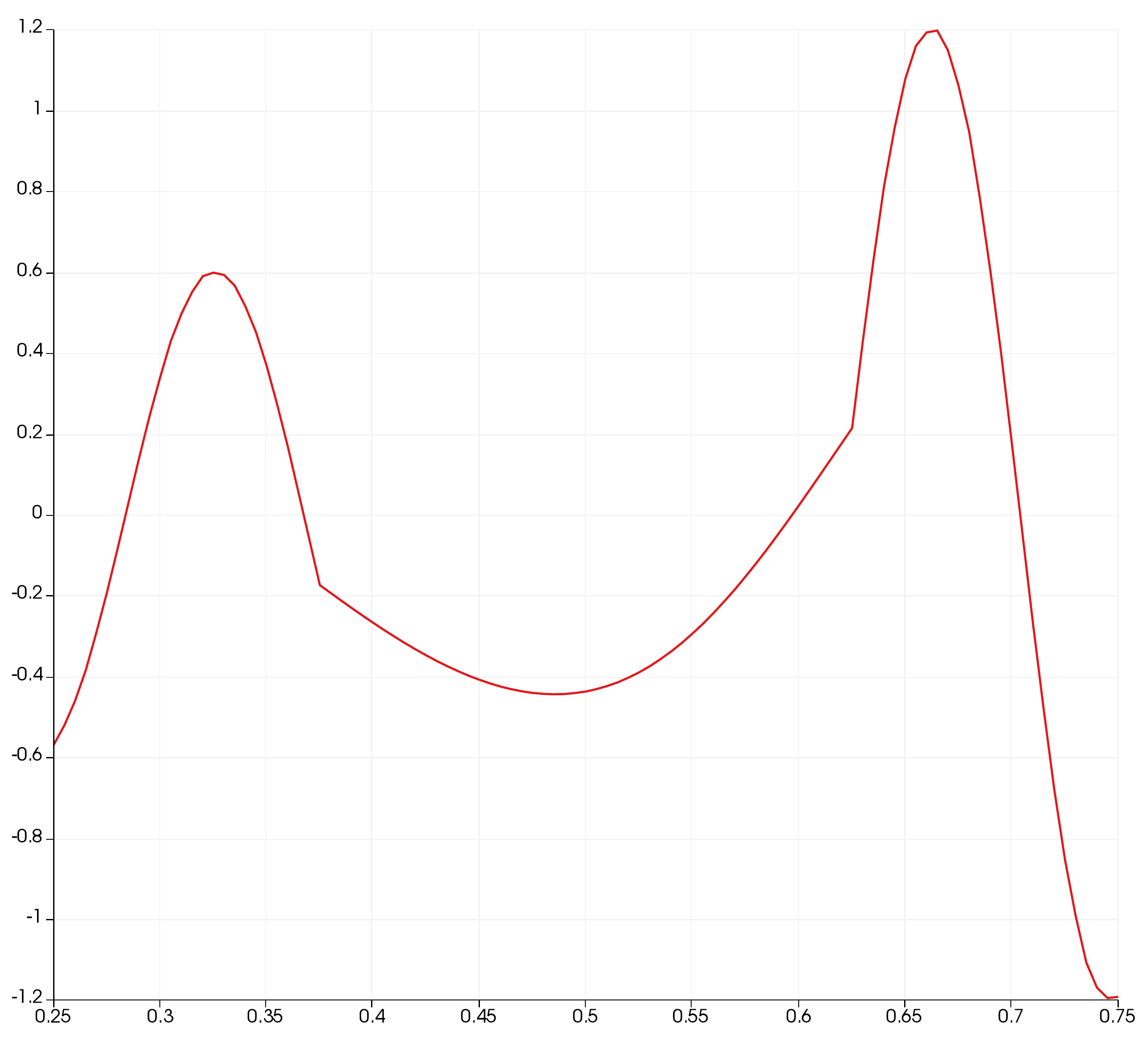}
\label{subfig:homk38line}
}
\\
\subfloat[$u_{\HMM}^0$]{\includegraphics[width=0.48\textwidth, trim= 31mm 53mm 1mm 53mm, clip=true]{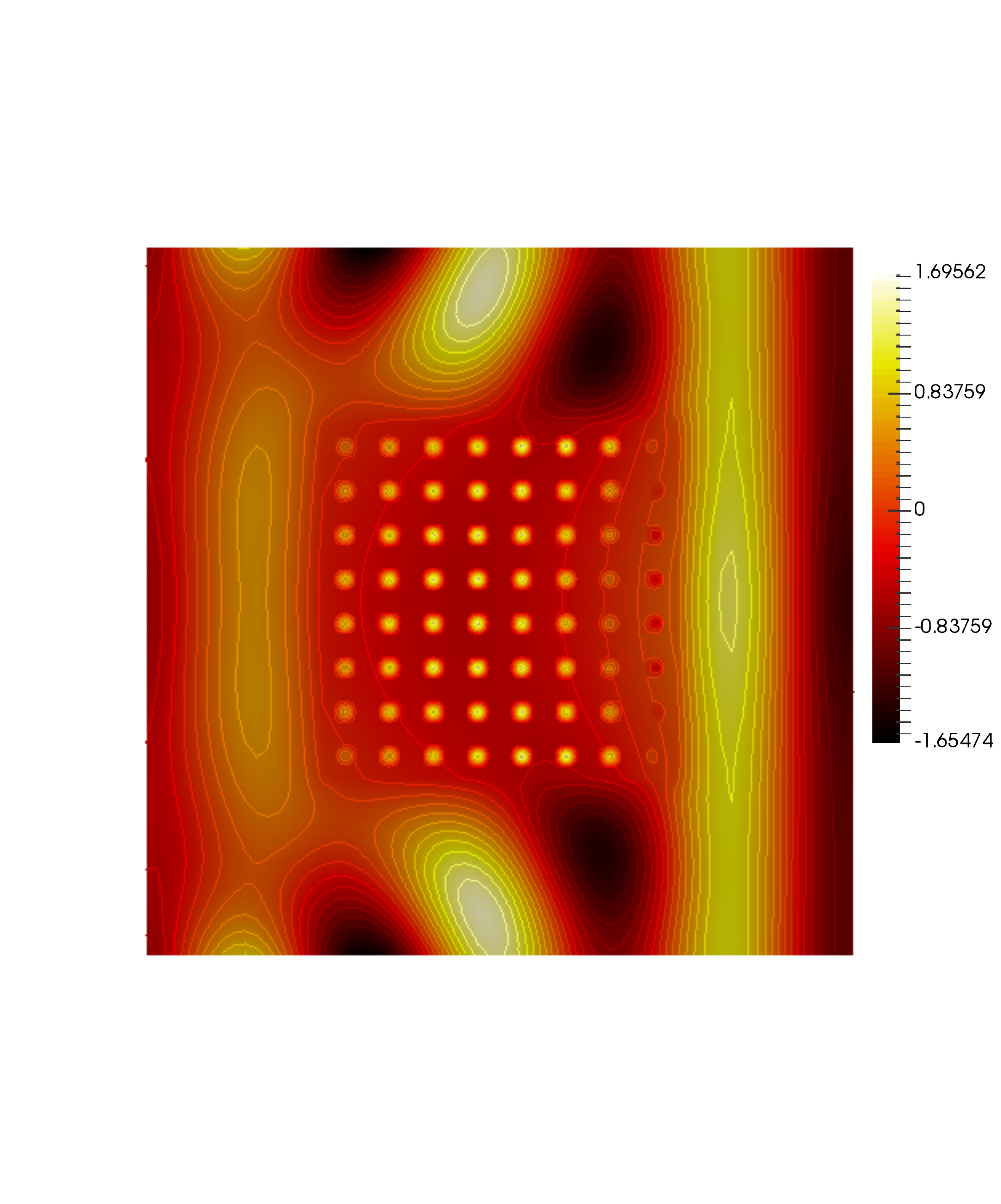}
\label{subfig:correck38}
}%
\hspace{10pt}%
\subfloat[line plot of $u_{\HMM}^0$]{\includegraphics[width=0.42\textwidth]{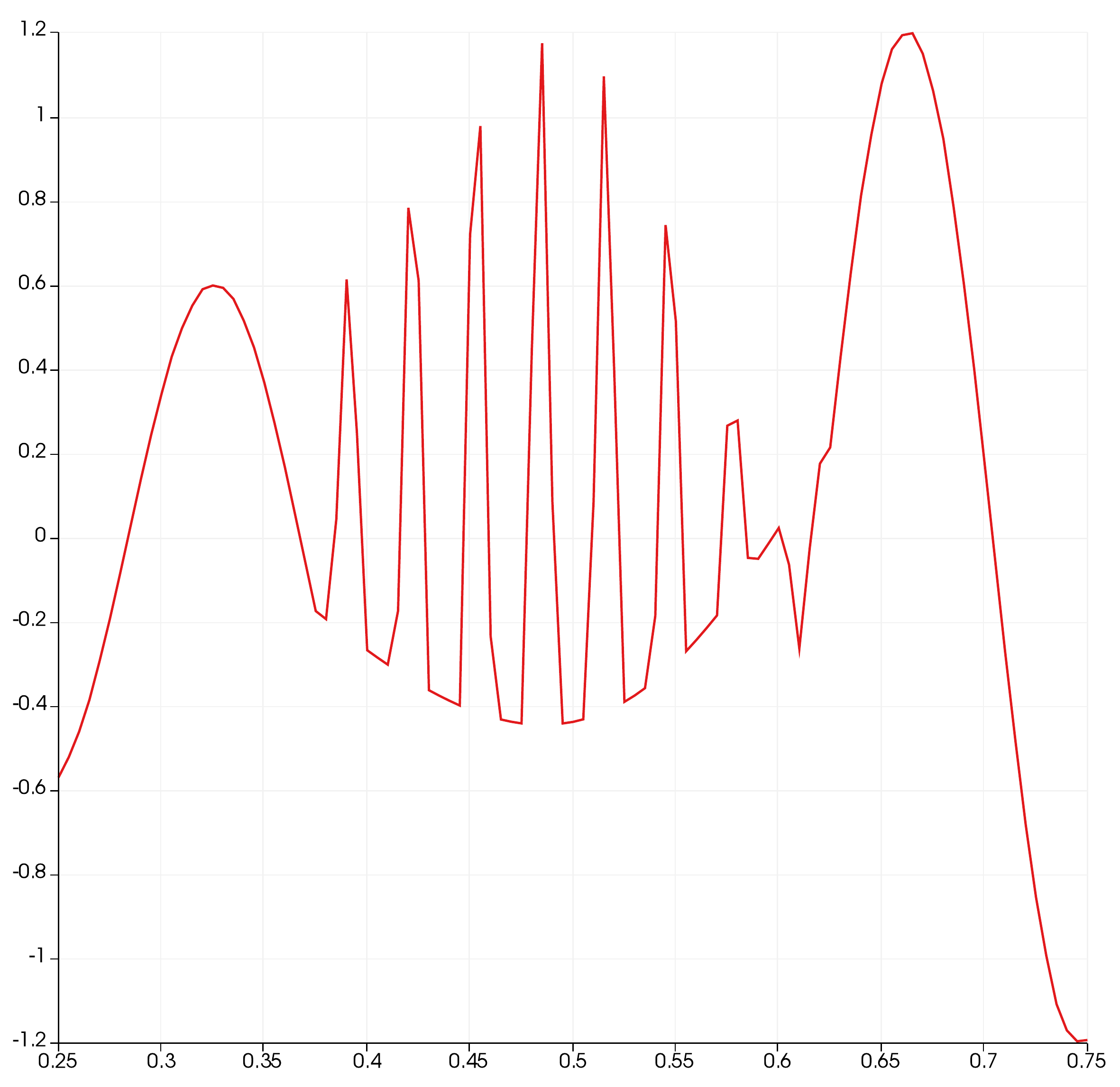}
\label{subfig:correck38line}
}
\caption{For $k=38$: Real part of the macroscopic part $u_H$ and real part of zeroth order reconstruction $u_{\HMM}^0$, both on the whole domain (left column) and over the line $y=0.545$ (right column). Computed with $H=2h=\sqrt{2}\times 1/64$; $u_H$ visualized on that grid, $u_{\HMM}^0$ on fine reference mesh.}
\end{figure}

\begin{figure}
\centering
\subfloat[$u_H$]{\includegraphics[width=0.53\textwidth, trim= 32mm 53mm 0mm 52mm, clip=true]{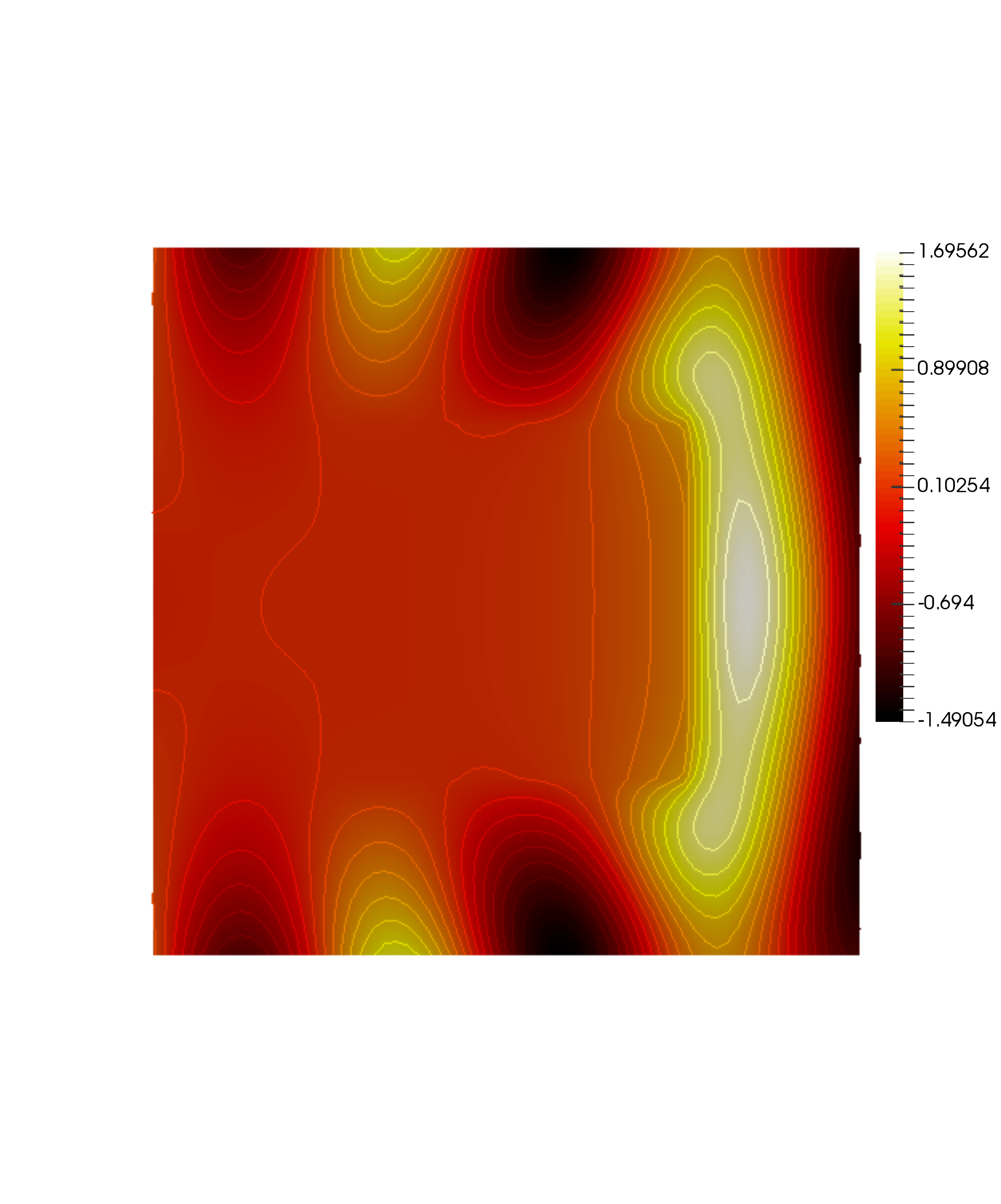}
\label{subfig:homk29}
}%
\hspace{10pt}%
\subfloat[line plot of $u_H$]{\includegraphics[width=0.37\textwidth]{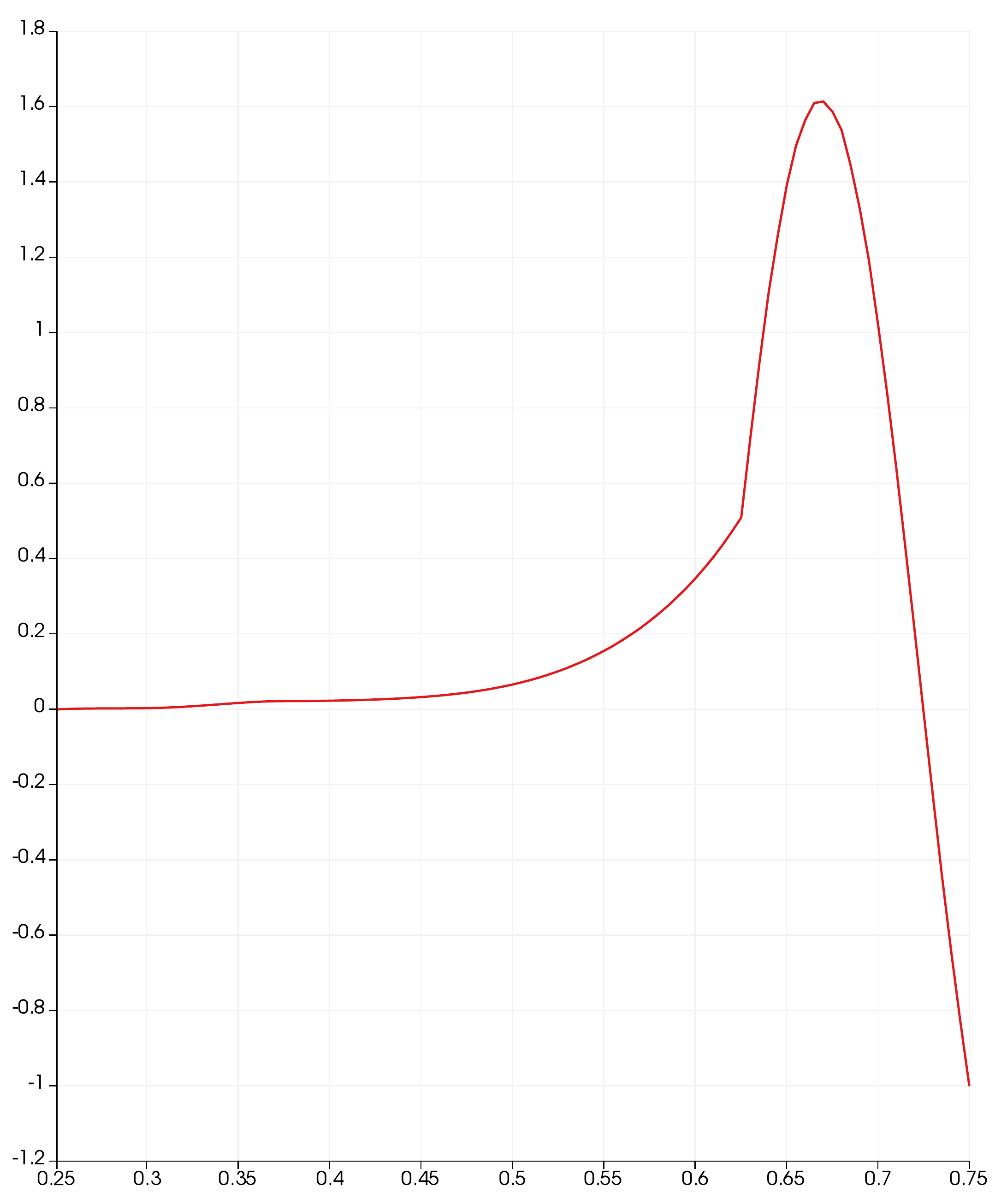}
\label{subfig:homk29line}
}
\\
\subfloat[$u_{\HMM}^0$]{\includegraphics[width=0.53\textwidth, trim= 30mm 72mm 2mm 72mm, clip=true]{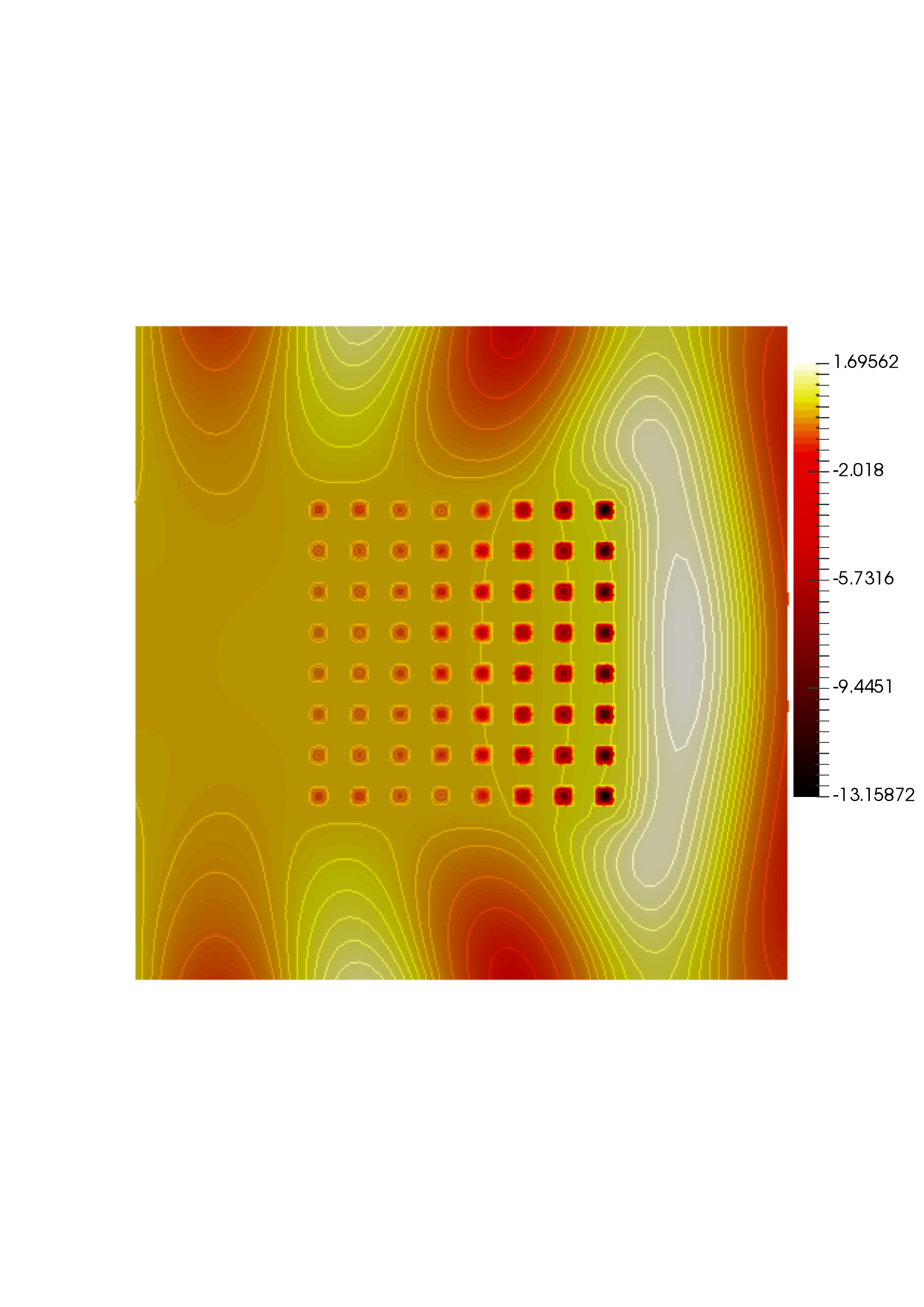}
\label{subfig:correck29}
}%
\hspace{10pt}%
\subfloat[line plot of $u_{\HMM}^0$]{\includegraphics[width=0.37\textwidth]{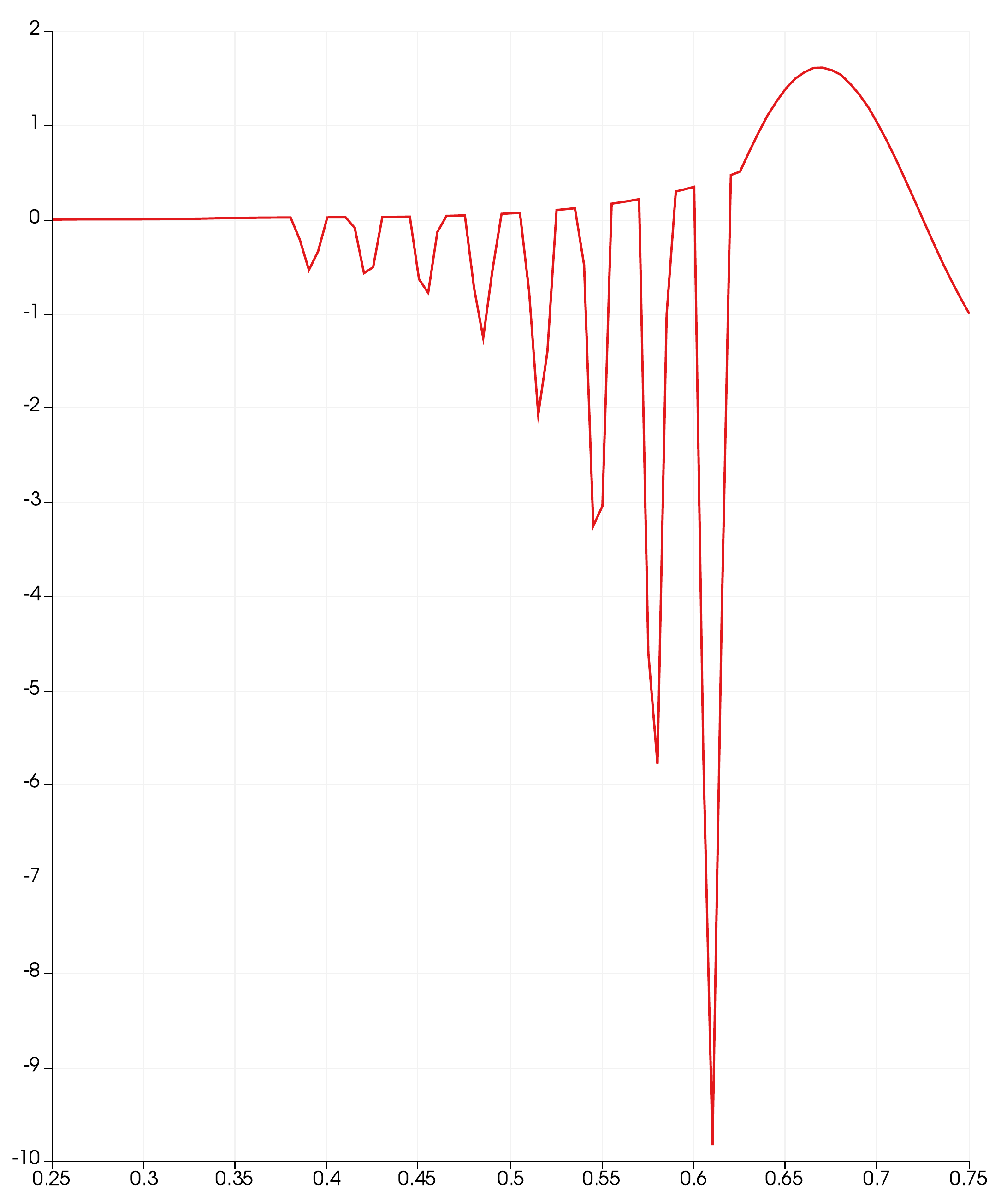}
\label{subfig:correck29line}
}
\caption{For $k=29$: Real part of the macroscopic part $u_H$ and real part of zeroth order reconstruction $u_{\HMM}^0$, both on the whole domain (left column) and over the line $y=0.545$ (right column). Computed with $H=2h=\sqrt{2}\times 1/64$; $u_H$ visualized on that grid, $u_{\HMM}^0$ on fine reference mesh.}
\end{figure}

Finally, we compare two wavenumbers with very different physical meaning: $k=38$ corresponds to normal transmission, whereas $k=29$ has $\Re(\mu_{\eff})<0$ and thus corresponds to a wavenumber in the band gap where propagation inside the scatterer is forbidden.
We consider the macroscopic part $u_H$ of the HMM approximation (with $H=2h=\sqrt{2}\times 1/64$) and the zeroth order reconstruction $u_{\HMM}^0$ (plotted on a well resolved mesh with $524288$ entities) and depict both functions on the whole two-dimensional domain as well as over the line $y=0.545$, which cuts through a row of inclusions.
For $k=38$, wave propagation with low speed takes place inside the scatterer, see the macroscopic part $u_H$ depicted in Figure \ref{subfig:homk38} and \ref{subfig:homk38line}.
In contrast to that, we see the expected exponential decay of the wave inside the scatterer for $k=29$, see the macroscopic part $u_H$ depicted in Figure \ref{subfig:homk29} and \ref{subfig:homk29line}.
The zeroth order reconstruction $u_{\HMM}^0$ can explain this behavior by approximating the heterogeneous solution also inside the inclusion.
For $k=38$, the amplitudes inside the inclusions are as high as the amplitude of the incoming wave, see Figure \ref{subfig:correck38} and \ref{subfig:correck38line}.
However, we observe very high amplitudes inside the inclusions for $k=29$, see Figures \ref{subfig:correck29} and \ref{subfig:correck29line}.
These are caused by eigen resonances incited inside the inclusions.
Moreover, these incited waves from neighboring inclusions interfere destructively with each other so that over the whole scatterer, no wave can propagate.

\section*{Conclusion}
We suggested a new Heterogeneous Multiscale Method (HMM) for the Helmholtz equation with high contrast.  
The stability and regularity of the associated analytical two-scale solution is rigorously analyzed and thereby, a new stability estimate for Helmholtz equations with piecewise constant coefficients is developed.
The HMM is defined as direct finite element discretization of the two-scale equation, which is crucial for the numerical analysis. 
Quasi-optimality of the HMM under the (unavoidable) resolution condition ``$k^{q+2}(H+h)$ is sufficiently small'' is proved, where $q$ denotes the exponent for $k$ in the stability estimate. 
Numerical experiments verify the developed convergence results and analyze the resolution condition.
Moreover, the approximation to the heterogeneous solution, obtained from the HMM,  explains the effect of evanescent waves in frequency band gaps as destructive interference of eigen resonant waves inside the inclusions.

\section*{Acknowledgement}
The authors would like to thank P.\ Henning, A.\ Lamacz, and B.\ Schweizer for fruitful discussions on the subject.

\end{document}